\newtheorem{theorem}{Theorem}[section]
\newtheorem{lemma}[theorem]{Lemma}
\newtheorem{proposition}[theorem]{Proposition}
\theoremstyle{definition}
\numberwithin{equation}{section}
\begin{document}


\baselineskip=17pt


\title[A stability problem for some complete and minimal Gabor systems in $L^2(\mathbb{R})$\ \ \ ]{A stability problem for some complete and minimal Gabor systems in $L^2(\mathbb{R})$}

\author[Y. Omari]{Youssef Omari}
\address{Laboratory of Mathematical Analysis and Applications (LAMA), CeReMAR, Faculty of Sciences, Mohammed V University in Rabat, 4 Av. Ibn Battouta, Morocco.}
\email{omariysf@gmail.com}

\date{}

\begin{abstract}
A Gabor system in $L^2(\mathbb{R})$, generated by a window $g\in L^2(\mathbb{R})$ and associated with a sequence of times and frequencies $\Gamma\subset\mathbb{R}^2$, is a set formed by translations in time and modulations of $g$. In this paper we consider the case when $g$ is the Gaussian function and $\Gamma$ is a sequence whose associated Gabor system $\mathcal{G}_\Gamma$ is complete and minimal in $L^2(\mathbb{R})$. {We consider two main cases: that of the lattice without one point and that of the sequence constructed by Ascensi, Lyubarskii and Seip lying on the union of the coordinate axes of the time-frequency space. We study the stability problem for these two systems}. {More precisely, we  describe the perturbations of $\Gamma$ such that the associated Gabor systems remain to be complete and minimal}. Our method of proof is based essentially on  estimates of some infinite products.
\end{abstract}

\subjclass[2010]{Primary 30H20; Secondary 32A15, 30D20.}

\keywords{Gabor systems, Bargmann-Fock space, Riesz bases, Gabor frames, Bargmann transform}

\maketitle

\section{Introduction}
Gabor analysis is a method, introduced in \cite{gabor1946theory}, to express arbitrary square integrable functions using  a countable set with minimal support in the time-frequency plane. Gabor systems are sets  formed by translations in time and frequency of a single function. The signals are represented as  series over the Gabor systems. We refer to \cite{grochenig2013foundations,grochenig2016completeness,lyubarskii1999convergence} for more details on this theory.

A \textit{Gabor system} generated by a function $g\in L^2(\mathbb{R})$ and associated with a sequence $\Lambda$ of distinct points in $\mathbb{R}^2$ is given by
\begin{eqnarray*}
\mathcal{G}_{\Lambda} := \Big\{\rho_{(x,y)}g(t) := e^{2i\pi yt}g(t-x)\ :\ (x,y)\in\Lambda\Big\}.
\end{eqnarray*}
Throughout this paper $g$ is the Gaussian window $e^{-\pi t^2}$. The element $\rho_{(x,y)}g$ is the time-frequency shift of $g$, with respect to a point $(x,y)$, in the phase space. By definition,
\[\rho_{(x,y)}g(t) = M_yT_xg(t),\quad x,y,t\in\mathbb{R},\]
where $T_x$ and $M_y$ are two {unitary} operators in $L^2(\mathbb{R})$ given by
\[T_x : f\ \longmapsto\ T_xf(t):=f(t-x),\quad M_y : f \longmapsto\ M_y f(t):=e^{2i\pi yt}f(t).\]

It is well-known that $\mathcal{G}_\Lambda$ cannot be a Riesz basis in $ L^2(\mathbb{R})$ (see \cite{borichev2017geometric,brekke1993density,seip1992density,seip1992densityy}). However, there are numerous complete and minimal Gabor systems. A typical example is the lattice without one point, $\Gamma_1:=\mathbb{Z}\times\mathbb{Z}\setminus\{(0,0)\}$, (see \cite[Theorem 5.9]{zhu2012analysis}). More generally, for every $\nu\in(0,1]$ the system $\mathcal{G}_{\Gamma_\nu}$ is complete and minimal (see \cite[Theorem 2]{lyubarskii1992frames}), where $\Gamma_\nu$ is given by
\begin{equation}\nonumber
\Gamma_\nu:=\Big(\mathbb{Z}\times\left(\mathbb{Z}\setminus\{0\}\right)\Big)\cup\Big(\mathbb{Z}_{-}\times\{0\}\Big)\cup\Big\{(m+\nu,0)\Big\}_{m\geq 0}.
\end{equation}
It is easy to see that the uniform Beurling density of these sequences is 1 and they are separated. As a consequence, $\mathcal{G}_{\Gamma_\nu}$ is neither a Gabor frame nor a Riesz sequence in $ L^2(\mathbb{R})$ (see \cite{seip1992density,seip1992densityy}). Another interesting  example of a complete and minimal Gabor system in $ L^2(\mathbb{R})$ was provided by Ascensi, Lyubarskii and Seip in \cite{ascensi2009phase}. This system is associated with a non separated sequence {lying} on the coordinate axes of the plane $\mathbb{R}^2$.  It turns out that the lower Beurling density of this sequence is zero, hence the associated Gabor system is {very far from being a Gabor frame or a Riesz sequence in $ L^2(\mathbb{R})$}.  The sequence is given by
\begin{equation}\nonumber
\Gamma := \left\{(\pm\sqrt{2n},0),\ (0,\pm\sqrt{2n})\ :\quad n\geq 1\right\}\cup\{(\pm 1,0)\}.
\end{equation}

Later on, Belov proved in \cite{belov2015uniqueness}  that if a Gabor system $\mathcal{G}_\Lambda$ is complete and minimal in $ L^2(\mathbb{R})$, then its unique biorthogonal system, noted $\big\{h_{(x,y)}\ :\ (x,y)\in\Lambda \big\}$, is also complete and minimal in $ L^2(\mathbb{R})$. This means that every complete and minimal Gabor system constitutes a \textit{Markushevich basis} or simply an {\it M-basis} in $ L^2(\mathbb{R})$. Thus, every function in $ L^2(\mathbb{R})$ (or, equivalently, by the Bargmann transform in the Fock space) is uniquely determined by its Fourier coefficients with respect to such Gabor system. We write
\begin{eqnarray}\nonumber
f \sim \sum_{(x,y)\in\Lambda} \langle f,h_{(x,y)}\rangle\ \rho_{(x,y)}g,\quad f\in { L}^{2}(\mathbb{R}).
\end{eqnarray}
For the convergence of such series we refer to the works by Lyubarskii and Seip in \cite{lyubarskii1999convergence} and by Dumont and Kellay in \cite{dumont2012convergence}. 
Throughout this paper, we identify $\mathbb{R}^2$ with the complex plane $\mathbb{C}$ and we write
{\small\begin{equation}\label{lambda_nu}
\Gamma_\nu=\left\{\gamma_{m,n}:=m+in\ :\  (m,n)\in\mathcal{I}_\nu\right\}  =\{\gamma\}_{\gamma\in\Gamma_\nu},
\end{equation}}
where  $\mathcal{I}_\nu=\Big(\mathbb{Z}\times\big(\mathbb{Z}\setminus\{0\}\big)\Big)\cup\Big(\mathbb{Z}_{-}\times\{0\}\Big)\cup\Big\{(m'+\nu,0)\Big\}_{m'\geq 0}$ and 
\begin{equation}\label{ze}
\Gamma=\left\{\pm\sqrt{2n},\ \pm i\sqrt{2n}\ :\ n\geq 1\right\}\cup\{\pm 1\}=\{\gamma\}_{\gamma\in\Gamma}.
\end{equation}

A natural question in the context of the time-frequency analysis is the study of complete and minimal Gabor systems. The question we deal with in the present paper is, {for a given $\Sigma$ where $\Sigma=\Gamma$ or $\Gamma_\nu$,} to characterize  the multiplicatively perturbed systems $\mathcal{G}_\Lambda$ that remain to be complete and minimal; here
$$\Lambda:=\left\{\lambda_\sigma:=\sigma e^{\delta_\sigma}e^{i\theta_\sigma}\ :\ \sigma\in\Sigma\right\}.$$
For different problems concerning spectral synthesis in the Fock type spaces see  \cite{baranov2015spectral,baranov2016completness} and the references therein.
\subsection{Finite strip perturbation of $\Gamma_\nu$}
To simplify the statements, we begin by dealing with  perturbations on a finite strip, the general case will be presented in the next subsection. Our first result in such context is the following 
\begin{theorem}\label{corA}
Let $N$ be a positive integer and let $\Lambda_N:=\{\lambda_{m,n}\ :\ m\in\mathbb{Z},\ 1\leq n\leq N\}$
be a sequence of complex numbers. Let 
$\Lambda:=\{\gamma_{m,n}\in\Gamma_\nu\ :\ (m,n)\in\mathbb{Z}\times\mathbb{Z}\setminus\{1\leq n\leq N\} \}\cup\Lambda_N.$
If  
\begin{eqnarray}\label{cd}
\left|\gamma_{m,n}-\lambda_{m,n}\right|\leq \frac{\delta}{N},\quad (m,n)\in\mathbb{Z}\times\{1,\cdots,N\},
\end{eqnarray}
for some $\delta<\frac{\min\{\nu,1-\nu\}}{2}$, then  the Gabor system $\mathcal{G}_\Lambda$ associated to $\Lambda$ is complete and minimal in $ L^2(\mathbb{R})$.
\end{theorem}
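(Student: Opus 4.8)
The plan is to transport everything to the Bargmann--Fock space $\mathcal{F}$ of entire functions with $\|F\|^2=\int_{\mathbb{C}}|F|^2e^{-\pi|z|^2}\,dA<\infty$. Under the Bargmann transform the Gaussian window becomes a constant and each shift $\rho_{(x,y)}g$ becomes, up to a unimodular constant and normalization, the reproducing kernel $k_w$ at $w=x+iy$; hence $\mathcal{G}_\Lambda$ is complete and minimal in $L^2(\mathbb{R})$ exactly when $\{k_\lambda:\lambda\in\Lambda\}$ is complete and minimal in $\mathcal{F}$. The first property says that $\Lambda$ is a uniqueness set for $\mathcal{F}$; the second says that for each $\lambda_0\in\Lambda$ there is a nonzero $F\in\mathcal{F}$ vanishing on $\Lambda\setminus\{\lambda_0\}$ but not at $\lambda_0$. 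I would encode both through a \emph{generating function}, an entire function whose zero set is exactly $\Lambda$ (simple zeros).

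I would start from the generating function $g_\nu$ of $\Gamma_\nu$, which certifies that $\mathcal{G}_{\Gamma_\nu}$ is complete and minimal: $g_\nu$ vanishes simply on $\Gamma_\nu$, it obeys the sharp two-sided bound $|g_\nu(z)|\asymp \mathrm{dist}(z,\Gamma_\nu)\,e^{\pi|z|^2/2}$ away from the zeros so that $g_\nu\notin\mathcal{F}$ while $g_\nu(z)/(z-\gamma)\in\mathcal{F}$ for every $\gamma$. I then build the perturbed generating function by altering only the $N$ perturbed rows,
\[
G_\Lambda(z)=g_\nu(z)\,R(z),\qquad R(z)=\prod_{n=1}^{N}\prod_{m\in\mathbb{Z}}\frac{z-\lambda_{m,n}}{z-\gamma_{m,n}},
\]
the inner products being taken with the same convergence factors as in $g_\nu$, so that $G_\Lambda$ is entire with zero set exactly $\Lambda$.

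The decisive step is to show that $R$ is a harmless (bounded-type) multiplier. Each row factor is a quotient of two functions of sine type along the horizontal line $\{\mathrm{Im}\,z=n\}$: numerator and denominator share the same exponential type and indicator, so their leading growth cancels and $R_n$ is bounded above and below away from its zeros and poles, with bound governed by the row displacement $\sum_m|\lambda_{m,n}-\gamma_{m,n}|$. Multiplying the $N$ rows, the hypothesis $|\lambda_{m,n}-\gamma_{m,n}|\le\delta/N$ makes the accumulated displacement at most $\delta$ per column, so the two-sided bound on $R$ is uniform in $N$; the threshold $\delta<\tfrac12\min\{\nu,1-\nu\}$ is calibrated to keep this accumulated displacement strictly inside the margin that controls the criticality of $\Gamma_\nu$, which preserves both the separation of the zero set and the uniqueness-set structure. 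Because $R^{\pm1}$ are bounded, $G_\Lambda$ inherits the sharp growth of $g_\nu$ (distance factor included): $G_\Lambda\notin\mathcal{F}$ but $G_\Lambda(z)/(z-\lambda)\in\mathcal{F}$ for every $\lambda\in\Lambda$, which gives minimality together with the explicit biorthogonal functions built from $G_\Lambda(z)/[(z-\lambda)G_\Lambda'(\lambda)]$.

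Completeness I would obtain by transfer: if some nonzero $F\in\mathcal{F}$ vanished on all of $\Lambda$, then $\tilde F:=F/R=F\,g_\nu/G_\Lambda$ is entire (the poles of $1/R$ at the perturbed points $\lambda_{m,n}$ are cancelled by the zeros of $F$), its zero set is precisely $\Gamma_\nu$ (the zeros of $1/R$ reinstate the unperturbed nodes on the strip), and $\|\tilde F\|_{\mathcal{F}}\lesssim\|F\|_{\mathcal{F}}$ because $1/R$ is a bounded multiplier; this contradicts the completeness of $\mathcal{G}_{\Gamma_\nu}$. I expect the sine-type infinite-product estimate of the third paragraph to be the main obstacle: one must control $R$ and $1/R$ uniformly in $z$ --- including on the small exceptional disks around the perturbed zeros --- and must make the constants independent of $N$, since it is precisely the interplay between the $\delta/N$ smallness, the finite number of rows, and the thin criticality margin $\min\{\nu,1-\nu\}$ of $\Gamma_\nu$ that keeps the perturbed system on the knife-edge between completeness and minimality.
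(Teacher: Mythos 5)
Your setup (Bargmann transform, reduction to uniqueness sets of zero excess for $\mathcal{F}$, generating function $G_\Lambda = g_\nu R$ with $R$ a product of row quotients) matches the paper's framework, but the decisive step fails: it is \emph{not} true that $R$ and $1/R$ are bounded away from the zeros and poles. The displacements are not summable along a row --- under \eqref{cd} you only get $|\lambda_{m,n}-\gamma_{m,n}|\le \delta/N$ uniformly in $m$, so $\sum_m|\lambda_{m,n}-\gamma_{m,n}|=\infty$ --- and the cancellation of exponential type leaves behind genuinely unbounded polynomial factors. The model case already shows this: for the single-row shift $\lambda_{m,n}=m+\mathrm{sign}(m)\beta/N+in$, the quotient behaves like $|z|^{-2\beta}$ (Gamma-function asymptotics; this is exactly part $(2)$ of Lemma \ref{lem1}, where the exponent of the generating function moves from $\nu$ to $\nu+2\beta$). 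What is true is that the radial displacements $\delta_{m,n}=\log|\lambda_{m,n}/\gamma_{m,n}|$ have logarithmically divergent partial sums, $\big|\sum_{|m|\le s,\,n}\delta_{m,n}\big|\le 2\delta\log s+O(1)$, and exponentiating this gives two-sided bounds of the form $|z|^{-2\delta}\lesssim |R(z)|\,\mathrm{dist}(z,\Gamma_\nu)/\mathrm{dist}(z,\Lambda)\lesssim |z|^{2\delta}$, further distorted near the real axis by factors $\left(\frac{1+|z|}{1+|\mathrm{Im}\,z|}\right)^{\pm M}$ (the term $M\log\frac{|z|}{1+|\mathrm{Im}\,z|}$ in the paper's estimate \eqref{3.14}). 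Your boundedness claim also cannot be repaired in principle, because it would prove the theorem for \emph{every} $\delta$: if $R^{\pm1}$ were bounded, $G_\Lambda$ would inherit the growth of $g_\nu$ exactly and completeness and minimality would persist under arbitrarily large perturbations of this type. That contradicts the sharpness result (Theorem \ref{thm2}): at $\beta=-\nu/2$, i.e.\ $\delta=\nu/2$, the system is complete but not minimal. The threshold $\delta<\frac12\min\{\nu,1-\nu\}$ can only come out of tracking the polynomial exponent, not out of a bounded-multiplier argument.

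The correct mechanism, which your outline misses, is an integrability dichotomy in the Fock space. One proves (Lemma \ref{lem1}) the two-sided estimate
\begin{equation*}
\frac{(1+|\mathrm{Im}\,z|)^M}{(1+|z|)^{\nu-\hat{\delta}+M}}\,\mathrm{dist}(z,\Lambda)\ \lesssim\ |G_\Lambda(z)|e^{-\frac{\pi}{2}|z|^2}\ \lesssim\ \frac{(1+|z|)^{\delta-\nu+M}}{(1+|\mathrm{Im}\,z|)^M}\,\mathrm{dist}(z,\Lambda),
\end{equation*}
where $\delta,\hat\delta$ are (up to $\varepsilon$) the logarithmic Ces\`aro sums $\delta(\Lambda),\hat\delta(\Lambda)$ of the radial displacements. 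Then (Lemma \ref{gen}) the upper bound with exponent $\nu-\delta(\Lambda)>0$ shows $G_\Lambda(z)/(z-\lambda)\in\mathcal{F}$ for every $\lambda$ (minimality), while the lower bound with exponent $\nu-\hat\delta(\Lambda)\le 1$ forces any $F\in\mathcal{F}$ vanishing on $\Lambda$ to be $G_\Lambda$ times a polynomial whose associated weighted integral diverges, hence $F\equiv 0$ (completeness); your transfer argument ``$F/R\in\mathcal{F}$ contradicts completeness of $\Gamma_\nu$'' is replaced by this division-plus-growth argument precisely because $1/R$ is unbounded. Finally, under \eqref{cd} the sums satisfy $|\delta(\Lambda)|,|\hat\delta(\Lambda)|\le 2\delta$, so $\delta<\frac12\min\{\nu,1-\nu\}$ is exactly what places both exponents in the window $(0,1]$, i.e.\ condition \eqref{delta} of Theorem \ref{thm1}. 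This is where the factor $\frac12$ comes from (summation over $\pm m$ across all $N$ rows), a point your calibration sentence leaves unexplained.
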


When $\nu \leq 1/2$, the condition $\delta<\frac{\min\{\nu,1-\nu\}}{2}$ in Theorem \ref{corA} is optimal. Indeed, for fixed $N\geq 1$, we denote
     $$\Lambda_{\beta,N} :=\left\{m+\mathrm{sign}(m)\frac{\beta}{N}+i n\ :\ m\in\mathbb{Z},\ 1\leq n\leq N\right\},$$
where $\beta$ is a real number and $\mathrm{sign}$ is defined by
$$\mathrm{sign}(m):=\left\{\begin{array}{cc}
+1 & \mbox{if}\ m\geq 1,\\
0  & \mbox{if}\ m=0,\\
-1 & \mbox{if}\ m\leq -1.
\end{array}
\right.
$$
Next consider the sequences
     $$\Lambda_{\beta,\nu}:=\Big\{\gamma_{m,n}\in\Gamma_\nu\ :\ (m,n)\in\mathbb{Z}\times\left(\mathbb{Z}\setminus\{1\leq n\leq N\}\right) \Big\}\cup\Lambda_{\beta,N}.$$
We have $ \left|\gamma_{m,n}-\lambda_{m,n}\right|=\frac{|\beta|}{N}$. If $\beta=-\nu$, then  $\delta=\frac{\nu}{2}$. In this case, we prove that $\mathcal{G}_{\Lambda_{\beta,\nu}}$ is complete but not minimal in ${ L}^2(\mathbb{R})$. More precisely, 
we have the following result.

\begin{theorem}\label{thm2}
Let $0<\nu\leq 1,$ $\beta\in \mathbb{R}$. Then we have 
\begin{enumerate}[label=$\arabic*)$,leftmargin=* ,parsep=0cm,itemsep=0cm,topsep=0cm]
\item If $\beta \leq -\frac{\nu}{2}$, then  $\mathcal{G}_{\Lambda_{\beta,\nu}}$ is complete and not minimal in $ L^2(\mathbb{R})$.
\item If $-\frac{\nu}{2} < \beta\leq \frac{1-\nu}{2}$, then $\mathcal{G}_{\Lambda_{\beta,\nu}}$ is complete and minimal in $ L^2(\mathbb{R}).$
\item If $\beta > \frac{1-\nu}{2}$, then $\mathcal{G}_{\Lambda_{\beta,\nu}}$ is not complete in $ L^2(\mathbb{R})$.
\end{enumerate}
\end{theorem}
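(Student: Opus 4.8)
The plan is to pass to the Bargmann--Fock space $\mathcal{F}$ of entire functions with $\|F\|^2=\int_{\mathbb{C}}|F(z)|^2e^{-\pi|z|^2}\,dA(z)<\infty$, in which the Bargmann transform sends each $\rho_{(x,y)}g$ to a nonzero multiple of the reproducing kernel at $z=x+iy$. Under this identification $\mathcal{G}_{\Lambda_{\beta,\nu}}$ is complete in $L^2(\mathbb{R})$ exactly when $\Lambda_{\beta,\nu}$ is a uniqueness set for $\mathcal{F}$, and it is minimal exactly when, for every $\lambda\in\Lambda_{\beta,\nu}$, some function of $\mathcal{F}$ vanishes on $\Lambda_{\beta,\nu}\setminus\{\lambda\}$ but not at $\lambda$. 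Since these sequences are regular perturbations of the critical lattice, both properties are governed by a single canonical generating function $G=G_{\Lambda_{\beta,\nu}}$ vanishing simply on $\Lambda_{\beta,\nu}$: completeness holds iff $G\notin\mathcal{F}$, while minimality holds iff $G(z)/(z-\lambda)\in\mathcal{F}$ for each $\lambda$. The quantitative input is that a function with $|F(z)|\,e^{-\frac{\pi}{2}|z|^2}\asymp|z|^{-p}$ at infinity lies in $\mathcal{F}$ precisely when $p>1$, since $\int|z|^{-2p}\,dA$ converges iff $p>1$.

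Next I would write $G$ as an explicit product and factor out the perturbation. Comparing $\Gamma_\nu$ with the full lattice, whose generating function is the Weierstrass $\sigma$-function for $\mathbb{Z}+i\mathbb{Z}$ (with $|\sigma(z)|\asymp e^{\frac{\pi}{2}|z|^2}$ in the bulk), the modification of the real axis contributes a factor $\Gamma(-z)/\Gamma(\nu-z)\asymp|z|^{-\nu}$, so $|G_{\Gamma_\nu}(z)|\,e^{-\frac{\pi}{2}|z|^2}\asymp|z|^{-\nu}$, i.e. $p=\nu$; this already recovers that $\mathcal{G}_{\Gamma_\nu}$ is complete and minimal for $0<\nu\le1$. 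Since $\Lambda_{\beta,\nu}$ differs from $\Gamma_\nu$ only on the $N$ rows $1\le n\le N$, I would write $G=G_{\Gamma_\nu}\prod_{n=1}^N R_n$, where $R_n(z)$ is the quotient of the perturbed row product by $\sin\pi(z-in)/\pi$. The perturbed row has zeros at $0$ and at $\pm(k+\beta/N)$, $k\ge1$, so $R_n$ is a ratio of Gamma functions,
\begin{equation*}
R_n(z)=\Gamma\!\Big(1+\tfrac{\beta}{N}\Big)^2\,\frac{\Gamma(1+z-in)\,\Gamma(1-z+in)}{\Gamma(1+\tfrac{\beta}{N}+z-in)\,\Gamma(1+\tfrac{\beta}{N}-z+in)}.
\end{equation*}

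The technical heart is then an estimate of this finite product of Gamma quotients, which is the ``estimate of infinite products'' underlying the whole argument. Using Stirling's asymptotics, each factor satisfies $|R_n(z)|\asymp|z|^{-2\beta/N}$ as $|z|\to\infty$, uniformly away from the finitely many displaced zeros, so $\prod_{n=1}^N R_n(z)\asymp|z|^{-2\beta}$ and hence
\begin{equation*}
|G(z)|\,e^{-\frac{\pi}{2}|z|^2}\asymp|z|^{-(\nu+2\beta)},\qquad\text{i.e. } p=\nu+2\beta .
\end{equation*}
Feeding this into the two criteria yields the trichotomy at once: $G\in\mathcal{F}\iff p>1\iff\beta>\frac{1-\nu}{2}$, giving non-completeness in case~3); $G(z)/(z-\lambda)\in\mathcal{F}\iff p>0\iff\beta>-\frac{\nu}{2}$ for every $\lambda$, so minimality holds precisely for $\beta>-\frac{\nu}{2}$. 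Thus for $-\frac{\nu}{2}<\beta\le\frac{1-\nu}{2}$ one has $0<p\le1$, which is case~2), while $\beta\le-\frac{\nu}{2}$ gives $p\le0$, i.e. complete but not minimal, which is case~1).

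The main obstacle is to make this envelope estimate sharp enough to decide the two borderline values, since these fix the inclusive/exclusive endpoints. At $p=1$ (i.e. $\beta=\frac{1-\nu}{2}$) the weighted integral of $|G|^2$ diverges only logarithmically, so $G\notin\mathcal{F}$ and the system stays complete, which is why the right endpoint is included in case~2); symmetrically, at $p=0$ (i.e. $\beta=-\frac{\nu}{2}$) the one-point quotient lies only logarithmically outside $\mathcal{F}$, so minimality is lost and the left endpoint belongs to case~1). Capturing these requires the asymptotics of $\prod R_n$ with the correct constant and uniform control near the real axis and near the displaced zeros, rather than a mere order-of-magnitude bound. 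This, together with verifying that $G$ is genuinely the extremal divisor --- so that no other element of $\mathcal{F}$ vanishes on $\Lambda_{\beta,\nu}$ when $G\notin\mathcal{F}$, and that the one-point quotients are the minimal majorants relevant to minimality --- is where the real work lies.
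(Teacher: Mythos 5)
Your overall strategy is the same as the paper's: pass to the Fock space, prove the two-sided estimate $\left|G_{\Lambda_{\beta,\nu}}(z)\right|e^{-\frac{\pi}{2}|z|^2}\asymp \mathrm{dist}(z,\Lambda_{\beta,\nu})\,(1+|z|)^{-(\nu+2\beta)}$ (this is exactly part $(2)$ of Lemma \ref{lem1}, which the paper obtains by direct estimation of logarithmic sums; your Gamma-quotient/Stirling computation is a legitimate alternative for this particular sequence, provided you retain the $\mathrm{dist}$ factors near the displaced zeros), and then read off the trichotomy from the exponent $p=\nu+2\beta$. The genuine gap is in what you do after the estimate. The criteria you invoke --- ``complete iff $G\notin\mathcal{F}$'' and ``minimal iff $G/(\cdot-\lambda)\in\mathcal{F}$ for every $\lambda$'' --- are not citable facts; only the easy halves are automatic ($G\in\mathcal{F}$ implies non-completeness, and $G/(\cdot-\lambda)\in\mathcal{F}$ implies minimality). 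The converse implications carry case 1) entirely and the completeness assertion in case 2), and you explicitly defer them (``where the real work lies''). This is precisely what the paper's Lemma \ref{gen} supplies: any $F\in\mathcal{F}$ vanishing on $\Lambda_{\beta,\nu}$ factors as $F=G_{\Lambda_{\beta,\nu}}h$ with $h$ entire; the pointwise bound $|F(z)|\lesssim\|F\|e^{\frac{\pi}{2}|z|^2}$, valid for every Fock function, combined with the \emph{lower} estimate (here the $\mathrm{dist}(z,\Lambda_{\beta,\nu})$ factor is essential, the bare envelope $|z|^{-p}$ is not enough) forces $h$ to be a polynomial of degree at most $p$; integrating $|G_{\Lambda_{\beta,\nu}}h|^2e^{-\pi|z|^2}$ then yields divergence whenever $p\leq 1$, hence $F\equiv 0$ and $\Lambda_{\beta,\nu}$ is a uniqueness set. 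Non-minimality for $\beta\leq-\nu/2$ is the same argument applied to $\Lambda_{\beta,\nu}\setminus\{\lambda\}$ and the function $G_{\Lambda_{\beta,\nu}}(z)/(z-\lambda)$, whose exponent is $p+1\leq 1$. Without this factorization step your proposal establishes only case 3) and the minimality half of case 2).

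On the other hand, your stated ``main obstacle'' is not actually an obstacle: the borderline values $\beta=-\nu/2$ and $\beta=(1-\nu)/2$ do \emph{not} require asymptotics with the correct constant or anything sharper than the order-of-magnitude bound. The two-sided $\asymp$ estimate suffices at both endpoints, because at $p=1$ (respectively $p+1=1$) the relevant integrals diverge logarithmically, and divergence --- however slow --- is all the factorization argument needs to force $h\equiv 0$. So the missing ingredient is not finer asymptotics of $\prod R_n$ but precisely the divisor argument of Lemma \ref{gen}; once that is in place, your plan goes through and is essentially the paper's proof.
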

For a real number sequence $(\delta_{k,j})_{(k,j)\in\mathbb{Z}\times\{1,\cdots,N\}}$ we use the notations :
\[
\delta(\Lambda) := \underset{m\rightarrow\infty}{\limsup}\ \frac{1}{\log m}\underset{1\leq j\leq N}{\underset{-m\leq k\leq m}{\sum}}\ \delta_{k,j},\quad\mbox{and}\quad \hat{\delta}(\Lambda) := \underset{m\rightarrow\infty}{\liminf}\ \frac{1}{\log m}\underset{1\leq j\leq N}{\underset{-m\leq k\leq m}{\sum}}\ \delta_{k,j}.
\]
The following result is a generalization of Theorem \ref{corA}. 
\begin{theorem}\label{thm1}
Let $N$ be a positive integer and let $\Lambda_N:=\{\lambda_{m,n}\ :\ m\in\mathbb{Z},\ 1\leq n\leq N\}$ be a sequence of complex numbers. We write $\lambda_{m,n}=\gamma_{m,n}e^{\delta_{m,n}}e^{i\theta_{m,n}}$, for every $m\in\mathbb{Z}$ and $1\leq n\leq N$, where $\delta_{m,n},\theta_{m,n}\in\mathbb{R}.$ Let 
$$\Lambda:=\Big\{\gamma_{m,n}\in\Gamma_\nu\ :\ (m,n)\in\mathbb{Z}\times\big(\mathbb{Z}\setminus\{1\leq n\leq N\}\big) \Big\}\cup\Lambda_N.$$
Suppose that 
\begin{enumerate}[label=$\alph*)$,leftmargin=* ,parsep=0cm,itemsep=0cm,topsep=0cm]
\item\label{thm10} $\Lambda$ is separated,
\item\label{thm11} The sequences $\left(m\delta_{m,n}\right)_{(m,n)\in\mathbb{Z}\times\{1\leq n\leq N\}}$ and $\left(m\theta_{m,n}\right)_{(m,n)\in\mathbb{Z}\times\{1\leq n\leq N\}}$ are bounded,
\item\label{thm12} \begin{eqnarray}
 \nu-1\ <\ \hat{\delta}(\Lambda)\ \leq \ \delta(\Lambda)\ <\ \nu.
  \label{delta}
\end{eqnarray}
\end{enumerate}
Then $\mathcal{G}_{\Lambda}$ is complete and minimal in $ L^2(\mathbb{R})$.
\end{theorem}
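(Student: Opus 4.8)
The plan is to pass to the Bargmann–Fock picture and reduce the whole statement to the growth of a single explicit infinite product. Via the Bargmann transform, which sends $\rho_{(x,y)}g$ to a reproducing kernel of the Fock space $\mathcal{F}$, completeness of $\mathcal{G}_\Lambda$ in $L^2(\mathbb{R})$ is equivalent to $\Lambda$ being a uniqueness set for $\mathcal{F}$, while minimality is equivalent to the existence, for each $\lambda_0\in\Lambda$, of a nonzero element of $\mathcal{F}$ vanishing on $\Lambda\setminus\{\lambda_0\}$. Both are borderline properties: the natural object is a generating function $G_\Lambda$ whose zero set is exactly $\Lambda$ and which sits precisely at the critical growth $e^{\pi|z|^2/2}$. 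Such a function is itself not in $\mathcal{F}$, and everything hinges on the size of the polynomial correction to this critical growth.

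First I would build $G_\Lambda$ by isolating the perturbation. Because only the finitely many rows $1\le n\le N$ are moved, I factor
\[
G_\Lambda(z)=G_{\Gamma_\nu}(z)\,\Phi(z),\qquad
\Phi(z)=\prod_{n=1}^{N}\prod_{m\in\mathbb{Z}}\frac{z-\lambda_{m,n}}{z-\gamma_{m,n}},
\]
where $G_{\Gamma_\nu}$ is the generating function of the unperturbed sequence, whose critical growth and the associated completeness and minimality of $\mathcal{G}_{\Gamma_\nu}$ for $\nu\in(0,1]$ are already at our disposal. Hypothesis \ref{thm11}, the boundedness of $m\delta_{m,n}$ and $m\theta_{m,n}$, gives $|\lambda_{m,n}-\gamma_{m,n}|=O(1)$ together with summable second–order terms, so each inner product converges locally uniformly off the $\gamma_{m,n}$, and $\Phi$ is a well–defined meromorphic function with zeros $\{\lambda_{m,n}\}$ and poles $\{\gamma_{m,n}\}$ confined to the strip. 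Hypothesis \ref{thm10} then forces the zeros of $G_\Lambda$ to be simple and uniformly separated, which yields the lower bounds on $|G_\Lambda|$ away from its zeros and the non–vanishing of $G_\Lambda'(\lambda_0)$ that the two arguments below require.

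The heart of the matter, and the step I expect to be hardest, is a sharp two–sided estimate for $\Phi$. Writing $\lambda_{m,n}=\gamma_{m,n}e^{\delta_{m,n}+i\theta_{m,n}}$ and expanding the logarithm, hypothesis \ref{thm11} lets me discard all contributions that are summable uniformly in $z$ and reduce $\log|\Phi|$ to the partial sums of the $\delta_{m,n}$. The outcome I aim at is that, as $z\to\infty$ away from $\Lambda\cup\Gamma_\nu$, the quantity $-\log|\Phi(z)|$ is squeezed, up to $O(1)$, between $\hat\delta(\Lambda)\log|z|$ and $\delta(\Lambda)\log|z|$, so that $\Phi$ contributes exactly a polynomial correction of order between $|z|^{-\delta(\Lambda)}$ and $|z|^{-\hat\delta(\Lambda)}$, with the decisive contribution coming from directions transverse to the strip. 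The genuine difficulty is to make this estimate uniform in the whole plane rather than along a single ray, to control $\Phi$ near its own zeros and poles by means of the separation in \ref{thm10}, and to splice it cleanly onto the delicate borderline growth of $G_{\Gamma_\nu}$ on the real axis; this is precisely the ``estimates of some infinite products'' announced in the abstract.

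Finally I would read off the two conclusions from the window \eqref{delta}, the thresholds $\nu$ and $\nu-1$ emerging from combining the above correction with the real–axis behaviour of $G_{\Gamma_\nu}$. The upper bound $\delta(\Lambda)<\nu$ prevents $G_\Lambda$ from decaying too fast: if some nonzero $F\in\mathcal{F}$ vanished on $\Lambda$, then $F/G_\Lambda$ would be entire of subcritical growth, and the estimate above would force it to reduce to a polynomial incompatible with $F\in\mathcal{F}$, whence $F\equiv0$ and $\Lambda$ is a uniqueness set; this gives completeness. The lower bound $\hat\delta(\Lambda)>\nu-1$ prevents $G_\Lambda$ from growing too fast, so that $G_\Lambda(z)/(z-\lambda_0)$ still carries enough decay to belong to $\mathcal{F}$; by \ref{thm10} it does not vanish at $\lambda_0$ while vanishing on $\Lambda\setminus\{\lambda_0\}$, which gives minimality. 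Taken together these yield that $\mathcal{G}_\Lambda$ is complete and minimal, and Theorem \ref{corA} is recovered as the special case in which the bound \eqref{cd} forces $\delta(\Lambda)$ and $\hat\delta(\Lambda)$ to lie strictly inside $(\nu-1,\nu)$.
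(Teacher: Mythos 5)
Your strategy is the paper's: reduce via the Bargmann transform to showing $\Lambda$ is a uniqueness set of zero excess for $\mathcal{F}$ (Lemma \ref{00}), compare a generating function of $\Lambda$ with that of $\Gamma_\nu$ through a ratio product, then get completeness from the lower bound (divide out, force a polynomial, contradict integrability, as in part (2) of Lemma \ref{gen}) and minimality from the upper bound ($G_\Lambda(z)/(z-\lambda_0)\in\mathcal{F}$, part (1) of Lemma \ref{gen}). However, there are two genuine gaps with a common root. First, your $\Phi(z)=\prod_{n=1}^{N}\prod_{m\in\mathbb{Z}}\frac{z-\lambda_{m,n}}{z-\gamma_{m,n}}$ does not converge under hypotheses \ref{thm10}--\ref{thm12}. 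For fixed $n$ one has $\log\frac{z-\lambda_{m,n}}{z-\gamma_{m,n}}=(\delta_{m,n}+i\theta_{m,n})+O_z\!\left(m^{-2}\right)$, so the logarithm of the symmetric partial products equals $\sum_{|m|\le M}(\delta_{m,n}+i\theta_{m,n})+O(1)$, and condition \eqref{delta} expressly allows these partial sums to grow like $c\log M$ with $c\neq 0$ (take, e.g., $\delta_{m,n}=c/(2N|m|)$ for $m\neq0$, $\theta_{m,n}=0$): this logarithmic divergence is exactly the regime the theorem is about. That is why the paper works with the normalized ratios $\frac{1-z/\lambda_{m,n}}{1-z/\gamma_{m,n}}$, equivalently with Weierstrass factors $\left(1-z/\lambda_{m,n}\right)e^{z/\gamma_{m,n}+z^2/2\gamma_{m,n}^2}$, which converge absolutely because $1/\gamma_{m,n}-1/\lambda_{m,n}=O(m^{-2})$. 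The normalization is not cosmetic: your product and the paper's differ by the constants $\prod\gamma_{m,n}/\lambda_{m,n}=e^{-\sum(\delta_{m,n}+i\theta_{m,n})}$, i.e.\ by precisely the polynomial-in-$|z|$ factor you are trying to estimate, so it decides the sign with which $\sum\delta_{m,n}$ enters all subsequent bounds.

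Second, and as a consequence, your final paragraph does not cohere with your own estimate, and this matters because the window $(\nu-1,\nu)$ is not symmetric about $0$. Your squeeze (which is the correct one for the normalized product) reads $|z|^{-\delta(\Lambda)-\varepsilon}\lesssim|\Phi(z)|\lesssim|z|^{-\hat\delta(\Lambda)+\varepsilon}$; combined with $|G_{\Gamma_\nu}(z)|e^{-\pi|z|^2/2}\asymp \mathrm{dist}(z,\Gamma_\nu)\,(1+|z|)^{-\nu}$, the completeness (division) argument uses the lower bound on $|G_\Lambda|$, whose exponent is $\nu+\delta(\Lambda)$, and closes only if $\nu+\delta(\Lambda)\le 1$, i.e.\ $\delta(\Lambda)\le 1-\nu$; the minimality argument uses the upper bound, exponent $\nu+\hat\delta(\Lambda)$, and needs $\hat\delta(\Lambda)>-\nu$. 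Neither threshold follows from \eqref{delta} when $\nu\neq 1/2$, so the deduction ``$\delta(\Lambda)<\nu$ gives completeness, $\hat\delta(\Lambda)>\nu-1$ gives minimality'' is invalid as written. In the paper's Lemma \ref{lem1} the perturbation enters with the opposite sign: the two-sided bound is $\mathrm{dist}(z,\Lambda)\,(1+|\mathrm{Im}\,z|)^{M}(1+|z|)^{-(\nu-\hat\delta)-M}\lesssim |G_\Lambda(z)|e^{-\pi|z|^2/2}\lesssim \mathrm{dist}(z,\Lambda)\,(1+|z|)^{\delta-\nu+M}(1+|\mathrm{Im}\,z|)^{-M}$, so that $\delta(\Lambda)<\nu$ feeds the \emph{upper} bound and yields \emph{minimality} (Lemma \ref{gen}(1) with $\alpha=\nu-\delta>0$), while $\hat\delta(\Lambda)>\nu-1$ feeds the \emph{lower} bound and yields \emph{completeness} (Lemma \ref{gen}(2) with $\alpha=\nu-\hat\delta\le1$) --- the pairing opposite to yours. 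The discrepancy is exactly the sign carried by the normalizing constants from the first gap, and it cannot be shrugged off as a convention: calibrating against Theorem \ref{thm2} (for $\Lambda_{\beta,\nu}$ one computes $\delta(\Lambda_{\beta,\nu})=\hat\delta(\Lambda_{\beta,\nu})=2\beta$, and completeness and minimality hold exactly for $-\nu<2\beta\le 1-\nu$) shows which convention produces which window. Until the product is properly normalized and this bookkeeping is fixed consistently, your argument proves the theorem only in the self-mirrored case $\nu=1/2$.
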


\subsection{{Unrestricted} perturbation of $\Gamma_\nu,\ 0< \nu\leq 1$}
In this subsection we are looking for perturbations of the whole sequence $\Gamma_\nu$. Our  results are similar to those of the previous subsection. We first establish  the following fact
\begin{theorem}\label{corAfull}
Let  $\Lambda:=\{\lambda_{m,n}\}_{m,n\in\mathbb{Z}}$ 
be a sequence of complex numbers such that
\begin{eqnarray}\label{cdfull}
\left|\gamma_{m,n}-\lambda_{m,n}\right|\leq \frac{\delta}{|\gamma_{m,n}|},
\end{eqnarray}
for some $\delta<\min\{\nu,1-\nu\}$. Then   $\mathcal{G}_\Lambda$ is complete and minimal in $ L^2(\mathbb{R})$.
\end{theorem}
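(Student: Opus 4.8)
The plan is to pass to the Bargmann--Fock space and reduce the problem to a statement about a single generating function. Under the Bargmann transform $B$, the Gaussian time-frequency shifts $\rho_{(x,y)}g$ become, up to unimodular and normalizing constants $e^{-\pi|w|^2/2}$, the reproducing kernels $k_w(z)=e^{\pi\bar w z}$ of the Fock space $\mathcal{F}$ equipped with the norm $\|f\|^2=\int_{\mathbb{C}}|f(z)|^2e^{-\pi|z|^2}\,dA(z)$, where $w=x+iy$. Thus $\mathcal{G}_\Lambda$ is complete and minimal in $L^2(\mathbb{R})$ if and only if $\{k_\lambda\}_{\lambda\in\Lambda}$ is complete and minimal in $\mathcal{F}$; equivalently, $\Lambda$ is a uniqueness set for $\mathcal{F}$ (completeness) while, for each $\lambda_0\in\Lambda$, there is a nonzero $f\in\mathcal{F}$ vanishing on $\Lambda\setminus\{\lambda_0\}$ but not at $\lambda_0$ (minimality). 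First I would attach to $\Lambda$ an entire generating function $G_\Lambda$ whose zero set is exactly $\Lambda$, built as an infinite product organized along the rows and columns of the perturbed lattice, with the usual convergence factors needed to reach order two.

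The heart of the argument is a pair of matching estimates for $G_\Lambda$ along a sequence of circles $|z|=R_k\to\infty$ chosen to stay uniformly away from $\Lambda$, of the form
\begin{equation*}
c\,\frac{e^{\frac{\pi}{2}|z|^2}}{|z|^{\rho}}\ \leq\ |G_\Lambda(z)|\ \leq\ C\,\frac{e^{\frac{\pi}{2}|z|^2}}{|z|^{\rho}},
\end{equation*}
where $\rho=\rho(\nu,\delta)$ is a polynomial correction exponent. I would obtain these by comparing $G_\Lambda$ with the generating function $G_{\Gamma_\nu}$ of the unperturbed sequence, for which the analogous estimate holds with exponent $\rho_0=1-\nu$, and by controlling the correction product $\prod_{m,n}\frac{z-\lambda_{m,n}}{z-\gamma_{m,n}}$. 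Writing $\lambda_{m,n}=\gamma_{m,n}+\eta_{m,n}$ with $|\eta_{m,n}|\leq\delta/|\gamma_{m,n}|$, each factor deviates from $1$ by $O(\eta_{m,n}/(z-\gamma_{m,n}))$; the decay hypothesis forces $|\eta_{m,n}|=O(1/|\gamma_{m,n}|)$, and a careful summation, after inserting the appropriate exponential convergence factors, shows that the correction product preserves the quadratic growth $e^{\frac{\pi}{2}|z|^2}$ exactly and only perturbs the polynomial exponent, by at most $\delta$, so that $|\rho-(1-\nu)|\leq\delta$.

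With the two-sided estimate in hand, the two conclusions are routine. For minimality, the pointwise bound gives $|G_\Lambda(z)/(z-\lambda_0)|^2e^{-\pi|z|^2}\asymp|z|^{-2\rho-2}$ away from the zeros, which is area-integrable at infinity precisely when $\rho>0$, so $G_\Lambda/(z-\lambda_0)\in\mathcal{F}$ and provides the required biorthogonal element. For completeness, if $f\in\mathcal{F}$ vanishes on $\Lambda$ then $h:=f/G_\Lambda$ is entire; the lower bound together with the standard Fock pointwise estimate $|f(z)|\leq\|f\|\,e^{\frac{\pi}{2}|z|^2}$ shows $h$ is bounded on the circles $|z|=R_k$, hence bounded and constant, so $f=cG_\Lambda$; but the same estimate gives $\int|G_\Lambda|^2e^{-\pi|z|^2}\,dA\asymp\int|z|^{-2\rho}\,dA=\infty$ when $\rho\leq 1$, so $G_\Lambda\notin\mathcal{F}$ and $c=0$. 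Both requirements $\rho>0$ and $\rho\leq 1$ hold as soon as $|\rho-(1-\nu)|\leq\delta$ with $\delta<\min\{\nu,1-\nu\}$, since then $0<1-\nu-\delta\leq\rho\leq 1-\nu+\delta<1$. This is exactly where the threshold $\delta<\min\{\nu,1-\nu\}$ originates, and it dovetails with the trichotomy of Theorem \ref{thm2}.

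The main obstacle is the middle step: establishing the two-sided product estimate for the fully perturbed sequence. In contrast to the finite-strip setting of Theorem \ref{corA}, where all but finitely many rows coincide with the exact lattice and $G_{\Gamma_\nu}$ is available in closed theta-function form, here every zero is displaced, so one must show that the infinitely many small displacements neither destroy the convergence of the product nor alter the exponential type, and accumulate in the polynomial exponent to a total of at most $\delta$. Controlling this accumulation uniformly in $z$ along circles that must simultaneously avoid the perturbed zeros, and verifying that the decay rate $\delta/|\gamma_{m,n}|$ is exactly what makes the relevant, otherwise only logarithmically divergent, sums normalize to a bounded contribution, is the technical crux on which the whole argument rests.
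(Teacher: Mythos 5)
Your overall strategy is the same as the paper's: pass to the Fock space (Lemma \ref{00}), attach to $\Lambda$ a Weierstrass-type product $G_\Lambda$, estimate it by comparison with $G_{\Gamma_\nu}$, then get minimality from integrability of $G_\Lambda/(z-\lambda_0)$ and completeness from a division-plus-divergence argument (this is exactly the chain Lemma \ref{lem1full} $+$ Lemma \ref{gen} $=$ Theorem \ref{thm1full}, of which the paper's proof of the present theorem is a verification of hypotheses). However, the device on which you hang the key estimates fails. $\Lambda$ is a separated sequence of density $1$: an annulus $\{R-c\le |z|\le R+c\}$ of fixed width contains $\asymp Rc$ points of $\Lambda$ once $R$ is large, so there are \emph{no} circles $|z|=R_k\to\infty$ staying uniformly away from $\Lambda$; the best achievable is $\mathrm{dist}(z,\Lambda)\asymp 1/R_k$ on the circle, a loss of exactly the same order as the polynomial corrections you are tracking. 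Moreover, even granting good circles, you cannot transfer bounds from $|z|=R_k$ to the rest of the plane by the maximum principle, because the weight $e^{\frac{\pi}{2}|z|^2}$ is not the modulus of an entire function and changes by the unbounded factor $e^{\frac{\pi}{2}(R_{k+1}^2-R_k^2)}$ between consecutive admissible circles. Both of your concluding steps genuinely need two-dimensional information: the minimality step is an area integral of $|G_\Lambda(z)/(z-\lambda_0)|^2e^{-\pi|z|^2}$ over all of $\mathbb{C}$, including neighborhoods of the zeros, and the completeness step needs a lower bound for $|G_\Lambda|$ on a set of positive area density. This is precisely why the paper proves \emph{global} two-sided estimates carrying an explicit factor $\mathrm{dist}(z,\Lambda)$ (and anisotropic factors $\bigl(\frac{1+|z|}{1+|\textmd{Im}\,z|}\bigr)^{\pm M}$), valid for every $z\in\mathbb{C}$, and then runs the integrability and divergence arguments of Lemma \ref{gen} over the whole plane.

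Two further inaccuracies. First, the unperturbed exponent is wrong: the shift of the positive-axis zeros contributes $\sum_{1\le m\le 2|z|}\log\frac{m}{m+\nu}=-\nu\log|z|+O(1)$, so $|G_{\Gamma_\nu}(z)|e^{-\frac{\pi}{2}|z|^2}\asymp \mathrm{dist}(z,\Gamma_\nu)(1+|z|)^{-\nu}$, i.e.\ $\rho_0=\nu$, not $1-\nu$. Your final numerology survives only because $\min\{\nu,1-\nu\}$ is symmetric under $\nu\leftrightarrow 1-\nu$; with the correct exponent it is minimality that forces $\delta<\nu$ and completeness that forces $\delta<1-\nu$, in agreement with the trichotomy of Theorem \ref{thm2full}. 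Second, ``$h$ is bounded on the circles, hence constant'' is too quick even after repair: the available lower bound only yields $|h(z)|\lesssim (1+|z|)^{\rho+O(1)}$, so $h$ is a polynomial of low degree, and it must then be killed by the same divergence computation (this is exactly how Lemma \ref{gen} proceeds, and the extra polynomial factor only helps the divergence). Finally, note that the paper's actual proof is the cleaner bookkeeping you hint at in your last paragraph: it converts \eqref{cdfull} into multiplicative data $|\delta_\gamma|+|\theta_\gamma|\lesssim|\gamma|^{-2}$, checks separation, and bounds $\bigl|\sum_{|\gamma|\le R}\delta_\gamma\bigr|\le\delta\sum_{|\gamma|\le R}|\gamma|^{-2}+O(1)=\delta\log R+O(1)$, which is the hypothesis \eqref{deltafull} of Theorem \ref{thm1full}.
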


The condition $\delta<\min\{\nu,1-\nu\}$ in Theorem \ref{corAfull} is optimal whenever $\nu\leq 1/2$. Indeed, for a given real parameter $\beta$ we consider the following sequence
\begin{eqnarray}
\widetilde{\Lambda}_{\beta,\nu} :=\left\{\lambda_\gamma:=\gamma e^{\frac{\beta}{|\gamma|^2}}\ :\ \gamma\in\Gamma_\nu \right\}.
\end{eqnarray}
We simply remark that 
\[
\underset{|\gamma|\rightarrow\infty}{\lim}|\gamma||\gamma-\lambda_\gamma|=\underset{|\gamma|\rightarrow\infty}{\lim}|\gamma|^2\left|1-e^{\frac{\beta}{|\gamma|^2}}\right| = |\beta|.
\]
However, if $\nu\leq 1/2$ and $\beta=-\nu,$ we have  $\delta=|\beta|=\nu$ and the system $\mathcal{G}_{\widetilde{\Lambda}_{\beta,\nu}}$ is complete and not minimal in $ L^2(\mathbb{R})$. More precisely, we prove the following result
\begin{theorem}\label{thm2full}
Given a real parameter $\beta$, the following hold :
\begin{enumerate}[label=$\arabic*)$,leftmargin=* ,parsep=0cm,itemsep=0cm,topsep=0cm]
\item If $\beta \leq -\nu$, then  $\mathcal{G}_{\widetilde{\Lambda}_{\beta,\nu}}$ is complete and not minimal in $ L^2(\mathbb{R})$.
\item If $-\nu < \beta\leq 1-\nu$, then $\mathcal{G}_{\widetilde{\Lambda}_{\beta,\nu}}$ is complete and minimal in $ L^2(\mathbb{R}).$
\item If $\beta > 1-\nu$, then $\mathcal{G}_{\widetilde{\Lambda}_{\beta,\nu}}$ is not complete in $ L^2(\mathbb{R})$.
\end{enumerate}
\end{theorem}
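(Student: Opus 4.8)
The plan is to transfer the problem to the Bargmann--Fock space $\mathcal{F}^2$ via the Bargmann transform, under which $L^2(\mathbb{R})$ is unitarily identified with the space of entire functions $F$ with $\int_{\mathbb{C}}|F(z)|^2 e^{-\pi|z|^2}\,dA(z)<\infty$, and each time--frequency shift $\rho_{(x,y)}g$ of the Gaussian is carried, up to a unimodular constant, to the reproducing kernel $k_w$ at the point $w=x+iy$. Under this dictionary $\mathcal{G}_{\widetilde{\Lambda}_{\beta,\nu}}$ is complete in $L^2(\mathbb{R})$ if and only if $\widetilde{\Lambda}_{\beta,\nu}$ is a uniqueness set for $\mathcal{F}^2$ (no nonzero $F\in\mathcal{F}^2$ vanishes on it), and it is minimal if and only if for every $\lambda_0\in\widetilde{\Lambda}_{\beta,\nu}$ there is $F\in\mathcal{F}^2$ vanishing on $\widetilde{\Lambda}_{\beta,\nu}\setminus\{\lambda_0\}$ with $F(\lambda_0)\neq 0$. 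Thus each of the three assertions reduces to deciding membership, or non-membership, in $\mathcal{F}^2$ of explicit entire functions with prescribed zero sets.

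Next I would build the canonical generating function. Let $G_\nu$ denote the entire function with simple zeros exactly on $\Gamma_\nu$, obtained from the Weierstrass/theta product of the square lattice with its real-axis zeros replaced according to $\Gamma_\nu$; the completeness and minimality of $\mathcal{G}_{\Gamma_\nu}$ are reflected in a sharp two-sided estimate $|G_\nu(z)|\asymp e^{\frac{\pi}{2}|z|^2}\,\mathrm{dist}(z,\Gamma_\nu)\,w_\nu(z)$, with $w_\nu$ a sub-Gaussian envelope carrying the $\nu$-dependence. For the perturbed set I would factor $G_{\beta,\nu}(z)=G_\nu(z)\,P_\beta(z)$, where $P_\beta(z)=\prod_{\gamma\in\Gamma_\nu}\frac{z-\lambda_\gamma}{z-\gamma}$ is the suitably regularized quotient of canonical products. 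Since $\lambda_\gamma-\gamma=\gamma(e^{\beta/|\gamma|^2}-1)\approx\beta/\bar{\gamma}$, the quantity $\log|P_\beta(z)|$ is estimated by an infinite-product/potential computation whose leading contribution is of order $|z|^2$ with coefficient proportional to $\beta$; this is exactly the mechanism by which the effective growth rate of $G_{\beta,\nu}$ relative to the weight $e^{-\pi|z|^2}$ is shifted, and it is what produces the two thresholds $\beta=-\nu$ and $\beta=1-\nu$. This step runs parallel to the proof of the finite-strip Theorem \ref{thm2}, the new feature being that all rows are now perturbed, so the defining product ranges over the full lattice.

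With the two-sided estimate for $|G_{\beta,\nu}|$ in hand, the three cases follow by reading off Fock membership. For $\beta>1-\nu$ the estimate gives $G_{\beta,\nu}\in\mathcal{F}^2$; since it vanishes on all of $\widetilde{\Lambda}_{\beta,\nu}$, the set is not a uniqueness set and $\mathcal{G}_{\widetilde{\Lambda}_{\beta,\nu}}$ is not complete. For $-\nu<\beta\leq 1-\nu$ one shows $G_{\beta,\nu}\notin\mathcal{F}^2$ (completeness, via the argument that any $F\in\mathcal{F}^2$ vanishing on $\widetilde{\Lambda}_{\beta,\nu}$ makes $F/G_{\beta,\nu}$ entire and forces $F\equiv 0$), while $G_{\beta,\nu}(z)/(z-\lambda_0)\in\mathcal{F}^2$ for each $\lambda_0$ (minimality), so the system is complete and minimal; a subrange of this interval is already covered by Theorem \ref{corAfull}, so the content here is the extension up to the endpoint $1-\nu$. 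For $\beta\leq-\nu$ the points are pushed inward, the set becomes critically over-dense, and one shows that even $G_{\beta,\nu}(z)/(z-\lambda_0)$ fails to lie in $\mathcal{F}^2$; promoting this to the statement that $\widetilde{\Lambda}_{\beta,\nu}\setminus\{\lambda_0\}$ is still a uniqueness set yields completeness together with non-minimality.

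The main obstacle is the sharp control of the infinite product $P_\beta$, together with the endpoint analysis at $\beta=-\nu$ and $\beta=1-\nu$. The displacements $\lambda_\gamma-\gamma\approx\beta/\bar{\gamma}$ are not absolutely summable over the lattice, so the product must be organized by symmetric (principal-value) summation, and the cancellation in $\sum_{\gamma}\mathrm{Re}\,\frac{\beta/\bar{\gamma}}{z-\gamma}$ must be extracted precisely enough to pin down the constant multiplying $|z|^2$; this is what fixes the exact thresholds rather than a range with a gap. I expect the lower bound on $|G_{\beta,\nu}|$ --- needed for the uniqueness-set (completeness and non-minimality) directions --- to be harder than the upper bound, since it must be uniform away from the zeros and survive the borderline behaviour at the two endpoints, where the relevant function sits exactly on the boundary of $\mathcal{F}^2$.
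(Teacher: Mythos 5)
Your architecture---Bargmann transfer, the factorization $G_{\widetilde{\Lambda}_{\beta,\nu}}=G_{\Gamma_\nu}\,P_\beta$ with $P_\beta(z)=\prod_{\gamma\in\Gamma_\nu}\frac{1-z/\lambda_\gamma}{1-z/\gamma}$, a two-sided estimate for the canonical product, and the reduction of the three cases to Fock membership of $G$ and of $G/(z-\lambda_0)$---is exactly the paper's (Lemma \ref{lem1full}(2) followed by Lemma \ref{gen}). But your central quantitative claim is wrong, and it is the step on which everything else rests: you assert that $\log|P_\beta(z)|$ has leading term of order $|z|^2$ with coefficient proportional to $\beta$, i.e.\ that the perturbation shifts the Gaussian growth rate. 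It does not. The map $\gamma\mapsto\gamma e^{\beta/|\gamma|^2}$ preserves the asymptotic density of $\Gamma_\nu$, so the exponent $\frac{\pi}{2}|z|^2$ is untouched; the perturbation only changes a \emph{polynomial} factor. Concretely, writing each factor as $\frac{\gamma}{\lambda_\gamma}\cdot\frac{z-\lambda_\gamma}{z-\gamma}$, the modulus factors contribute $-\beta\sum_{|\gamma|\le 2|z|}|\gamma|^{-2}\asymp -\beta\log|z|$ (a logarithmic sum, since $\sum_{|\gamma|\le R}|\gamma|^{-2}\asymp\log R$ for a planar lattice); the tail $|\gamma|>2|z|$ contributes $O(1)$ because $|\lambda_\gamma-\gamma|\lesssim|\gamma|^{-1}$ makes those terms of size $|z|/|\gamma|^{3}$; and the translated-zero factors contribute $\log\frac{\mathrm{dist}(z,\widetilde{\Lambda}_{\beta,\nu})}{\mathrm{dist}(z,\Gamma_\nu)}$ plus an error $O\left(\log\frac{1+|z|}{1+|\mathrm{Im}\, z|}\right)$ after extracting the real-part cancellation. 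This is precisely the paper's Lemma \ref{lem1full}(2): $\left|G_{\widetilde{\Lambda}_{\beta,\nu}}(z)\right|e^{-\frac{\pi}{2}|z|^2}$ is comparable to $\mathrm{dist}(z,\widetilde{\Lambda}_{\beta,\nu})\,(1+|z|)^{-(\nu+\beta)}$ up to factors $\left(\frac{1+|z|}{1+|\mathrm{Im}\, z|}\right)^{\pm M}$.

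The error is not cosmetic: a genuine $c\beta|z|^2$ correction is incompatible with the theorem you are proving. If $\log|P_\beta(z)|$ grew like $c\beta|z|^2$ with $c\neq 0$, then membership in $\mathcal{F}$ of $G_{\widetilde{\Lambda}_{\beta,\nu}}$ and of all its polynomial modifications would be decided by the sign of $\beta$ alone---an exponential shift swamps every polynomial factor---so no finite nonzero thresholds at $\beta=-\nu$ and $\beta=1-\nu$ could appear. The thresholds come exactly from the polynomial exponent $\alpha=\nu+\beta$ together with the dichotomy $\int_{\mathbb{C}}(1+|z|)^{-2\alpha}\,dA(z)<\infty \iff \alpha>1$: for $\alpha>1$ the upper bound gives $G_{\widetilde{\Lambda}_{\beta,\nu}}\in\mathcal{F}$, hence non-completeness; for $0<\alpha\le 1$ one gets $G/(z-\lambda_0)\in\mathcal{F}$ (minimality) while the lower bound forces any $F\in\mathcal{F}$ vanishing on $\widetilde{\Lambda}_{\beta,\nu}$ to be $hG$ with $h$ polynomial and then $h\equiv 0$ by divergence of the corresponding integral (completeness); for $\alpha\le 0$ the same divergence argument applied to the punctured set $\widetilde{\Lambda}_{\beta,\nu}\setminus\{\lambda_0\}$, whose effective exponent is $\alpha+1\le 1$, rules out any Fock function vanishing there (completeness without minimality). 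This is Lemma \ref{gen}. So the repair is to redo your potential-theoretic step: the ``constant multiplying $|z|^2$'' that you propose to pin down is zero, and what must be pinned down instead is the coefficient of $\log|z|$; once that is done, your outline coincides with the paper's proof.
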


Now, if $(\delta_\gamma)_{\gamma\in\Gamma_\nu}$ is a sequence of real numbers, we put 
\[
\delta(\Lambda):= \underset{R\rightarrow\infty}{\limsup}\  \frac{1}{\log R}{\underset{|\gamma|\leq R}{\sum}}\ \delta_{\gamma},\quad\mbox{and}\quad \hat{\delta}(\Lambda):= \underset{R\rightarrow\infty}{\liminf}\ \frac{1}{\log R}{\underset{|\gamma|\leq R}{\sum}}\ \delta_{\gamma}.
\]
Theorem \ref{corAfull} will be obtained as a consequence of the following more general result. 
\begin{theorem}\label{thm1full}
Let  $\Lambda:=\{\lambda_{\gamma}\}_{\gamma\in\Gamma_\nu}$ 
be a sequence of complex numbers. We write $\lambda_{\gamma}=\gamma e^{\delta_{\gamma}}e^{i\theta_{\gamma}}$, for every $\gamma\in\Gamma_\nu$, where $\delta_{\gamma},\theta_{\gamma}\in\mathbb{R}.$ Assume that 
\begin{enumerate}[label=$\roman*)$,leftmargin=* ,parsep=0cm,itemsep=0cm,topsep=0cm]
\item\label{thm10full} $\Lambda$ is separated,
\item\label{thm11full} The sequences $\left(\gamma^2\delta_{\gamma}\right)_{\gamma\in\Gamma_\nu}$ and $\left(\gamma^2\theta_{\gamma}\right)_{\gamma\in\Gamma_\nu}$ are bounded,
\item\label{thm12full} \begin{eqnarray}
 \nu-1\ <\ \hat{\delta}(\Lambda)\ \leq \ \delta(\Lambda)\ < \nu
 . \label{deltafull}
\end{eqnarray}
\end{enumerate}
Then $\mathcal{G}_{\Lambda}$ is complete and minimal in $ L^2(\mathbb{R})$.
\end{theorem}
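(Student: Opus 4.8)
The plan is to transport the problem, through the Bargmann transform $B\colon L^2(\mathbb{R})\to\mathcal{F}$, into a completeness-and-minimality question for a system of reproducing kernels in the Bargmann--Fock space $\mathcal{F}$. Since $B$ is unitary and sends each time-frequency shift $\rho_{(x,y)}g$ to a unimodular multiple of the normalized reproducing kernel at $z=x+iy$, and since completeness and minimality are unaffected by unimodular rescaling, the Gabor system $\mathcal{G}_\Lambda$ is complete and minimal in $L^2(\mathbb{R})$ if and only if the kernel system $\{k_\lambda\}_{\lambda\in\Lambda}$ is complete and minimal in $\mathcal{F}$. Because $\langle F,k_\lambda\rangle=F(\lambda)$, this splits into two tasks: (A) showing that $\Lambda$ is a uniqueness set for $\mathcal{F}$, which is completeness; and (B) showing that for every $\lambda_0\in\Lambda$ the set $\Lambda\setminus\{\lambda_0\}$ fails to be a uniqueness set, which is minimality, the witnessing functions being exactly the biorthogonal system.

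First I would produce a \emph{generating function} $G_\Lambda$: an entire function with simple zeros precisely at the points of $\Lambda$, written as an infinite product over $\lambda_\gamma=\gamma e^{\delta_\gamma}e^{i\theta_\gamma}$. Both (A) and (B) then reduce to a single two-sided estimate for $|G_\Lambda(z)|\,e^{-\pi|z|^2/2}$: a lower bound on this quantity forces $\Lambda$ to be a uniqueness set, since then no nonzero entire multiple of $G_\Lambda$ can lie in $\mathcal{F}$, giving (A); an upper bound places $G_\Lambda(z)/(z-\lambda_0)$ inside $\mathcal{F}$ for each $\lambda_0$, giving (B) together with the biorthogonal family. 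To obtain both bounds I would compare $G_\Lambda$ with the generating function $G_{\Gamma_\nu}$ of the unperturbed sequence, which is assembled from the Weierstrass $\sigma$-function of the lattice $\mathbb{Z}+i\mathbb{Z}$ and from factors built out of Euler's Gamma function that move the non-negative real-axis zeros to $\{m+\nu\}_{m\geq 0}$ and delete the zero at the origin. Writing $G_\Lambda=G_{\Gamma_\nu}\cdot\Pi$, hypothesis \ref{thm11full} (that $\gamma^2\delta_\gamma$ and $\gamma^2\theta_\gamma$ are bounded, so $\delta_\gamma,\theta_\gamma=O(|\gamma|^{-2})$) guarantees that the factors of the correction product $\Pi$ differ from $1$ by $O(|\gamma|^{-1}|z-\gamma|^{-1})$-type quantities; this is what makes $\Pi$ converge, keeps $G_\Lambda$ of the correct order, and ensures that $\delta(\Lambda)$ and $\hat\delta(\Lambda)$ are finite because each partial sum $\sum_{|\gamma|\le R}\delta_\gamma$ is $O(\log R)$.

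The crux is the precise analysis of this product, and this is where condition \eqref{deltafull} enters. Pushing the zeros outward (taking $\delta_\gamma>0$) thins the sequence and lowers $\log|G_\Lambda|$, whereas pulling them inward raises it; to leading order the net effect on $\log|G_\Lambda(z)|-\tfrac{\pi}{2}|z|^2$ is governed by the logarithmically averaged dilations $\tfrac{1}{\log R}\sum_{|\gamma|\le R}\delta_\gamma$ at the scale $R\sim|z|$, the angular shifts $\theta_\gamma$ contributing only a lower-order oscillation that the comparison with $G_{\Gamma_\nu}$ absorbs. Thus the \emph{limsup} $\delta(\Lambda)$ controls the sparsest scales and secures the lower growth bound needed for completeness provided $\delta(\Lambda)<\nu$, while the \emph{liminf} $\hat\delta(\Lambda)$ controls the densest scales and secures the upper growth bound needed for minimality provided $\hat\delta(\Lambda)>\nu-1$; the asymmetric thresholds $\nu$ and $\nu-1$ reflect the $\nu$-shift already built into $\Gamma_\nu$. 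That a two-sided condition of this kind is essentially unavoidable is confirmed by the borderline behavior of the explicit family $\widetilde\Lambda_{\beta,\nu}$ in Theorem \ref{thm2full}, where crossing either endpoint destroys minimality on one side and completeness on the other.

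The step I expect to be the main obstacle is establishing the growth estimate \emph{uniformly in all directions}, and in particular uniformly near the perturbed points, where separation (hypothesis \ref{thm10full}) is indispensable to prevent individual factors of the product from degenerating and to keep the zeros simple. Unlike the finite-strip situation of Theorem \ref{thm1}, in which the perturbation lives on a bounded number of horizontal lines and the estimate is essentially one-dimensional, here the dilations act on the full two-dimensional lattice and the distribution of $\Gamma_\nu$ is not radial. One must therefore split $G_\Lambda$ into a bulk part, handled by direct comparison with $\sigma$, and a perturbed part, and then carry out the delicate logarithmic averaging that converts the pointwise data $(\delta_\gamma,\theta_\gamma)$ into the sharp quantities $\delta(\Lambda)$ and $\hat\delta(\Lambda)$ without losing the strictness of the inequalities in \eqref{deltafull}. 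Matching these two contributions at the very boundary of convergence is where the real work lies.
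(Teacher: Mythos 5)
Your architecture is exactly the paper's: the Bargmann-transform reduction (Lemma \ref{00}) to showing that $\Lambda$ is a uniqueness set of zero excess for $\mathcal{F}$; a modified Weierstrass product $G_\Lambda$ compared with $G_{\Gamma_\nu}$ and estimated from both sides (this is Lemma \ref{lem1full}); and the dichotomy of Lemma \ref{gen}, in which the lower bound yields uniqueness (completeness) and the upper bound puts each $G_\Lambda(z)/(z-\lambda_0)$ in $\mathcal{F}$ (minimality, with these quotients giving the biorthogonal family). The roles you assign to separation, to hypothesis \ref{thm11full}, and to the logarithmic averaging are also the paper's.

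The genuine problem sits in your third paragraph, at the very point you call the crux: your pairing of $\delta(\Lambda),\hat\delta(\Lambda)$ with the two bounds is incompatible with the thresholds you attach to them. With the stated normalization $\lambda_\gamma=\gamma e^{\delta_\gamma}e^{i\theta_\gamma}$ one has $\log|\gamma/\lambda_\gamma|=-\delta_\gamma$, hence, modulo the local factors,
\begin{equation*}
\log\bigl|G_\Lambda(z)\bigr|-\log\bigl|G_{\Gamma_\nu}(z)\bigr|
\;=\; -\sum_{|\gamma|\le 2|z|}\delta_\gamma
\;+\;\log\frac{\mathrm{dist}(z,\Lambda)}{\mathrm{dist}(z,\Gamma_\nu)}
\;+\;O\!\left(\log\frac{1+|z|}{1+|\mathrm{Im}\, z|}\right)+O(1),
\end{equation*}
so, as you correctly say, outward perturbation lowers $|G_\Lambda|$: the limsup governs the \emph{lower} bound (completeness) and the liminf the \emph{upper} bound (minimality). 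But then Lemma \ref{gen} gives completeness when $\nu+\delta(\Lambda)\le 1$, i.e.\ $\delta(\Lambda)<1-\nu$, and minimality when $\nu+\hat\delta(\Lambda)>0$, i.e.\ $\hat\delta(\Lambda)>-\nu$ --- not the thresholds $\delta(\Lambda)<\nu$ and $\hat\delta(\Lambda)>\nu-1$ that you quote. Indeed, the family $\widetilde{\Lambda}_{\beta,\nu}$ you invoke as confirmation (for which $\delta_\gamma=\beta/|\gamma|^2$, so $\delta(\Lambda)=\hat\delta(\Lambda)$ is a positive multiple of $\beta$) has, by Theorem \ref{thm2full}, its critical values at $\beta=-\nu$ (minimality) and $\beta=1-\nu$ (completeness), which supports exactly this pairing and these thresholds rather than yours. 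The paper's own proof makes the opposite pairing (limsup $\to$ upper bound $\to$ minimality; liminf $\to$ lower bound $\to$ completeness), which is what reproduces \eqref{deltafull}, but it rests on the sign slip $\log|\gamma_{m,n}/\lambda_{m,n}|=+\delta_{m,n}$ in the proofs of Lemmas \ref{lem1} and \ref{lem1full}; the two bookkeepings coincide precisely when $\nu=1/2$. So your plan, executed faithfully under the stated convention, proves Theorem \ref{thm1full} with \eqref{deltafull} replaced by $-\nu<\hat\delta(\Lambda)\le\delta(\Lambda)<1-\nu$ (equivalently, the stated theorem for $\lambda_\gamma=\gamma e^{-\delta_\gamma}e^{i\theta_\gamma}$). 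This is bookkeeping rather than architecture, but until you fix one sign convention and make the pairing and the thresholds agree, the key estimate of your third paragraph cannot be stated, let alone proved, coherently.
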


\subsection{Perturbation of the Ascensi-Lyubarskii-Seip sequence}
 In order to simplify the statements, we present first our results by looking at the perturbation on a half-axis. Theorem \ref{corGaborALS} gives the  general case. 
\begin{theorem}\label{corAALS}
Let $\Lambda:=\{\lambda_n,\ -\sqrt{2n},\ \pm i\sqrt{2n}\ :\ n\geq 1\}\cup\{\pm 1\}$ be a sequence of complex numbers. If
\begin{eqnarray}\label{cdALS}
\left|\sqrt{2n}-\lambda_n\right|\leq \delta/\sqrt{2n},\quad n\geq 1,
\end{eqnarray}
for some $\delta<1/2$, then  $\mathcal{G}_\Lambda$ is complete and minimal in $ L^2(\mathbb{R})$.
\end{theorem}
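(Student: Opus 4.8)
The plan is to transfer the whole problem to the Bargmann--Fock space $\mathcal{F}^2$ and read it off the zero sets of entire functions. Recall that the Bargmann transform $B$ is a unitary map of $L^2(\mathbb{R})$ onto $\mathcal{F}^2$ sending each shift $\rho_{(x,y)}g$ to a nonvanishing multiple of the reproducing kernel at $z=x+iy$. Hence $\mathcal{G}_\Lambda$ is complete if and only if $\Lambda$ is a uniqueness set for $\mathcal{F}^2$, and $\mathcal{G}_\Lambda$ is minimal if and only if for every $\lambda\in\Lambda$ there is a nonzero $H_\lambda\in\mathcal{F}^2$ vanishing on $\Lambda\setminus\{\lambda\}$ but not at $\lambda$. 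So it suffices to prove these two properties for the set $\Lambda\subset\mathbb{C}$.

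I would start from the generating function of the unperturbed set $\Gamma$ of \eqref{ze}, namely $F(z):=(z^2-1)z^{-2}\sin(\pi z^2/2)$, as in \cite{ascensi2009phase}: it is entire, its zero set is exactly $\Gamma$, and $|F(z)|\asymp e^{\pi|xy|}$ off the zeros (where $z=x+iy$). Since $\pi|xy|\le\frac\pi2|z|^2$ with equality only on the diagonals $|x|=|y|$, the indicator of $F$ touches the critical Fock growth exactly on the four diagonal rays; this is why $F\notin\mathcal{F}^2$ while $F(z)/(z-\gamma)\in\mathcal{F}^2$ for each $\gamma$, i.e. why $\mathcal{G}_\Gamma$ is complete and minimal. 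Since only the points $\sqrt{2n}$ are displaced to $\lambda_n$, I would take as generating function of $\Lambda$
\[
G:=c\,F\,\Phi,\qquad \Phi(z):=\prod_{n\ge1}\frac{1-z/\lambda_n}{1-z/\sqrt{2n}},\qquad c:=\prod_{n\ge1}\frac{\sqrt{2n}}{\lambda_n}.
\]
Writing $\lambda_n=\sqrt{2n}\,e^{\delta_n}e^{i\theta_n}$, the bound $|\sqrt{2n}-\lambda_n|\le\delta/\sqrt{2n}$ gives $|\delta_n|,|\theta_n|=O(\delta/n)$; this makes $c$ and $\Phi$ converge, keeps $\Lambda$ separated (the displacement is less than half the gap when $\delta<1/2$), and the poles of $\Phi$ at $\sqrt{2n}$ cancel the corresponding zeros of $F$, so $G$ is entire with zero set exactly $\Lambda$.

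The crux is an estimate of the multiplier $\Phi$. Using $\log|(1-z/\lambda_n)/(1-z/\sqrt{2n})|\approx\delta_n\,\mathrm{Re}\,(z/(\sqrt{2n}-z))$ and summing against the counting measure of the points $a_n=\sqrt{2n}$ (whose density is $a\,da$), one is reduced to estimating sums $\sum_n\delta_n\,\mathrm{Re}\,(z/(\sqrt{2n}-z))$. I expect these to be controlled by $\delta(\Lambda)$ and $\hat\delta(\Lambda)$, yielding two-sided bounds $|z|^{-c}\lesssim|\Phi(z)|\lesssim|z|^{c}$ near the diagonal rays with $c<1/2$ precisely because $\delta<1/2$. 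Thus $G$ inherits the critical indicator $\pi|xy|$ of $F$ up to a polynomial factor of degree less than $1/2$ in modulus. Establishing this estimate uniformly in $z$ — in particular near the perturbed axis, where the individual factors are large — is the technical heart, and the step I expect to be the most delicate.

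Granting the estimate, minimality is straightforward: for any $\lambda\in\Lambda$ the function $G(z)/(z-\lambda)$ is entire, vanishes on $\Lambda\setminus\{\lambda\}$, is nonzero at $\lambda$, and satisfies $|G(z)/(z-\lambda)|\lesssim|z|^{c-1}e^{\pi|xy|}$; since $c<1/2$, the integral $\int_{\mathbb{C}}|z|^{2(c-1)}e^{-\pi(|x|-|y|)^2}\,dA(z)$ converges, so this function lies in $\mathcal{F}^2$ and $B^{-1}$ of it is the desired biorthogonal element. For completeness, let $H\in\mathcal{F}^2$ vanish on $\Lambda$; then $h:=H/G$ is entire. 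The pointwise Fock bound $|H(z)|\le\|H\|e^{\pi|z|^2/2}$ combined with $|G(z)|\gtrsim|z|^{-c}e^{\pi|xy|}$ shows that $h$ is bounded by a power of $|z|$ on each diagonal ray (where $|xy|=\tfrac12|z|^2$), while in each of the four right-angle sectors bounded by the diagonals $h$ has at most order-two growth. A Phragmén--Lindelöf argument then forces $h$ to be a polynomial of degree less than $1/2$, hence a constant; and since $G\notin\mathcal{F}^2$ (indeed $c<1/2$ gives $\int_{\mathbb{C}}|z|^{-2c}e^{-\pi(|x|-|y|)^2}\,dA(z)=\infty$) this constant must be $0$, so $H\equiv0$. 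This borderline Phragmén--Lindelöf step, at order exactly two in a sector of opening $\pi/2$, is the main obstacle, and it is exactly where the precise critical asymptotics of $G$ furnished by the product estimate are indispensable.
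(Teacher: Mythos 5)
Your setup coincides with the paper's own machinery: the transfer to the Fock space, the generating function $G_\Gamma(z)=(z^2-1)z^{-2}\sin(\pi z^2/2)$, the multiplier $\Phi=\prod_n\frac{1-z/\lambda_n}{1-z/\sqrt{2n}}$ with the bounds $|\delta_n|,|\theta_n|=O(\delta/n)$ producing a polynomial factor of exponent $c<1/2$, and the integrability criterion for minimality are exactly Lemma \ref{lem1ALS}, Lemma \ref{gen} and Lemma \ref{7.3}, together with the verification (in the paper's proof of Theorem \ref{corAALS}) that the hypotheses of Theorem \ref{thm1ALS} hold. The minimality half of your argument is therefore sound in outline. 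The gap is in the completeness half.

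You dispose of the entire factor $h=H/G$ by ``a Phragm\'en--Lindel\"of argument'' in the four diagonal sectors, and you yourself flag this as unresolved. As formulated, the step is not merely unproved but false: a sector of opening $\pi/2$ is exactly the critical opening for order-$2$ growth, and the data you feed into Phragm\'en--Lindel\"of (polynomial bounds on the four diagonal rays, order-two growth inside) do not imply that $h$ is a polynomial --- the function $h(z)=e^{\pi z^2/2}$ has modulus $1$ on all four diagonals, has order $2$, and grows like $e^{\pi|z|^2/2}$ along the real axis. To rule out such factors one must use the full interior majorant $|h(z)|\lesssim (1+|z|)^N e^{\frac{\pi}{2}(|x|-|y|)^2}$, $z=x+iy$ (not just ``order two''), show via trigonometric convexity of the order-$2$ indicator that the indicator of $h$ is $\le 0$ (the quadratic vanishing of $(|x|-|y|)^2$ at the diagonals is what rescues the borderline spacing $\pi/2$ between consecutive rays), and then invoke the zero-type refinement of Phragm\'en--Lindel\"of; none of this is in your proposal. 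The paper avoids this difficulty altogether: using Lemma \ref{sep} and the two-sided estimates of Lemma \ref{lem1ALS} it shows that $h(z)G_\Gamma(z)/\bigl((z-\gamma)P_{2M}(z)\bigr)$ lies in $\mathcal{F}$ for a suitable polynomial $P_{2M}$ with zeros in $\Gamma\setminus\{\gamma\}$, then invokes Zhu's theorem that $\Gamma\setminus\{\gamma\}$ is a \emph{maximal zero sequence} for $\mathcal{F}$ to conclude that $h$ is a polynomial of degree at most $2M$, and finally uses the weighted integrability criterion of Lemma \ref{7.3} (convergence iff $\beta>1/2$, where $2\delta<1/2$) to force first $\deg h=0$ and then $h=0$. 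Either supply the indicator/convexity argument in full, or route the completeness step through the maximal-zero-sequence theorem as the paper does; as written, the central difficulty of the proof is left open.
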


The condition $\delta<1/2$ in the last theorem is optimal. Indeed, consider the sequence
  $$\Lambda_\beta :=\Big\{\sqrt{2n+4\beta},\ -\sqrt{2n},\\ \pm i\sqrt{2n}\ :\ n\geq 1\Big\}\cup\{\pm 1\},$$
where $\beta$ is a real number. If $\beta$ is negative we replace $\sqrt{2n+4\beta}$, in the definition of $\Lambda_\beta$, by $\sqrt{2n}$ for every $1\leq n<-2\beta$. We have $\underset{n\rightarrow\infty}{\lim}\ \sqrt{2n}\left|\sqrt{2n}-\sqrt{2n+4\beta}\right|=2|\beta|$. For $\beta=-\frac{1}{4}$, so $\delta=1/2$ in this case, we can prove that $\mathcal{G}_{\Lambda_\beta}$ is not minimal in ${ L}^2(\mathbb{R})$. More precisely, we have the following result.

\begin{theorem}\label{thm2ALS}
Let $\beta\in \mathbb{R}$ be given. Then we have :
\begin{enumerate}[label=$\arabic*)$,leftmargin=* ,parsep=0cm,itemsep=0cm,topsep=0cm]
\item If $\beta \leq -1/4$, then  $\mathcal{G}_{\Lambda_\beta}$ is complete and not minimal in $ L^2(\mathbb{R})$.
\item If $-1/4 < \beta\leq 1/4$, then $\mathcal{G}_{\Lambda_\beta}$ is complete and minimal in $ L^2(\mathbb{R}).$
\item If $\beta > 1/4$, then $\mathcal{G}_{\Lambda_\beta}$ is not complete in $ L^2(\mathbb{R})$.
\end{enumerate}
\end{theorem}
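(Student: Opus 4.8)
The plan is to pass to the Bargmann--Fock space $\mathcal F$, where the Bargmann transform identifies $\mathcal G_{\Lambda_\beta}$ with the system of reproducing kernels $\{k_\lambda:\lambda\in\Lambda_\beta\}$. Thus $\mathcal G_{\Lambda_\beta}$ is complete if and only if $\Lambda_\beta$ is a uniqueness set for $\mathcal F$, and (once completeness is known, using Belov's M-basis result) it is minimal if and only if for each $\lambda_0\in\Lambda_\beta$ the set $\Lambda_\beta\setminus\{\lambda_0\}$ fails to be a uniqueness set. The natural object is the entire function whose zero set is exactly $\Lambda_\beta$. Starting from
\[
\Phi(z)=\frac{\sin(\pi z^2/2)}{\pi z^2/2},
\]
whose zeros are precisely $\{\pm\sqrt{2n},\pm i\sqrt{2n}:n\ge 1\}$, I would write the candidate as $G_\beta(z)=(1-z^2)\,\Phi(z)\,\psi_\beta(z)$, where $(1-z^2)$ installs the zeros at $\pm1$ and $\psi_\beta$ is the ratio of canonical products that moves the positive real zeros from $\sqrt{2n}$ to $\sqrt{2n+4\beta}$ while leaving $-\sqrt{2n}$ and $\pm i\sqrt{2n}$ untouched (the finitely many indices with $2n+4\beta\le 0$ being left unperturbed, which is asymptotically irrelevant).

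The technical core is an estimate of $\psi_\beta$. Since $\sqrt{2n+4\beta}-\sqrt{2n}=O(n^{-1/2})$, the difference of the two products has convergence genus zero, so no exponential correcting factor survives; a Stieltjes-type summation—equivalently, the fact that the two positive-axis counting functions differ asymptotically by the constant $-2\beta$—should yield
\[
|\psi_\beta(z)|\asymp |z|^{-2\beta}
\]
uniformly away from the positive real axis, in particular along the diagonal directions $\arg z\in\{\pi/4,3\pi/4,5\pi/4,7\pi/4\}$. Because $|(1-z^2)\Phi(z)|^2 e^{-\pi|z|^2}\asymp e^{\pi r^2(|\sin 2\theta|-1)}$ for $z=re^{i\theta}$, the membership of $G_\beta$ and of its quotients in $\mathcal F$ is decided entirely by the behaviour near these diagonals, where $|\sin 2\theta|-1$ vanishes quadratically. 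Carrying out the resulting planar integral, I expect to find that $G_\beta\in\mathcal F$ exactly when $\beta>1/4$, and that $G_\beta/(z-\lambda_0)\in\mathcal F$ exactly when $\beta>-1/4$, the extra linear factor lowering the critical exponent by one unit.

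These two thresholds produce the trichotomy. When $\beta>1/4$, the function $G_\beta$ is a nonzero element of $\mathcal F$ vanishing on all of $\Lambda_\beta$, so $\Lambda_\beta$ is not a uniqueness set and $\mathcal G_{\Lambda_\beta}$ is not complete. When $\beta\le 1/4$ one has $G_\beta\notin\mathcal F$; invoking the rigidity that any $\mathcal F$-function vanishing on $\Lambda_\beta$ is divisible by $G_\beta$ and hence must be a scalar multiple of it, one concludes that $\Lambda_\beta$ is a uniqueness set, i.e.\ completeness holds. For minimality, when $\beta>-1/4$ the function $G_\beta/(z-\lambda_0)$ lies in $\mathcal F$, vanishes on $\Lambda_\beta\setminus\{\lambda_0\}$, and is nonzero at the simple zero $\lambda_0$, producing a biorthogonal element; together with completeness this settles $-1/4<\beta\le 1/4$. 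When $\beta\le -1/4$ the same rigidity shows every candidate is a multiple of $G_\beta/(z-\lambda_0)\notin\mathcal F$, so no biorthogonal element exists at any point and the (still complete) system fails to be minimal. For the interior range $-1/4<\beta<1/4$ one may alternatively quote Theorem \ref{corAALS} directly, since there $\delta=2|\beta|<1/2$.

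The main obstacle will be the product estimate together with the two boundary cases. The bound $|\psi_\beta(z)|\asymp|z|^{-2\beta}$ must be proved with enough uniformity—with controlled error near the positive real axis and precisely at the critical exponents $\beta=\pm1/4$—for the delicate diagonal integrals to converge or diverge exactly as claimed; and the rigidity statement, that $\mathcal F$ admits no function vanishing on $\Lambda_\beta$ beyond multiples of the canonical product, must be in hand, presumably from the estimates developed for the general result.
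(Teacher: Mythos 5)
Your route is the same as the paper's: transfer to the Fock space, introduce the canonical product whose zero set is exactly $\Lambda_\beta$, prove the estimate $|G_{\Lambda_\beta}(z)|\asymp |G_\Gamma(z)|\,(1+|z|)^{-2\beta}\,\mathrm{dist}(z,\Lambda_\beta)/\mathrm{dist}(z,\Gamma)$ (this is precisely part $(2)$ of Lemma \ref{lem1ALS}), and read the trichotomy off two integrability thresholds. Your exponents are correct and agree with Lemma \ref{7.3}: the threshold $1/2$ there, applied with weight exponents $2\beta$ and $2\beta+1$, gives $G_{\Lambda_\beta}\in\mathcal F$ iff $\beta>1/4$ and $G_{\Lambda_\beta}/(z-\lambda_0)\in\mathcal F$ iff $\beta>-1/4$. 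Hence the non-completeness claim in part 3) and the construction of biorthogonal elements for $\beta>-1/4$ are sound as sketched. (Incidentally, your fallback of quoting Theorem \ref{corAALS} on the open interval needs a small repair: for $\beta$ close to $-1/4$ the hypothesis \eqref{cdALS} fails for finitely many small $n$, e.g. $n=1$ when $\beta=-0.24$, so one must first move finitely many points.)

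The genuine gap is the step you call rigidity, on which completeness in parts 1)--2) and non-minimality in part 1) entirely rest. If $F\in\mathcal F$ vanishes on $\Lambda_\beta$, then indeed $F=hG_{\Lambda_\beta}$ with $h$ entire; but ``divisible by $G_{\Lambda_\beta}$, hence a scalar multiple of it'' is a non sequitur, and it does not follow ``from the estimates'' in the way you suggest. A priori $h$ is an arbitrary entire function of order at most $2$, and your diagonal-based analysis cannot exclude, for instance, $h(z)=e^{\pm z^2}$, which has modulus exactly $1$ on all four diagonals --- precisely the set where you propose to read off all the information; off the diagonals the lower bound for $|G_{\Lambda_\beta}(z)|e^{-\frac{\pi}{2}|z|^2}$ degenerates exponentially, so no maximum-principle argument from diagonal data controls $h$. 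The paper closes this hole in the proof of Theorem \ref{thm1ALS} by a different idea: from the lower estimate one shows that $h(z)G_\Gamma(z)/\bigl((z-\gamma)P_{2M}(z)\bigr)$ belongs to $\mathcal F$ for a suitable polynomial $P_{2M}$ with zeros in $\Gamma\setminus\{\gamma\}$, and then invokes Zhu's theorem that $\Gamma\setminus\{\gamma\}$ is a \emph{maximal} zero sequence for $\mathcal F$, which forces $h$ to be a polynomial of degree at most $2M$; only after this reduction does Lemma \ref{7.3} eliminate every admissible polynomial degree (for $\beta\le 1/4$, respectively $\beta\le -1/4$ in the punctured case) and yield $h\equiv 0$. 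An alternative, which the paper uses in Proposition \ref{prop1}, is Hadamard factorization together with two-sided estimates in all directions, not only on the diagonals, to kill the factors $e^{\alpha z+\beta' z^2}$ and possible extra zeros. Your proposal contains neither ingredient, so the completeness half of the theorem remains unproven in it.
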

Theorem \ref{corAALS} is a consequence of a more general result :
\begin{theorem}\label{thm1ALS}
Let $\Lambda:=\left\{ \lambda_n,\ -\sqrt{2n},\ \pm i\sqrt{2n}\ :\ n\geq 1\right\}\cup\{\pm  1\}$ be a sequence of complex numbers such that $\lambda_{n}=\sqrt{2n}e^{\delta_n}e^{i\theta_n}$, for every $n\geq 1$. Suppose that
\begin{enumerate}[label=$\alph*)$,leftmargin=* ,parsep=0cm,itemsep=0cm,topsep=0cm]
\item\label{thm10ALS} There exists $c>0$ such that $\left|\lambda_{m}-\lambda_n\right|\geq c/\min\{\sqrt{m},\sqrt{n}\}$, for every $n,m\geq 1$.
\item\label{thm11ALS} The sequences $\left(n\delta_n\right)_{n\geq 1}\ \mbox{and}\ \left(n\theta_n\right)_{n\geq 1}$ are bounded.
\item\label{thm12ALS}{\small \begin{eqnarray}
\delta(\Lambda):= \underset{n\rightarrow\infty}{\limsup}\,\ \frac{1}{\log n}\left|\underset{k =1}{\overset{n}{\sum}}\ \delta_{k}\right|  < \frac{1}{4}. \label{deltaA}
\end{eqnarray}}
\end{enumerate}
Then $\mathcal{G}_{\Lambda}$ is complete and minimal in $ L^2(\mathbb{R})$.
\end{theorem}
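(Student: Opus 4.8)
The plan is to translate the problem into the Bargmann--Fock picture and to reduce both properties to growth estimates for a single generating function, in complete analogy with the treatment of $\Gamma_\nu$ in Theorem~\ref{thm1full}. Under the Bargmann transform the time--frequency shifts $\rho_{(x,y)}g$ of the Gaussian become constant multiples of the normalized reproducing kernels $k_w$ ($w=x+iy$) of the Fock space $\mathcal{F}^2=\{F\ \text{entire}:\ \int_{\mathbb{C}}|F(z)|^2e^{-\pi|z|^2}\,dA(z)<\infty\}$. Consequently $\mathcal{G}_\Lambda$ is complete in $L^2(\mathbb{R})$ if and only if $\Lambda$, viewed as a subset of $\mathbb{C}$, is a uniqueness set for $\mathcal{F}^2$, and $\mathcal{G}_\Lambda$ is minimal if and only if for every $\mu\in\Lambda$ there is $F\in\mathcal{F}^2$ with $F(\lambda)=0$ for all $\lambda\in\Lambda\setminus\{\mu\}$ and $F(\mu)\neq0$. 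Thus it suffices to build an entire function with simple zeros exactly on $\Lambda$ and to read off both properties from its size.

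Since $\Lambda$ differs from $\Gamma$ only along the positive real half--axis, where $\sqrt{2n}$ is replaced by $\lambda_n=\sqrt{2n}\,e^{\delta_n}e^{i\theta_n}$, I would start from the Ascensi--Lyubarskii--Seip function
\[
\Phi(z)=(1-z^2)\,\frac{2\sin(\pi z^2/2)}{\pi z^2},
\]
whose simple zeros are precisely $\Gamma$, and correct it multiplicatively by
\[
G_\Lambda(z)=\Phi(z)\,R(z),\qquad R(z)=\prod_{n\geq1}\frac{1-z/\lambda_n}{1-z/\sqrt{2n}}.
\]
Writing $\lambda_n=\sqrt{2n}\,(1+\varepsilon_n)$, hypothesis~\ref{thm11ALS} gives $|\varepsilon_n|=O(1/n)$, which makes $R$ converge locally uniformly to an entire function; hence $G_\Lambda$ is entire with simple zeros exactly on $\Lambda$, the separation~\ref{thm10ALS} keeping these zeros apart. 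The whole weight of the proof then falls on a sharp two--sided estimate of $|G_\Lambda|$. A summation by parts shows that along the positive real axis the logarithmic size of $R$ is governed by the drift,
\[
\log|R(x)|=-\sum_{k\leq x^2/2}\delta_k+O(1),\qquad x\to+\infty,
\]
and, more generally, that $|z|^{-2\delta(\Lambda)-\epsilon}\lesssim|R(z)|\lesssim|z|^{2\delta(\Lambda)+\epsilon}$ for every $\epsilon>0$ and all large $|z|$, by hypothesis~\ref{thm12ALS}. Off the coordinate axes the factor $R$ is only this polynomial correction to the dominant behaviour $|\Phi(z)|\asymp e^{\frac{\pi}{2}|z|^2|\sin(2\arg z)|}$ of $\Phi$.

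With these estimates in hand both properties fall out by comparison with the critical Gaussian weight, the binding region being the diagonals $\arg z=\pm\pi/4,\pm3\pi/4$ (on the axes the Gaussian already dominates every polynomial factor). For minimality I would check that, for each $\mu\in\Lambda$, the divided function $G_\Lambda(z)/(z-\mu)$ lies in $\mathcal{F}^2$: near the diagonals $|G_\Lambda(z)/(z-\mu)|^2e^{-\pi|z|^2}\lesssim|z|^{4\delta(\Lambda)+2\epsilon-2}e^{-\pi|z|^2(1-|\sin(2\arg z)|)}$, whose area integral converges exactly because $4\delta(\Lambda)<1$; the lower bound on $|R|$ together with~\ref{thm10ALS} shows $G_\Lambda(z)/(z-\mu)$ does not vanish at $\mu$, so these functions form the required biorthogonal family. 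For completeness one first observes, from the same estimate with the lower bound for $|R|$, that $G_\Lambda$ itself just fails to be square integrable against $e^{-\pi|z|^2}$, again precisely because $\delta(\Lambda)<1/4$. Then, assuming $F\in\mathcal{F}^2$ vanishes on all of $\Lambda$, the quotient $H=F/G_\Lambda$ is entire; the reproducing--kernel bound $|F(z)|\lesssim e^{\pi|z|^2/2}$, the lower bounds for $|G_\Lambda|$ away from its zeros, and a Phragm\'en--Lindel\"of argument in the four sectors cut out by the axes (in its borderline, order--two form) would force $|H(z)|\lesssim|z|^{2\delta(\Lambda)+\epsilon}$, so that $H$ is a polynomial of degree $<1/2$, that is a constant; but $G_\Lambda\notin\mathcal{F}^2$ then forces $H\equiv0$, whence $F\equiv0$ and $\Lambda$ is a uniqueness set.

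I expect the decisive difficulty to be the uniform two--sided control of the infinite product $R$ in \emph{every} direction, and in particular in the transition region $\sqrt{2n}\approx|z|$ where the individual factors are close to $0$; here the separation~\ref{thm10ALS} must be used to bound $R$ from below between consecutive perturbed zeros, while the accumulated phase $\sum\theta_k$ and drift $\sum\delta_k$ are kept in check only through the $\limsup$ condition~\ref{thm12ALS} and the decay $|\delta_n|,|\theta_n|=O(1/n)$ from~\ref{thm11ALS}. The second delicate point is the borderline uniqueness argument, where the order--two Phragm\'en--Lindel\"of estimate sits exactly at its critical opening and has to be fed the precise type of $F$. The constant $1/4$ is sharp because of the factor $2$ introduced by the substitution $w=z^2$ (the perturbed zeros sit at $z=\sqrt{2n}$, that is $w=2n$): it is exactly the value separating ``$G_\Lambda(z)/(z-\mu)\in\mathcal{F}^2$'' from ``$G_\Lambda\in\mathcal{F}^2$'', which is why the threshold in Theorem~\ref{thm1ALS} is half of the $\delta<1/2$ appearing in Theorem~\ref{corAALS} and matches the symmetric value $\min\{\nu,1-\nu\}/2=1/4$ of the lattice case at $\nu=1/2$.
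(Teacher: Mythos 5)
Your setup (Bargmann transform, reduction to a uniqueness-set-of-zero-excess problem), your generating function $G_\Lambda=\Phi\cdot R$ (identical to the paper's $G_\Gamma\cdot\varPsi$ in Lemma \ref{lem1ALS}), and your minimality argument (integrability of $G_\Lambda(z)/(z-\mu)$ against $e^{-\pi|z|^2}$, with the diagonals as the binding region and $4\delta(\Lambda)<1$ as the exact threshold) all match the paper's proof. The genuine problem is in the completeness half. You propose to force $|H|\lesssim|z|^{2\delta+\epsilon}$ from the diagonal bounds by ``a Phragm\'en--Lindel\"of argument in the four sectors \dots in its borderline, order-two form.'' There is no such valid form here: each sector cut out by the diagonals has opening $\pi/2$, whose critical order is exactly $2$, and inside the sector you only know $|H(z)|\lesssim e^{(\pi/2)|z|^2}$-type growth (order $2$, \emph{finite positive} type), since near the coordinate axes $|G_\Lambda(z)|e^{-\frac{\pi}{2}|z|^2}$ degenerates like $e^{-\frac{\pi}{2}(|x|-|y|)^2}$. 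Phragm\'en--Lindel\"of fails at the critical opening with positive type: $H(z)=e^{az^2}$, $a>0$, is bounded on both diagonals bounding the sector $|\arg z|\le\pi/4$ yet grows like $e^{ar^2}$ along the real axis. This is not a technicality --- quadratic exponential factors are exactly the ambiguity left open by Hadamard factorization in this order-two setting, and ruling them out requires using the $L^2$ (Fock) integrability of $F$ in the regions \emph{between} the diagonals and the axes, not just pointwise bounds on the diagonals.

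The paper closes this gap by a different mechanism. From $F=hG_\Lambda\in\mathcal{F}$, Lemma \ref{sep} (which is where the separation hypothesis \ref{thm10ALS} is actually consumed) converts the lower bounds of Lemma \ref{lem1ALS} into the statement that $z\mapsto h(z)G_\Gamma(z)/\big((z-\gamma)P_{2M}(z)\big)$ belongs to $\mathcal{F}$, for a polynomial $P_{2M}$ vanishing at $2M$ points of $\Gamma\setminus\{\gamma\}$; then Zhu's theorem that $\Gamma\setminus\{\gamma\}$ is a \emph{maximal} zero sequence for $\mathcal{F}$ forces $h$ to be a polynomial of degree at most $2M$; finally the explicit integrability computation of Lemma \ref{7.3} (finiteness iff the exponent exceeds $1/2$) is incompatible with $2\delta<1/2$ for any polynomial degree $k\ge 0$, so $h\equiv 0$. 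If you want to stay closer to your route, the repair is to invoke Hadamard factorization ($F=\frac{G_\Lambda(z)}{1}e^{\alpha z+\beta z^2}h_0$ with $h_0$ accounting for extra zeros) and kill $\alpha,\beta$ by testing Fock integrability in all four diagonal directions, as the paper itself does in the proof of Proposition \ref{prop1}; but some such global input beyond boundary bounds on the diagonals is unavoidable. A secondary inaccuracy: your claimed two-sided bound $|z|^{-2\delta-\epsilon}\lesssim|R(z)|\lesssim|z|^{2\delta+\epsilon}$ cannot hold uniformly for large $|z|$, since $R$ has poles at $\sqrt{2n}$ and zeros at $\lambda_n$; near the positive real axis the correct estimate (the paper's \eqref{theta1ALS}) carries both the ratio $\mathrm{dist}(z,\Lambda)/\mathrm{dist}(z,\Gamma)$ and a Kadets-type factor $\big((1+|z|^2)/(1+|\textmd{Im}\, z^2|)\big)^{\pm M}$, which is why the power $M$ (not $\epsilon$) must be tracked through the completeness argument.
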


Now, for perturbations of the whole sequence  let $\Lambda=\{\lambda_\gamma:=\gamma e^{\delta_\gamma}e^{i\theta_\gamma}\ :\ \gamma\in \Gamma\}$ be a sequence in the complex plane. We denote
\begin{eqnarray}
\Delta_n := \sum_{|\gamma|= \sqrt{2n}} \delta_\gamma = \delta_{\sqrt{2n}} + \delta_{-\sqrt{2n}} + \delta_{i\sqrt{2n}} + \delta_{-i\sqrt{2n}},\quad n\geq 1. \nonumber
\end{eqnarray}
\begin{theorem}\label{corGaborALS}
Let $\Gamma$ be the sequence given in \eqref{ze} and let  $\Lambda=\{\lambda_\gamma:=\gamma e^{\delta_\gamma}e^{i\theta_\gamma}\ :\ \gamma\in \Gamma\}$ be a sequence in $\mathbb{C}$. Assume that
\begin{enumerate}[label=$\alph*)$,leftmargin=* ,parsep=0cm,itemsep=0cm,topsep=0cm]
\item There exists $c>0$ such that $\left|\lambda_{\gamma}-\lambda_{\gamma'}\right|\geq c/\min\{|\gamma|,|\gamma'|\}$, for every $\gamma,\gamma'\in\Gamma$.
\item\label{corGab1ALS} The sequences $\left(\gamma^2\delta_\gamma\right)_{\gamma\in\Gamma}\ \mbox{and}\ \left(\gamma^2\theta_\gamma\right)_{\gamma\in\Gamma}$ are bounded,
\item\label{corGab2ALS}  $\delta(\Lambda):= \underset{n\rightarrow\infty}{\limsup}\,\ \frac{1}{\log n}\left|\underset{k=1 }{\overset{n}{\sum}}\ \Delta_{k}\right|  < \frac{1}{4}$.
\end{enumerate}
Then $\mathcal{G}_\Lambda$ is complete and minimal in $ L^2(\mathbb{R})$.
\end{theorem}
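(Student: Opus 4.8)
The plan is to move to the Bargmann--Fock space $\mathcal{F}$ through the Bargmann transform, under which each time-frequency shift $\rho_{(x,y)}g$ is sent, up to a unimodular factor and normalization, to the reproducing kernel of $\mathcal{F}$ at the point $z=x+iy$. In this dictionary the completeness of $\mathcal{G}_\Lambda$ in $L^2(\mathbb{R})$ is equivalent to $\Lambda$ being a uniqueness set for $\mathcal{F}$ (no nonzero element of $\mathcal{F}$ vanishes on all of $\Lambda$), and the minimality of $\mathcal{G}_\Lambda$ is equivalent to the existence, for each $\lambda_0\in\Lambda$, of a function in $\mathcal{F}$ that vanishes on $\Lambda\setminus\{\lambda_0\}$ but not at $\lambda_0$. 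Both properties are thus controlled by a single generating function --- an entire function whose zero set is exactly $\Lambda$ --- together with the comparison of its growth to the critical weight $e^{\pi|z|^2/2}$. This is the framework already used for Theorems \ref{thm1ALS} and \ref{corAALS}, and I would carry out the same program for the full perturbation.

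I would begin with the canonical generating function of the unperturbed sequence. A direct computation shows that $\Gamma$ from \eqref{ze} is precisely the zero set of
\[
G_\Gamma(z)=(z^2-1)\,\frac{\sin(\pi z^2/2)}{z^2}.
\]
Using $|\sin\zeta|\asymp e^{|\mathrm{Im}\,\zeta|}$ one finds $|G_\Gamma(z)|\asymp e^{\frac{\pi}{2}|z|^2|\sin2\phi|}$ for $z=|z|e^{i\phi}$, so the growth reaches the Fock threshold $\frac{\pi}{2}|z|^2$ exactly in the four diagonal directions $\phi\equiv\pi/4\ (\mathrm{mod}\ \pi/2)$. A short computation then gives that $G_\Gamma(z)e^{-\pi|z|^2/2}$ is bounded but not square-integrable (the obstruction being the non-decay along the diagonals), whereas dividing by $z-\gamma_0$ supplies the extra factor $|z|^{-1}$ that restores square-integrability. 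This is exactly the critical balance that makes $\mathcal{G}_\Gamma$ complete and minimal, and the whole task is to show that the perturbation $\Lambda$ preserves it.

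Next I would form the Hadamard product $G_\Lambda$ with zero set $\Lambda$ and study the quotient $\Phi:=G_\Lambda/G_\Gamma=\prod_{\gamma\in\Gamma}\frac{1-z/\lambda_\gamma}{1-z/\gamma}$ (inserting the usual convergence factors). Hypothesis (b), the boundedness of $(\gamma^2\delta_\gamma)$ and $(\gamma^2\theta_\gamma)$, makes each factor $1+O(|\gamma|^{-2})$, so $\Phi$ converges and the angular shifts $\theta_\gamma$ together with the nonlinear parts contribute only a bounded amount to $\log|\Phi|$; modulo this bounded error the effect of the perturbation reduces to that of the radial displacements $\delta_\gamma$. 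The substitution $w=z^2$ sends the four points $\pm\sqrt{2n},\pm i\sqrt{2n}$ of each circle $|\gamma|=\sqrt{2n}$ to the pair $\pm 2n$, and explains why these radial shifts aggregate into the circle-sum $\Delta_n=\sum_{|\gamma|=\sqrt{2n}}\delta_\gamma$ (the substitution organizes the computation even though the perturbed sequence need not be symmetric, so the estimate is ultimately carried out in the $z$-plane); the logarithmic-scale correction to the diagonal growth of $G_\Lambda$ relative to $G_\Gamma$ is then governed by the partial sums $\sum_{k\le n}\Delta_k$. Hypothesis (c), $\limsup_{n}\frac{1}{\log n}\bigl|\sum_{k\le n}\Delta_k\bigr|<\frac14$, keeps these partial sums inside the logarithmic window that separates ``not in $\mathcal{F}$'' from ``in $\mathcal{F}$ after one division''; concretely, it keeps $G_\Lambda(z)e^{-\pi|z|^2/2}$ out of $L^2$ (so $\Lambda$ is still a uniqueness set, giving completeness) while keeping $G_\Lambda(z)/(z-\lambda_0)$ in $\mathcal{F}$ for every $\lambda_0$ (so $\Lambda$ still admits the dual functions, giving minimality). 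The separation hypothesis (a) guarantees that every zero of $G_\Lambda$ is simple, that the dual elements are genuine, and that the product estimates are uniform.

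The step I expect to be the main obstacle is the uniform estimate of $\log|\Phi|$ in the critical diagonal directions: one must convert the one-dimensional bound on the partial sums of $\Delta_n$ into a two-dimensional bound on the growth of $G_\Lambda$ relative to the Fock weight, and do so for the four perturbed half-axes simultaneously rather than the single half-axis of Theorem \ref{thm1ALS}. The delicate points are the precise behaviour exactly where the growth equals $e^{\pi|z|^2/2}$, and the bookkeeping showing that the angular and higher-order terms controlled by (b) really are absorbed into a bounded error, so that only the aggregated radial sum $\Delta_n$ survives at the logarithmic scale. Once this estimate is secured, the constant $\frac14$ is recognized as exactly the threshold inherited from the single-axis case of Theorem \ref{thm1ALS}, and the completeness and minimality of $\mathcal{G}_\Lambda$ follow as described.
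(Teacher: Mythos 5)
Your overall framework is the same as the paper's: pass to the Fock space (Lemma \ref{00}), compare the canonical product $G_\Lambda$ with $G_\Gamma(z)=\frac{z^2-1}{\pi z^2}\sin(\tfrac{\pi}{2}z^2)$, and convert hypotheses (b), (c) into logarithmic-scale estimates of the quotient (this is Lemma \ref{lem1ALS} adapted to the full sequence, which is exactly how the paper proves Theorem \ref{corGaborALS}). The genuine gap is in your completeness step. You reduce completeness to the claim that $G_\Lambda e^{-\pi|z|^2/2}\notin L^2$, i.e.\ $G_\Lambda\notin\mathcal{F}$. That is not what uniqueness requires: a nonzero $F\in\mathcal{F}$ vanishing on $\Lambda$ has the form $F=hG_\Lambda$ with $h$ entire, and one must exclude \emph{every} such cofactor $h$, not only $h\equiv 1$. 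For this geometry the issue is not a formality: $G_\Lambda$ attains the critical growth $e^{\frac{\pi}{2}|z|^2}$ only along the diagonals and is far below it along the coordinate axes, so a factor such as $h(z)=e^{az^2}$ (which equals $1$ on the diagonals, is exponentially large on one axis and exponentially small on the other) could a priori redistribute the growth and make $hG_\Lambda$ small in $L^2(d\mu)$; ruling out precisely such exponential factors is the heart of the uniqueness proof (compare Proposition \ref{prop1}, whose whole content is a fight against factors $e^{\alpha z+\beta z^2}$ along the four directions $e^{i(\pi/4+k\pi/2)}$).

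The paper closes this gap with two ingredients absent from your proposal. First, hypothesis (a) enters not as a genericity remark but through Lemma \ref{sep}: for functions vanishing on $\Lambda$, the weight $\mathrm{dist}(z,\Gamma)^2/\mathrm{dist}(z,\Lambda)^2$ can be removed from $L^2$ integrals, which converts the pointwise bound \eqref{4.2} into the statement that $hG_\Gamma/\big((z-\gamma)P_{2M}(z)\big)$ lies in $\mathcal{F}$, where $P_{2M}$ is a polynomial vanishing at $2M$ points of $\Gamma\setminus\{\gamma\}$. Second --- the key imported theorem --- $\Gamma\setminus\{\gamma\}$ is a \emph{maximal} zero sequence for $\mathcal{F}$ (Zhu), which forces the unknown cofactor $h$ to be a polynomial of degree at most $2M$; only then does the sharp integrability threshold of Lemma \ref{7.3} (membership iff the decay exponent exceeds $1/2$), combined with $2\delta<1/2$, yield $h\equiv 0$. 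Your minimality half ($G_\Lambda/(z-\lambda_0)\in\mathcal{F}$, giving the biorthogonal functions) is correct in outline and is what the paper does via \eqref{aa}, Lemma \ref{sep} and Lemma \ref{7.3}; but without the maximal-zero-sequence/polynomial step, the completeness half does not follow from the product estimates you describe.
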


{\bf Remarks.}  Before passing to the proofs of our results, we first give some remarks.

\begin{enumerate}[label=$\arabic*.$,leftmargin=* ,parsep=0cm,itemsep=0cm,topsep=0cm]
\item For $\nu=1/2$, conditions \eqref{delta} and \eqref{deltafull} become  {\small $\underset{R\rightarrow\infty}{\limsup}\,\ \frac{1}{\log R}\left|\underset{|\gamma|\leq R }{\sum}\ \delta_{\gamma}\right|  < \frac{1}{2}$}.  From Theorems \ref{thm2} and \ref{thm2full} it is clear that this condition is optimal. Indeed, it is easy to check that $\delta(\Lambda_{\beta,\nu})= \delta(\widetilde{\Lambda}_{\beta,\nu}) = |\beta|$, and $\mathcal{G}_{{\Lambda}_{-1/2,\nu}}$ and $\mathcal{G}_{\widetilde{\Lambda}_{-1/2,\nu}}$ are not complete and minimal in $ L^2(\mathbb{R})$. On the other hand, Theorem \ref{thm2ALS} ensures that condition \ref{thm12ALS} in Theorems \ref{thm1ALS} and \ref{corGaborALS} is optimal. In fact, it is simple to see that $\delta(\Lambda_\beta)=|\beta|$ and $\mathcal{G}_{\Lambda_{-1/4}}$ is not complete and minimal in $ L^2(\mathbb{R})$.\\

\item Let $\left(\delta_n\right)$ be a sequence of real numbers. If for some positive integer $M\geq 1$, we have
\begin{eqnarray}\label{(b)}
\underset{n\geq 0}{\sup}\ \frac{n+1}{M}\left|\underset{k=n+1}{\overset{n+M}{\sum}}\delta_k\right| <\ A,
\end{eqnarray}
then {\small $\underset{n\rightarrow\infty}{\limsup}\ \frac{1}{\log n}\ \left|\underset{k=1}{\overset{n}{\sum}}\ \delta_k\right|  <A$}. Analogously, if 
{\small
\begin{eqnarray}\label{(bb)}
\underset{n\geq 0}{\sup}\ \frac{n+1}{M}\left|\underset{n+1\leq|\gamma|\leq n+M}{\sum}\delta_\gamma\right|<A,
\end{eqnarray}} for some integer $M\geq 1$,
then {\small $ \underset{R\rightarrow\infty}{\limsup}\  \frac{1}{\log R}\left|{\underset{|\gamma|\leq R}{\sum}}\ \delta_{\gamma}\right|<A$.} The converse is not true (see Lemma \ref{equiv}).\\
\item In the case $M=1$, condition \eqref{(b)} is similar to the Kadets and Ingham 1/4 theorem dealing with the  stability problem of Riesz bases of complex exponentials for the Hilbert space $ L^2(-\pi,\pi)$ (see, e.g. \cite{hruvsvcev1981unconditional,kadets1964exact,levin1996lectures,seip2004interpolation}). Also, for arbitrary integer $M\geq 1$, Theorems \ref{thm1} and \ref{thm1ALS} with \eqref{(b)} and Theorem \ref{thm1full} with \eqref{(bb)} give results similar to the Avdonin theorem (see, e.g. \cite{avdonin1974question,baranov2015sampling}).\\
\end{enumerate}

{ The plan of the paper is as follows. In the next section, we present a link between the completeness and minimality nature of $\mathcal{G}_\Gamma$ and a uniqueness problem in the Bargmann-Fock space. In section \ref{aaa}, we obtain upper and lower estimates of some modified Weierstrass infinite products. Section \ref{coro-proof} is devoted to proving our main results. In the last section, we discuss the optimality of our conditions and the relations between conditions \eqref{delta} and \eqref{(b)}. \\}

Throughout this paper, the notation $U(x)\lesssim V(x)$ means that there exists a constant $C>0$ such that $0\leq U(x)\leq CV(x)$ holds for every $x$ in the set in question. We write $U(x)\asymp V(x)$ if both $U(x)\lesssim V(x)$ and $V(x)\lesssim U(x)$ hold simultaneously. \\

\section{Transition to the Fock space}

Throughout this paper, we denote  $d\mu(z):=e^{-\pi|z|^2}dA(z)$, where $dA(z)$ is Lebesgue area measure in the complex plane $\mathbb{C}$. The Bargmann-Fock space $\mathcal{F}$ consists  of all entire functions $f$ satisfying
\begin{eqnarray}
\|f\|_\mathcal{F}^2\ :=\ \int_\mathbb{C}\left|f(z)\right|^2\ d\mu(z)\ <\ \infty. \nonumber
\end{eqnarray}
The space $\mathcal{F}$ is a Hilbert space with the reproducing kernel  $$\mathrm{k}_{z}(w)=e^{\pi \overline{z}w},\quad z,w\in\mathbb{C}.$$ It is well-known that the Bargmann transform given by
\begin{eqnarray}
\mathcal{B} f(z)\ :=\ 2^{1/4}e^{-\frac{\pi}{2}z^2}\int_\mathbb{R} f(x)e^{2\pi xz-\pi x^2}\,dx,\nonumber
\end{eqnarray}
maps $ L^2(\mathbb{R})$ isometrically onto $\mathcal{F}$, see \cite[Theorem 6.8]{zhu2012analysis}.  Moreover, the time–frequency shifts of the Gaussian are mapped to the normalized reproducing kernels $\Bbbk_{z}:=\mathrm{k}_{z}/\left\|\mathrm{k}_{z}\right\|$ of $\mathcal{F}$, namely
\begin{eqnarray}\label{Bargmann}
2^{1/4}\mathcal{B}\left(e^{-i\pi \textmd{Re}(z)\textmd{Im}(z)}\rho_z(e^{-\pi x^2})\right) = \Bbbk_{\overline{z}},\quad z\in\mathbb{C}.
\end{eqnarray}
For more informations on the Bargmann transform we refer the reader  to \cite{grochenig2013foundations,zhu2012analysis}.\\

We recall that a sequence $\Lambda\subset\mathbb{C}$ of distinct points is said to be \textit{a set of uniqueness} for $\mathcal{F}$ if the unique function in $\mathcal{F}$ vanishing on $\Lambda$ is the zero function. We say also that a countable set $\Lambda\subset\mathbb{C}$ is \textit{a zero set} for $\mathcal{F}$ whenever there exists a non identically zero function $f\in\mathcal{F}$  such that $\Lambda$ is exactly the zero set of $f$, counting the multiplicities. \\

Following \cite{ascensi2009phase}, a sequence $\Lambda$ is called \textit{a set of uniqueness of zero excess} for $\mathcal{F}$ if it is a set of uniqueness for $\mathcal{F}$ and when we remove any point of $\Lambda$, it becomes a zero set for $\mathcal{F}$.\\

The formula \eqref{Bargmann} and a standard duality argument give the following result 
\begin{lemma}\label{00}
A system $\mathcal{G}_\Lambda$ is complete and minimal in $ L^2(\mathbb{R})$ if and only if the system $\left\{\Bbbk_\lambda\right\}_{\lambda\in\Lambda}$ is complete and minimal in $\mathcal{F}$, that is, if and only if the sequence $\Lambda$ is a uniqueness set of zero excess for $\mathcal{F}$.
\end{lemma}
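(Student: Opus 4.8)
The plan is to prove Lemma \ref{00} by a chain of equivalences, passing from the Gabor system in $L^2(\mathbb{R})$ to the normalized reproducing kernels in $\mathcal{F}$ via the Bargmann transform, and then interpreting completeness and minimality of a kernel system as the uniqueness-of-zero-excess property. First I would record that $\mathcal{B}$ is a unitary isomorphism of $L^2(\mathbb{R})$ onto $\mathcal{F}$, so it preserves all the relevant functional-analytic properties: a system is complete (respectively minimal) in $L^2(\mathbb{R})$ if and only if its image under $\mathcal{B}$ is complete (respectively minimal) in $\mathcal{F}$. Using formula \eqref{Bargmann}, which says that each time–frequency shift $\rho_z g$ of the Gaussian is sent, up to a unimodular scalar and a harmless positive constant, to the normalized reproducing kernel $\Bbbk_{\overline{z}}$, I would note that multiplying a vector in a system by a nonzero scalar changes neither completeness nor minimality. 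Hence $\mathcal{G}_\Lambda$ is complete and minimal in $L^2(\mathbb{R})$ if and only if $\{\Bbbk_{\overline\lambda}\}_{\lambda\in\Lambda}$ is complete and minimal in $\mathcal{F}$; since $\Lambda\mapsto\overline\Lambda$ is a bijection onto another sequence to which the same statement applies, I would phrase the conclusion directly for $\{\Bbbk_\lambda\}_{\lambda\in\Lambda}$.

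Next I would translate the two spectral properties into statements about zero sets. For \emph{completeness}, the orthogonal complement of $\{\Bbbk_\lambda\}$ is trivial exactly when no nonzero $f\in\mathcal{F}$ satisfies $\langle f,\Bbbk_\lambda\rangle=0$ for all $\lambda$. By the reproducing property, $\langle f,\mathrm{k}_\lambda\rangle=f(\lambda)$, and since $\Bbbk_\lambda$ differs from $\mathrm{k}_\lambda$ only by a positive normalizing factor, $\langle f,\Bbbk_\lambda\rangle=0$ is equivalent to $f(\lambda)=0$. Thus $\{\Bbbk_\lambda\}$ is complete in $\mathcal{F}$ precisely when the only $f\in\mathcal{F}$ vanishing on all of $\Lambda$ is $f\equiv 0$, i.e. $\Lambda$ is a set of uniqueness for $\mathcal{F}$. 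For \emph{minimality}, the system $\{\Bbbk_\lambda\}$ is minimal if and only if each $\Bbbk_{\lambda_0}$ lies outside the closed span of the remaining kernels; equivalently, for each $\lambda_0\in\Lambda$ there exists $f\in\mathcal{F}$ with $f(\lambda)=0$ for all $\lambda\in\Lambda\setminus\{\lambda_0\}$ but $f(\lambda_0)\neq 0$. Such an $f$ is a nonzero element of $\mathcal{F}$ vanishing exactly on $\Lambda\setminus\{\lambda_0\}$ (and not at $\lambda_0$), which is precisely the assertion that $\Lambda\setminus\{\lambda_0\}$ is a zero set for $\mathcal{F}$.

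Combining the two translations yields the claim: $\mathcal{G}_\Lambda$ is complete and minimal if and only if $\Lambda$ is a set of uniqueness for $\mathcal{F}$ and, upon deleting any single point, it becomes a zero set, which is exactly the definition of a set of uniqueness of zero excess. I would be slightly careful about multiplicities, remarking that when the points of $\Lambda$ are distinct (as is assumed throughout), vanishing on a point counts with multiplicity one, so the minimality condition matches the simple-zero interpolation statement cleanly.

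The main obstacle, such as it is, lies in the minimality direction rather than completeness. I expect the delicate point to be the standard duality argument alluded to in the excerpt: establishing that the biorthogonal functional witnessing minimality corresponds to an actual function in $\mathcal{F}$ vanishing on $\Lambda\setminus\{\lambda_0\}$. One must verify that the existence of a bounded linear functional separating $\Bbbk_{\lambda_0}$ from the closed span of the others is equivalent, via the Riesz representation in the Hilbert space $\mathcal{F}$, to the existence of the desired interpolating entire function; this is where the reproducing-kernel structure does the essential work, since it lets us identify annihilating functionals with point evaluations and thereby convert an abstract separation statement into a concrete vanishing/non-vanishing condition. Once this identification is in place, the equivalence with the zero-excess property is immediate from the definitions recalled just above the lemma.
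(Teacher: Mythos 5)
Your reduction via the Bargmann transform is sound: unitarity of $\mathcal{B}$, formula \eqref{Bargmann}, the invariance of completeness and minimality under multiplication of each vector by a nonzero scalar, and the conjugation symmetry $f(z)\mapsto\overline{f(\overline z)}$ of $\mathcal{F}$ correctly handle the passage from $\mathcal{G}_\Lambda$ to $\left\{\Bbbk_\lambda\right\}_{\lambda\in\Lambda}$, and the identification of completeness with $\Lambda$ being a uniqueness set is correct. (The paper itself does not write out a proof; it defers to Ascensi--Lyubarskii--Seip and Belov, whose argument is essentially the one you outline.) The genuine gap is in your treatment of minimality. The Riesz-representation step gives you, for each $\lambda_0$, a nonzero $f\in\mathcal{F}$ with $f=0$ on $\Lambda\setminus\{\lambda_0\}$ and $f(\lambda_0)\neq 0$; but you then assert that such an $f$ vanishes \emph{exactly} on $\Lambda\setminus\{\lambda_0\}$, which does not follow. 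The witness may have additional zeros off $\Lambda$, or zeros of multiplicity $\geq 2$ at points of $\Lambda\setminus\{\lambda_0\}$, whereas the paper's definition of a zero set demands that the zero set of $f$ be exactly $\Lambda\setminus\{\lambda_0\}$, counting multiplicities. Your closing remark about multiplicities (distinctness of the points of $\Lambda$) does not address this: the issue is excess zeros of the witness $f$, not repetitions inside $\Lambda$. Minimality alone is therefore a priori weaker than the assertion that $\Lambda\setminus\{\lambda_0\}$ is a zero set --- this discrepancy is precisely the ``excess'' that gives the notion its name.

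The gap closes only when minimality is combined with completeness, via the division property of the Fock space: if $f\in\mathcal{F}$ and $f(w)=0$, then $f/(\cdot-w)\in\mathcal{F}$, and hence
\[
\frac{z-\lambda_0}{z-w}\,f(z)\;=\;f(z)+(w-\lambda_0)\,\frac{f(z)}{z-w}\;\in\;\mathcal{F}.
\]
Suppose the minimality witness $f$ for $\lambda_0$ had an excess zero $w$ (either $w\notin\Lambda\setminus\{\lambda_0\}$, or a zero of multiplicity $\geq 2$ at a point of $\Lambda\setminus\{\lambda_0\}$; note $w\neq\lambda_0$ since $f(\lambda_0)\neq 0$). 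Then the function displayed above is a nonzero element of $\mathcal{F}$ vanishing on all of $\Lambda$: the factor $z-\lambda_0$ restores the deleted point, and dividing by $z-w$ does not destroy any zero required on $\Lambda\setminus\{\lambda_0\}$. This contradicts completeness, i.e.\ the fact that $\Lambda$ is a uniqueness set. Hence $\mathrm{Z}(f)=\Lambda\setminus\{\lambda_0\}$ exactly, and only at this point is the zero-excess property established. Without this step, your argument proves the equivalence of ``complete and minimal'' with the formally weaker property ``$\Lambda$ is a uniqueness set and for every $\lambda_0$ some Fock function vanishes on $\Lambda\setminus\{\lambda_0\}$ but not at $\lambda_0$,'' not with the statement of the lemma.
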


We refer to \cite{ascensi2009phase,belov2015uniqueness} for the proof of the above lemma. It follows that the sequences
\begin{equation}\label{zed}
\Gamma:=\left\{\pm\sqrt{2n},\ \pm i\sqrt{2n}\ :\ n\geq 1\right\} \cup\left\{\pm  1\right\} = \{\gamma\}_{\gamma\in\Gamma}
\end{equation}
and \begin{equation}\label{llambda_nu}
\Gamma_\nu=\Big\{\gamma_{m,n}:=m+in\ :\  (m,n)\in\mathbb{Z}\times\left(\mathbb{Z}\setminus\{0\}\right)\Big\}\cup\mathbb{Z}_{-}\cup\{m+\nu\}_{m\geq 0} =\{\gamma\}_{\gamma\in\Gamma_\nu}
\end{equation}
are uniqueness sets of zero excess for $\mathcal{F}$.

\section{Key Lemmas}\label{aaa}
In this section, we introduce certain modified Weierstrass products and prove some estimates of these functions which will play a crucial role in the proof of our results. We first {consider the sequences
$$\Gamma_{\nu}:=\Big\{\gamma_{m,n}:=m+in\ :\ (m,n)\in\mathbb{Z}\times\left(\mathbb{Z}\setminus\{0\}\right)\Big\}\cup\mathbb{Z}_{-}\cup\{m+\nu\}_{m\geq 0},$$
where $\nu$ is a real number in the interval $(0,1]$. Now, let $N$ be a positive integer and let $\Lambda_N:=\{\lambda_{m,n}\ :\ m\in\mathbb{Z},\ 1\leq n\leq N\}$
be a sequence of complex numbers. We write $\lambda_{m,n}=\gamma_{m,n}e^{\delta_{m,n}}e^{i\theta_{m,n}}$, for every $m\in\mathbb{Z}$ and $1\leq n\leq N$, where $\delta_{m,n},\theta_{m,n}\in\mathbb{R}.$ Let 
$$\Lambda:=\Big\{\gamma_{m,n}\in\Gamma_\nu\ :\ (m,n)\in\mathbb{Z}\times\big(\mathbb{Z}\setminus\{1\leq n\leq N\}\big) \Big\}\cup\Lambda_N\ =\ \{\lambda_{m,n}\}_{(m,n)\in\mathbb{Z}^2}.$$ We associate with $\Lambda$  the following infinite product
{\small\[
G_\Lambda(z) := (z-\lambda_{0,0})\underset{m,n\in\mathbb{Z}}{{\prod\ }^{\prime}} \left(1-\frac{z}{\lambda_{m,n}}\right)\exp\left[\frac{z}{\gamma_{m,n}}+\frac{z^2}{2\gamma_{m,n}^2}\right],\quad z\in\mathbb{C}.
\]}
The product (with a prime) is taken over all integers $m$ and $n$ with $(m,n)\neq(0,0)$. The following lemma gives some estimates on $G_\Lambda$.

\begin{lemma}\label{lem1}
$(1)$ Let $\Lambda$ satisfy the conditions of Theorem \ref{thm1}. The infinite product $G_\Lambda$ converges uniformly on every compact set of $\mathbb{C}$ and satisfies the  estimates
{\small
\begin{equation}
\frac{(1+|\textmd{Im} z|)^M}{(1+|z|)^{\nu-\hat{\delta}+M}}\ \mathrm{dist}(z,\Lambda)\ \lesssim\ \left|G_\Lambda(z)\right|e^{-\frac{\pi}{2}|z|^2}\ \lesssim\ \frac{(1+|z|)^{\delta-\nu+M}}{(1+|\textmd{Im} z|)^M} \ \mathrm{dist}(z,\Lambda),\quad z\in\mathbb{C},\nonumber
\end{equation}}
for some positive constant $M$, where $\delta=\delta(\Lambda)+\varepsilon$ and $\hat{\delta}=\hat{\delta}(\Lambda)-\varepsilon$, for some small positive $\varepsilon$.\\
$(2)$ The function $G_{\Lambda_{\beta,\nu}}$ is holomorphic  in $\mathbb{C}$ and satisfies the estimate
{\small\begin{equation}
\left|G_{\Lambda_{\beta,\nu}}(z)\right|e^{-\frac{\pi}{2}|z|^2}\ \asymp\ \frac{\mathrm{dist}(z,\Lambda_{\beta,\nu})}{(1+|z|)^{\nu+2\beta}},\quad z\in\mathbb{C},\nonumber
\end{equation}}
where $\Lambda_{\beta,\nu}:=\Big\{\gamma_{m,n}\in\Gamma_\nu\ :\ (m,n)\in\mathbb{Z}\times\big(\mathbb{Z}\setminus\{1\leq n\leq N\}\big) \Big\}\cup\Lambda_{\beta,N}$ and 
$\Lambda_{\beta,N} :=\left\{m+\mathrm{sign}(m)\frac{\beta}{N}+i n\ :\ m\in\mathbb{Z},\ 1\leq n\leq N\right\}.$
\end{lemma}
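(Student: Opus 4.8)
\emph{Plan.} The strategy is to factor $G_\Lambda$ into three pieces whose contributions can be estimated separately: a canonical product over the full square lattice $\mathbb{Z}+i\mathbb{Z}$, a factor accounting for the $\nu$-shift of the non-negative real half-line, and a ratio factor carrying the strip perturbation $1\le n\le N$. Writing
$$G_\Lambda(z) \ =\ G_{\Gamma_\nu}(z)\cdot\prod_{\substack{m\in\mathbb{Z}\\ 1\le n\le N}}\frac{1-z/\lambda_{m,n}}{1-z/\gamma_{m,n}} \ =:\ G_{\Gamma_\nu}(z)\,R(z),$$
I would first observe that the Weierstrass exponential factors cancel in $R$ because they depend only on the unperturbed nodes $\gamma_{m,n}$; this is exactly why the product was normalized with $\gamma_{m,n}$ and not $\lambda_{m,n}$ in the exponentials. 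The baseline estimate for $G_{\Gamma_\nu}$ is classical: the canonical product over $\mathbb{Z}+i\mathbb{Z}$ has modulus $\asymp e^{\frac{\pi}{2}|z|^2}\,\mathrm{dist}(z,\mathbb{Z}+i\mathbb{Z})$ by the quasi-periodicity of the associated theta function, and replacing $\{m:\ m\ge 0\}$ by $\{m+\nu:\ m\ge 0\}$ amounts to multiplying by a ratio of shifted products that telescopes into a quotient of Gamma functions. Stirling's formula then gives
$$|G_{\Gamma_\nu}(z)|\,e^{-\frac{\pi}{2}|z|^2}\ \asymp\ \frac{\mathrm{dist}(z,\Gamma_\nu)}{(1+|z|)^{\nu}},\qquad z\in\mathbb{C},$$
which already produces the power $\nu$ occurring in both exponents of the lemma.

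\emph{The ratio factor.} I would first settle convergence of $R$: writing each factor as $e^{-\delta_{m,n}-i\theta_{m,n}}\,\frac{\lambda_{m,n}-z}{\gamma_{m,n}-z}$ and using hypothesis \ref{thm11} (boundedness of $m\delta_{m,n}$ and $m\theta_{m,n}$), one gets $|\lambda_{m,n}-\gamma_{m,n}|\lesssim 1/|m|$, so each factor equals $1+O(z/m^2)$ and $R$ converges uniformly on compacta, whence $G_\Lambda$ is holomorphic. The heart of the matter is the size of $|R(z)|$. The plan is to group the product by rows $n$ and, for each fixed $n$, compare the row sum $\sum_m\log\bigl|\frac{\lambda_{m,n}-z}{\gamma_{m,n}-z}\bigr|$ to a Riemann integral whose leading part is governed by the $\log|\sin|$-type behaviour along the line $\mathrm{Im}\,w=n$ and partially cancels between perturbed and unperturbed nodes. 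What survives is controlled by the truncated sums $\sum_{|m|\lesssim|z|}\delta_{m,n}$; the log-averaged quantities $\delta(\Lambda)$ and $\hat{\delta}(\Lambda)$ convert these, up to an arbitrarily small $\varepsilon$, into the exponents $\delta-\nu$ and $\nu-\hat{\delta}$. The uniform bound in \ref{thm11} then absorbs the oscillation of these partial sums, together with the dependence on the distance to the real axis, into a fixed power $M$ and the compensating factors $(1+|\mathrm{Im}\,z|)^{\pm M}$. Combining with the estimate for $G_{\Gamma_\nu}$, and noting that $\mathrm{dist}(z,\Lambda)\asymp\mathrm{dist}(z,\Gamma_\nu)$ away from the finite strip while near it the zeros of $R$ exactly reproduce $\mathrm{dist}(z,\Lambda)$, yields the two-sided bound of part~(1).

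\emph{The main obstacle} is precisely this extraction of the exponents $\delta-\nu$ and $\nu-\hat{\delta}$ from $R$: individually the sums $\sum_m\delta_{m,n}$ and $\sum_m\log\bigl|\frac{\lambda_{m,n}-z}{\gamma_{m,n}-z}\bigr|$ need not converge, and only their combination does, so the row-by-row integral comparison must be carried out keeping careful track of the cancellation and of the location of $z$ — in particular of the ratio $(1+|z|)/(1+|\mathrm{Im}\,z|)$. Controlling the fluctuation \emph{uniformly} in $z$, rather than merely in the log-average, is exactly what forces the loss encoded in the power $M$.

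\emph{Part (2).} For the explicit sequence $\Lambda_{\beta,\nu}$ the perturbation is a rigid horizontal shift of each row, $\lambda_{m,n}=m+\mathrm{sign}(m)\frac{\beta}{N}+in$, so $R$ telescopes cleanly along each half-line into a quotient of Gamma functions, with no oscillating remainder. Applying Stirling to each of the $N$ rows and to the two half-lines $m>0$ and $m<0$ produces the single clean power $(1+|z|)^{-2\beta}$; multiplying by the factor $(1+|z|)^{-\nu}$ coming from $G_{\Gamma_\nu}$ gives the exponent $\nu+2\beta$. Because the shift is exact there is no fluctuation to absorb, so neither the power $M$ nor the factors $(1+|\mathrm{Im}\,z|)^{\pm M}$ appear, and one obtains the sharp relation $|G_{\Lambda_{\beta,\nu}}(z)|\,e^{-\frac{\pi}{2}|z|^2}\asymp \mathrm{dist}(z,\Lambda_{\beta,\nu})\,(1+|z|)^{-(\nu+2\beta)}$.
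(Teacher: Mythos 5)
Your plan is, in substance, the paper's own proof. The paper factors $G_\Lambda$ as the Weierstrass $\sigma$-function of the full lattice times a ratio product (so the $\nu$-shifted row $n=0$ sits inside the ratio and contributes $\sum_{1\le m\le 2|z|}\log\frac{m}{m+\nu}=-\nu\log|z|+O(1)$), then splits the ratio at $|m|\asymp 2|z|$: the far part is $\asymp 1$, the modulus ratios $|\gamma_{m,n}\gamma_{-m,n}/\lambda_{m,n}\lambda_{-m,n}|$ produce the partial sums of $\delta_{m,n}$ and hence the exponents $\delta-\nu$ and $\nu-\hat\delta$, the indices nearest to $\textmd{Re}\,z$ produce $\mathrm{dist}(z,\Lambda)/\mathrm{dist}(z,\Gamma_\nu)$, and the remaining sums are bounded by $j\sum_m |u-m|/|z-\gamma_{m,n}|^2\asymp M\log\frac{|z|}{1+|\textmd{Im} z|}$, where $j$ bounds the additive displacements. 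That is exactly your scheme, the only differences being that you take $G_{\Gamma_\nu}$ itself as baseline (estimated via theta quasi-periodicity and Gamma quotients), and that in part (2) you telescope the rigid shift into Gamma-function quotients and apply Stirling, whereas the paper redoes the split sums and checks by hand that the two half-sums on either side of $\textmd{Re}\,z$ cancel to $O(1)$ (its display \eqref{key4}). Your Gamma route for part (2) is sound and arguably cleaner; both rest on the observation you correctly isolate, that an exact shift produces cancellation and hence no $M$-loss.

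One error to fix before writing this up: condition $b)$ of Theorem \ref{thm1} gives $|\delta_{m,n}|+|\theta_{m,n}|\lesssim 1/|m|$, hence
\begin{equation}
\left|\lambda_{m,n}-\gamma_{m,n}\right|\ =\ |\gamma_{m,n}|\left|e^{\delta_{m,n}+i\theta_{m,n}}-1\right|\ \lesssim\ 1,\nonumber
\end{equation}
\emph{not} $\lesssim 1/|m|$ as you claim: a multiplicative perturbation $e^{O(1/m)}$ of a point of modulus $\asymp|m|$ is an additive displacement of size $O(1)$. This matters conceptually, because if the displacements really were $O(1/|m|)$ then $|\delta_{m,n}|\lesssim 1/m^2$ would be summable, forcing $\delta(\Lambda)=\hat\delta(\Lambda)=0$ and making the power $M$ unnecessary — the lemma would be nearly vacuous in that regime. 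Fortunately the one conclusion you draw from the wrong bound survives under the correct one: for $|m|\geq 2|z|$ each factor of $R$ equals $1+\frac{z(\lambda_{m,n}-\gamma_{m,n})}{\lambda_{m,n}(\gamma_{m,n}-z)}=1+O\big(|z|/m^2\big)$, which is summable, so the convergence of $R$ and the entirety of $G_\Lambda$ stand; and the rest of your sketch correctly treats the displacements as merely bounded (that bound is precisely the paper's integer $j$, the source of the exponent $M$).
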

\begin{proof}

To prove that $G_\Lambda$ is convergent, it suffices to prove that the series
{\small\begin{eqnarray}\nonumber
\underset{m}{\sum} \log\left|\left(1-\frac{z}{\lambda_{m,n}}\right)e^{\frac{z}{\gamma_{m,n}}+\frac{z^2}{2\gamma_{m,n}^2}}\right|
\end{eqnarray}}
converges uniformly on every compact set of $\mathbb{C}$, for each $1\leq n\leq N$. To this end, fix $r>0$ and $|z|\leq r$. For sufficiently large $m$ we have
\begin{align}
\left|\log\left[\left(1-\frac{z}{\lambda_{m,n}}\right)e^{\frac{z}{\gamma_{m,n}}+\frac{z^2}{2\gamma_{m,n}^2}}\right]\right| & \leq \left| -\frac{z}{\lambda_{m,n}}+\frac{z}{\gamma_{m,n}}\right|+O\left(\frac{|z|^2}{m^2}\right) \nonumber\\
 & \lesssim  |z|\frac{|\delta_{m,n}|+|\theta_{m,n}|}{m}+O\left(\frac{|z|^2}{m^{2}}\right).\nonumber 
\end{align}
The latter inequality together with conditions of Theorem \ref{thm1} imply that $G_\Lambda$ is an entire function and vanishes exactly on $\Lambda$.\\

Now, we estimate the function $G_\Lambda$. To do this, write
\begin{align}
G_\Lambda(z) & = \sigma(z)\frac{z-\gamma_{0,0}}{z}\ \underset{m\in\mathbb{Z}, \ 0\leq n\leq N}{{\prod\ } ^{\prime}} \frac{1-z/\lambda_{m,n}}{1-z/\gamma_{m,n}},\quad z\in\mathbb{C}\setminus \Gamma_N,  \nonumber
\end{align}
where $\Gamma_N:=\{\gamma_{m,n}\in\Gamma_\nu\ :\ (m,n)\in\mathbb{Z}\times\{0\leq n\leq N\} \}$. Taking into account the estimate of the Weierstrass $\sigma-$function, it suffices to prove that there exists a positive number $M$ such that the following estimates hold :
\begin{eqnarray}
\frac{(1+|\textmd{Im} z|)^M}{(1+|z|)^{M+\nu-\hat{\delta}}}\ \lesssim\ \frac{\mathrm{dist}(z,\Gamma_0)}{\mathrm{dist}(z,\Lambda)}\underset{m\in\mathbb{Z}, \ 0\leq n\leq N}{{\prod\ }^{\prime}}\left|\frac{1-z/\lambda_{m,n}}{1-z/\gamma_{m,n}}\right|\lesssim \frac{(1+|z|)^{M-\nu+\delta}}{(1+|\textmd{Im} z|)^M}.\nonumber
\end{eqnarray}
{First of all, it is clear that 
\begin{equation}
\prod_{0\leq n\leq N} \left|\frac{1-z/\lambda_{0,n}}{1-z/\gamma_{0,n}}\right|\ \asymp\ 1,\quad |z|\rightarrow\infty.
\end{equation}}
Now, set
\begin{equation}
\Theta(z):=\underset{m\geq 1, \ 0\leq n\leq N}{\prod}\frac{(1-z/\lambda_{m,n})(1-z/\lambda_{-m,n})}{(1-z/\gamma_{m,n})(1-z/\gamma_{-m,n})}=\Theta_1(z)\Theta_2(z),\quad z\in\mathbb{C}\setminus \Gamma_N, 
\label{Estim2}
\end{equation}
where
{\small
\begin{align}
\Theta_{1}(z) =  \underset{1\leq m\leq 2|z|,\ 0\leq n\leq N}{\prod}\frac{\gamma_{m,n}\gamma_{-m,n}}{\lambda_{m,n}\lambda_{-m,n}}\ \frac{(z-\lambda_{m,n})(z-\lambda_{-m,n})}{(z-\gamma_{m,n})(z-\gamma_{-m,n})},\nonumber
\end{align}}
and
\begin{eqnarray}
\Theta_{2}(z) =\underset{|m|> 2|z|,\ 0\leq n\leq N}{\prod} \frac{1-z/\lambda_{m,n}}{1-z/\gamma_{m,n}}. \end{eqnarray}
{\bf $\bullet$ Estimates of $\Theta_1$.} Let $\delta=\delta(\Lambda)+\varepsilon$ and $\hat{\delta}=\hat{\delta}(\Lambda)-\varepsilon$, where $\varepsilon$ is a small positive real number. We have
\begin{align*}
 \underset{0\leq n\leq N}{\underset{1\leq m\leq 2|z|}{\sum}}\log\left|\frac{\gamma_{m,n}\gamma_{-m,n}}{\lambda_{m,n}\lambda_{-m,n}}\right| & = \underset{1\leq m\leq 2|z|}{\sum} \log\frac{m}{m+\nu} + \underset{1\leq m\leq 2|z|,\ 1\leq n\leq N}{\sum} (\delta_{m,n}+\delta_{-m,n}) \\
     & =  -\nu\log|z|+ \underset{1\leq m\leq 2|z|,\ 1\leq n\leq N}{\sum} (\delta_{m,n}+\delta_{-m,n}) + O(1).
\end{align*}
By the definition of $\delta(\Lambda)$ and $\hat{\delta}(\Lambda)$, we have
{\small\begin{align}\label{11}
(-\nu+\hat{\delta})\log|z|+O(1)\ \leq\ \underset{1\leq m\leq 2|z|,\ 0\leq n\leq N}{\sum}\log\left|\frac{\gamma_{m,n}\gamma_{-m,n}}{\lambda_{m,n}\lambda_{-m,n}}\right|\ \leq\ (-\nu+\delta)\log|z| + O(1).
\end{align}}
Suppose that $z\in\mathbb{C}^+:=\{z\in\mathbb{C}\ :\ \textmd{Re} z\geq 0\}$. We use that
\begin{eqnarray}\label{log}
\log|1+z|=\textmd{Re} z+O(|z|^2),\quad \textmd{Re}(z)\geq -\frac{1}{2},\ |z|<1.
\end{eqnarray}
Now, for fixed $1\leq n\leq N$, we have
{\small\begin{align}
\left|\log\underset{1\leq m\leq 2|z|}{\prod}\left| \frac{z-\lambda_{-m,n}}{z-\gamma_{-m,n}}\right|\right| 
& = \left|\sum_{m\in J} \log\left|1+\frac{\gamma_{m,n}-\gamma_{m,n}e^{\delta_{m,n}}e^{i\theta_{m,n}}}{z-\gamma_{m,n}}\right|\right| \nonumber \\
  & \leq \sum_{m\in J} \left|\frac{\gamma_{m,n}-\gamma_{m,n}e^{\delta_{m,n}}e^{i\theta_{m,n}}}{z-\gamma_{m,n}}\right| + O(1) \nonumber \\
  & \asymp \sum_{m\in J} |m|\frac{\left|\delta_{m,n}\right|+\left|\theta_{m,n}\right|}{|z|} + O(1) = O(1) \nonumber
\end{align}}
where $J:=\{m\in\mathbb{Z}\ :\ -2|z|\leq m\leq -1\}$. Also, we have
$
\underset{1\leq m<2|z|}{\prod}\left|\frac{z-\lambda_{-m,0}}{z-\gamma_{-m,0}}\right| = 1.
$
Therefore, 
\begin{eqnarray}\label{4}
\underset{1\leq m<2|z|,\ 0\leq n\leq N}{\prod}\left|\frac{z-\lambda_{-m,n}}{z-\gamma_{-m,n}}\right|\ \asymp\ 1.
\end{eqnarray}
Now, since the sequences $(m\delta_{m,n})$ and $(m\theta_{m,n})$ are bounded, it follows that $\gamma_{m,n}-\gamma_{m,n}e^{\delta_{m,n}}e^{i\theta_{m,n}}$ is bounded too. Let $j$ be an integer such that $\left|\gamma_{m,n}-\gamma_{m,n}e^{\delta_{m,n}}e^{i\theta_{m,n}}\right|\leq j$. We next write $z=u+iv$. In what follows, we will divide $K:=\{m\in\mathbb{Z}\ :\ 1\leq m\leq 2|z|\}$ into three parts $K=K_0\cup K_1\cup K_2$, where $K_0:=\{m\in\mathbb{N}\ :\ u-10j<m< u+10j\}$, $K_1:=\left\{m\in K\ :\ m\leq u-10j \right\}$ and $K_2:=\left\{m\in K\ :\ u+10j\leq m \right\}$. Clearly, we have
\begin{eqnarray}\label{dist}
\underset{0\leq n\leq N}{\underset{m\in K_0}{{\prod\ }^{\prime}}}\left|\frac{z-\gamma_{m,n}e^{\delta_{m,n}}e^{i\theta_{m,n}}}{z-\gamma_{m,n}}\right|\ \asymp\ \frac{\mathrm{dist}(z,\Lambda)}{\mathrm{dist}(z,\Gamma_\nu)}.
\end{eqnarray}
Next we write $\gamma_{m,n}-\gamma_{m,n}e^{\delta_{m,n}}e^{i\theta_{m,n}} = \eta_{m,n}=|\eta_{m,n}|e^{i\psi_{m,n}}$. Using \eqref{log}, for fixed $1\leq n\leq N$, we get 
{\small\begin{eqnarray}
\log\underset{m\in K_1\cup K_2}{\prod}\left|\frac{z-\gamma_{m,n}e^{\delta_{m,n}}e^{i\theta_{m,n}}}{z-\gamma_{m,n}}\right| 
   & = & \underset{m\in K_1\cup K_2}{\sum} \textmd{Re} \left(\frac{\eta_{m,n}}{z-\gamma_{m,n}}\right) + O(1) \nonumber\\
   & = & \underset{m\in K_1\cup K_2}{\sum}\ |\eta_{m,n}|\cos \left(\psi_{m,n}\right)\frac{u-m}{|z-\gamma_{m,n}|^2} \nonumber\\
   &  & + \underset{m\in K_1\cup K_2}{\sum}\ |\eta_{m,n}|\sin \left(\psi_{m,n}\right)\frac{v-n}{|z-\gamma_{m,n}|^2} + O(1). \nonumber
\end{eqnarray}}
Hence,
{\small\begin{eqnarray}
\left|\underset{m\in K_1\cup K_2}{\sum} |\eta_{m,n}|\sin \left(\psi_{m,n}\right)\frac{v-n}{|z-\gamma_{m,n}|^2} \right|\ & \lesssim\ &\underset{m\in K_1\cup K_2}{\sum} \frac{|v-n|}{(u-m)^2+(v-n)^2}\nonumber\\
    & \leq & \underset{m\geq 1}{\sum} \frac{1}{|v-n|\left[(\frac{u-m}{v-n})^2+1\right] }\ =\  O(1). \nonumber
\end{eqnarray}}
Thus,
{\small\begin{align}
\left|\log\underset{m\in K_1\cup K_2}{\prod} \left|\frac{z-\gamma_{m,n}e^{\delta_{m,n}}e^{i\theta_{m,n}}}{z-\gamma_{m,n}}\right|\right| & =  \left|\underset{m\in K_1\cup K_2}{\sum}\frac{u-m}{|z-\gamma_{m,n}|^2}|\eta_{m,n}|\cos(\psi_{m,n}) + O(1)\right| \nonumber \\
 & \leq j\underset{n\in K_1}{\sum}\frac{u-m}{|z-\gamma_{m,n}|^2}+j\underset{m\in K_2}{\sum}\frac{m-u}{|z-\gamma_{m,n}|^2} + O(1) \nonumber \\
 & = M\log\frac{|z|}{1+|\textmd{Im} z|} + O(1). \label{3.14}
\end{align}}
It follows from \eqref{dist} and \eqref{3.14} that
{\small\begin{equation}\label{6}
\left(\frac{1+|\textmd{Im} (z)|}{1+|z|}\right)^{M}\frac{\mathrm{dist}(z,\Lambda)}{\mathrm{dist}(z,\Gamma_0)}\lesssim\underset{1\leq n\leq N}{\underset{m\in K}{\prod}}\left|\frac{z-\gamma_{m,n}e^{\delta_{m,n}}e^{i\theta_{m,n}}}{z-\gamma_{m,n}}\right|
 \lesssim \left(\frac{1+|z|}{1+|\textmd{Im} (z)|}\right)^{M}\frac{\mathrm{dist}(z,\Lambda)}{\mathrm{dist}(z,\Gamma_0)}.
 \end{equation}}
When $n=0$, we have
\begin{align}
\log\prod_{m\in K_1\cup K_2} \left|\frac{z-\lambda_{m,n}}{z-\gamma_{m,n}}\right|  & = \sum_{m\in K_1\cup K_2}\log\left|1-\frac{\nu}{z-m}\right|\nonumber\\ 
   & = -\nu\underset{m\in K_1\cup K_2}{\sum} \frac{u-m}{(u-m)^2+v^2} + O(1) \ =\ O(1). \label{key44}
\end{align}
Combining \eqref{11}, \eqref{4}, \eqref{6}  and \eqref{key44}, we conclude that
\begin{equation}\label{theta1}
\frac{(1+|\textmd{Im} z|)^M}{(1+|z|)^{\nu-\hat{\delta}+M}}\,\frac{\mathrm{dist}(z,\Lambda)}{\mathrm{dist}(z,\Gamma)}\,\ \lesssim\,\ \left|\Theta_1(z)\right|\ \lesssim\ \frac{(1+|z|)^{\delta-\nu+M}}{(1+|\textmd{Im} z|)^M} \,\frac{\mathrm{dist}(z,\Lambda)}{\mathrm{dist}(z,\Gamma)},
\end{equation}
for every $z\in \mathbb{C}\setminus\Gamma$.\\

$\bullet$ {\bf Estimates of $\Theta_2$.} Under the conditions of Theorem \ref{thm1}, for fixed $0\leq n\leq N$, we have
{\small\begin{align}
\log\left|\Theta_2(z)\right| & =  \underset{m> 2|z|}{\sum}\log\left|\frac{(1-z/\lambda_{m,n})(1-z/\lambda_{-m,n})}{(1-z/\gamma_{m,n})(1-z/\gamma_{-m,n})}\right| \nonumber \\
&  \leq  \underset{m>2|z|}{\sum}\left|\frac{1-z/\lambda_{m,n}}{1-z/\gamma_{m,n}}-1\right| + \left|\frac{1-z/\lambda_{-m,n}}{1+z/\gamma_{m,n}}-1\right| \nonumber\\
   & \lesssim  |z|\underset{m> 2|z|}{\sum}\frac{|\delta_{m,n}|+|\theta_{m,n}|}{m}\  =\ O(1). \nonumber
\end{align}}
By changing the roles of $\Gamma_0$ and $\Lambda$ in the above calculation, we obtain
\begin{eqnarray}
\underset{0\leq n\leq N}{ \underset{m> 2|z|}{\prod}} \left|\frac{(1-z/\lambda_{m,n})(1-z/\lambda_{-m,n})}{(1-z/\gamma_{m,n})(1-z/\gamma_{-m,n})}\right|\ \asymp\ 1. \label{.3}
\end{eqnarray}
Finally, combining \eqref{theta1} and \eqref{.3} we get
{\small\begin{equation}
\frac{(1+|\textmd{Im} z|)^M}{(1+|z|)^{\nu-\hat{\delta}+M}}\ \mathrm{dist}(z,\Lambda)\ \lesssim\ \left|G_\Lambda(z)\right|e^{-\frac{\pi}{2}|z|^2}\ \lesssim\ \frac{(1+|z|)^{\delta-\nu+M}}{(1+|\textmd{Im} z|)^M} \ \mathrm{dist}(z,\Lambda),\quad z\in\mathbb{C}.\nonumber
\end{equation}}

$(2)$ In order to estimate the function $G_{\Lambda_{\beta,\nu}}$, we write
{\small\begin{eqnarray}
G_{\Lambda_{\beta,\nu}}(z)\ =\ G_{\Gamma_\nu}(z) \underset{1\leq n\leq N}{\underset{m\in\mathbb{Z}}{\prod}} \frac{1-z/\lambda_{m,n}}{1-z/\gamma_{m,n}}\ =\ G_{\Gamma_\nu}(z)\Theta_1(z)\Theta_2(z), \quad z\in\mathbb{C}\setminus \Gamma_N,\nonumber
\end{eqnarray}}
where $\Theta_1$ and $\Theta_2$ are as in the previous argument. As above, \begin{eqnarray}\label{key1}
\left|\Theta_2(z)\right| \ \asymp\ 1,\quad z\in\mathbb{C}. 
\end{eqnarray}
To estimate the factor $\Theta_1$, we observe that
\begin{equation}\label{key2}
\underset{1\leq n\leq N}{\underset{1\leq m\leq 2|z|}{\sum}} \log\left|\frac{\gamma_{m,n}\gamma_{-m,n}}{\lambda_{m,n}\lambda_{-m,n}}\right| 
 =  -2\beta \underset{1\leq m\leq 2|z|}{\sum}\frac{1}{m} + O(1)=  -2\beta\log|z| + O(1).
\end{equation}
Now suppose that $\textmd{Re} z>0$. As above, we have
\begin{eqnarray}\label{key3}
\prod_{m\leq 2|z|}\left|\frac{z-\lambda_{-m,n}}{z-\gamma_{-m,n}}\right|\ \asymp\ 1,\quad z\in\mathbb{C}\setminus \Gamma_N.
\end{eqnarray}
Finally,
{\small\begin{align}
\log\prod_{1\leq m\leq 2|z|,n}\left|\frac{z-\lambda_{m,n}}{z-\gamma_{m,n}}\right| 
   & =  \log\underset{m\in K_0,n}{\prod}\left|\frac{z-\lambda_{m,n}}{z-\gamma_{m,n}}\right|+ \underset{m\in K_1\cup K_2,n}{\sum}\log\left|1-\frac{\beta/N}{z-\gamma_{m,n}}\right| \nonumber\\
   & = \log\frac{\mathrm{dist}(z,\Lambda_{\beta,\nu})}{\mathrm{dist}(z,\Gamma)}-2\beta \log\frac{|1-z|}{10j+|v|}\frac{10j+|v|}{|2|z|-z|} + O(1) \nonumber\\
   & = \log\frac{\mathrm{dist}(z,\Lambda_{\beta,\nu})}{\mathrm{dist}(z,\Gamma)}+O(1),\label{key4}
\end{align}}
where $z=u+iv$. By combining \eqref{key1}, \eqref{key2}, \eqref{key3} and \eqref{key4}, we obtain
the desired estimate. This completes the proof.
\end{proof}}

Now, let $\Lambda:=\{\lambda_{m,n}\ :\ m,n\in\mathbb{Z}\}$ be a sequence of complex numbers. We write $\lambda_{m,n}=\gamma_{m,n}e^{\delta_{m,n}}e^{i\theta_{m,n}}$, for every $m,n\in\mathbb{Z}$, where $\delta_{m,n},\theta_{m,n}\in\mathbb{R}.$ Set 
\[
G_\Lambda(z) := (z-\lambda_{0,0})\underset{(m,n)\in\mathbb{Z}^2}{{\prod\ }^{\prime} } \left(1-\frac{z}{\lambda_{m,n}}\right)\exp\left[\frac{z}{\gamma_{m,n}}+\frac{z^2}{2\gamma_{m,n}^2}\right],\quad z\in\mathbb{C}.
\]

\begin{lemma}\label{lem1full}
$(1)$ Under the conditions of Theorem \ref{thm1full}, $G_\Lambda$ is an entire function and we have
{\small\begin{equation}
\frac{(1+|\textmd{Im} z|)^M}{(1+|z|)^{\nu-\hat{\delta}+M}}\ \mathrm{dist}(z,\Lambda)\ \lesssim\ \left|G_\Lambda(z)\right|e^{-\frac{\pi}{2}|z|^2}\ \lesssim\ \frac{(1+|z|)^{\delta-\nu+M}}{(1+|\textmd{Im} z|)^M} \ \mathrm{dist}(z,\Lambda),\quad z\in\mathbb{C},\nonumber
\end{equation}}
for some positive constant $M$, where $\delta=\delta(\Lambda)+\varepsilon$ and $\hat{\delta}=\hat{\delta}(\Lambda)-\varepsilon$, for some small positive $\varepsilon$. \\
$(2)$ The function $G_{\widetilde{\Lambda}_{\beta,\nu}}$ is holomorphic  in $\mathbb{C}$ and satisfies the estimate
{\small\begin{equation}
\frac{(1+|\textmd{Im} z|)^M}{(1+|z|)^{\nu+\beta+M}}\ \mathrm{dist}(z,\widetilde{\Lambda}_{\beta,\nu})\ \lesssim\ \left|G_{\widetilde{\Lambda}_{\beta,\nu}}(z)\right|e^{-\frac{\pi}{2}|z|^2}\ \lesssim\ \frac{(1+|z|)^{-\beta-\nu+M}}{(1+|\textmd{Im} z|)^M} \ \mathrm{dist}(z,\widetilde{\Lambda}_{\beta,\nu}),\nonumber
\end{equation}}
for every $z\in\mathbb{C}.$
\end{lemma}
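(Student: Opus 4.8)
The plan is to run the argument of Lemma \ref{lem1} with the finite family of rows $0\le n\le N$ replaced by the full planar index set, the decisive new ingredient being that hypothesis \ref{thm11full}, i.e. $|\delta_{m,n}|+|\theta_{m,n}|\lesssim|\gamma_{m,n}|^{-2}$, forces the perturbation amplitudes $\eta_{m,n}:=\gamma_{m,n}-\lambda_{m,n}$ to satisfy $|\eta_{m,n}|\lesssim|\gamma_{m,n}|^{-1}$, which is exactly what makes the two–dimensional sums summable. For part $(1)$ I would first prove convergence: writing each factor as $(1-z/\lambda_{m,n})e^{z/\gamma_{m,n}+z^2/(2\gamma_{m,n}^2)}$ and comparing with the unperturbed canonical factor, one gets
\[
\Big|\log\big[(1-z/\lambda_{m,n})e^{z/\gamma_{m,n}+z^2/(2\gamma_{m,n}^2)}\big]\Big|\ \lesssim\ \frac{|z|^3}{|\gamma_{m,n}|^3}+\frac{|z|\,|\eta_{m,n}|}{|\gamma_{m,n}|^2}\ \lesssim\ \frac{|z|^3+|z|}{|\gamma_{m,n}|^3},
\]
which is summable since $\sum_{\gamma\in\Gamma_\nu}|\gamma|^{-3}<\infty$; hence $G_\Lambda$ is entire with zero set exactly $\Lambda$.

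Next I would factor, exactly as in Lemma \ref{lem1}, $G_\Lambda(z)=\sigma(z)\frac{z-\gamma_{0,0}}{z}\,\Theta_1(z)\Theta_2(z)$, where $\sigma$ is the Weierstrass function of $\mathbb{Z}+i\mathbb{Z}$ (so that $|\sigma(z)|e^{-\frac{\pi}{2}|z|^2}\asymp\mathrm{dist}(z,\mathbb{Z}+i\mathbb{Z})$), and $\Theta_1,\Theta_2$ collect the quotients $\tfrac{1-z/\lambda_{m,n}}{1-z/\gamma_{m,n}}$ over $|\gamma_{m,n}|\le 2|z|$ and $|\gamma_{m,n}|>2|z|$ respectively. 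The far factor is harmless: $\big|\log|\Theta_2(z)|\big|\lesssim|z|\sum_{|\gamma|>2|z|}|\eta_\gamma|/|\gamma|^2\lesssim |z|\sum_{|\gamma|>2|z|}|\gamma|^{-3}=O(1)$, so $|\Theta_2|\asymp1$, the planar analogue of \eqref{.3}. For $\Theta_1$ I split $\log|\Theta_1|$ into its modulus part $\sum_{|\gamma|\le2|z|}\log|\gamma_{m,n}/\lambda_{m,n}|$ and its ratio part $\sum_{|\gamma|\le2|z|}\log|(z-\lambda_{m,n})/(z-\gamma_{m,n})|$. The modulus part splits further into the real-axis comparison $\sum_{1\le m\le2|z|}\log\frac{m}{m+\nu}=-\nu\log|z|+O(1)$ and the genuine perturbation sum $\sum_{|\gamma|\le2|z|}\delta_{m,n}$, which by the very definitions of $\delta(\Lambda)$ and $\hat\delta(\Lambda)$ lies between $\hat\delta\log|z|$ and $\delta\log|z|$ up to $O(1)$; this reproduces the $\nu$, $\hat\delta$ and $\delta$ dependence recorded in \eqref{11}.

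The heart of the matter, and the step I expect to be the main obstacle, is the ratio part. Fixing the nearest lattice point and decomposing the disk $\{|\gamma_{m,n}|\le2|z|\}$ into a bounded block $K_0$ around $z$ and the two remaining blocks $K_1\cup K_2$, the block $K_0$ contributes $\asymp\mathrm{dist}(z,\Lambda)/\mathrm{dist}(z,\Gamma_\nu)$ as in \eqref{dist}, while $K_1\cup K_2$ must be shown to contribute the anisotropic factor $(1+|z|)^{\pm M}/(1+|\textmd{Im}\,z|)^{\pm M}$. This is the two–dimensional analogue of \eqref{3.14}: one writes $\eta_{m,n}=|\eta_{m,n}|e^{i\psi_{m,n}}$ and estimates $\sum_{m\in K_1\cup K_2}\textmd{Re}\,\frac{\eta_{m,n}}{z-\gamma_{m,n}}$; the imaginary contribution $\sum|\eta_{m,n}|\sin\psi_{m,n}\frac{\textmd{Im}(z)-n}{|z-\gamma_{m,n}|^2}$ is $O(1)$ and the real contribution produces $M\log\frac{|z|}{1+|\textmd{Im}\,z|}$. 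The new difficulty compared with Lemma \ref{lem1} is that the summation is now over a full planar region rather than over finitely many rows; here the decay $|\eta_{m,n}|\lesssim|\gamma_{m,n}|^{-1}$ is essential, as it makes the rows with $|n|$ large (far from both $\textmd{Im}\,z$ and the real axis) contribute only $O(1)$, so that the total power $M$ stays finite. Combining the $\sigma$–estimate, the modulus part and the ratio part gives the asserted two–sided bound.

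Finally, part $(2)$ is the specialization of part $(1)$ to $\widetilde{\Lambda}_{\beta,\nu}$, for which $\theta_\gamma\equiv0$ and $\delta_\gamma=\beta/|\gamma|^2$. The separation \ref{thm10full} holds because $\widetilde{\Lambda}_{\beta,\nu}$ is a radial rescaling of the separated set $\Gamma_\nu$ by factors $e^{\beta/|\gamma|^2}\to1$, and $\gamma^2\delta_\gamma=\beta\,\gamma^2/|\gamma|^2$ is bounded, so \ref{thm11full} holds. Since now $\theta_\gamma\equiv0$ and the modulus sum $\sum_{|\gamma|\le2|z|}\delta_\gamma=\beta\sum_{|\gamma|\le2|z|}|\gamma|^{-2}$ is evaluated exactly, one has $\delta(\widetilde{\Lambda}_{\beta,\nu})=\hat\delta(\widetilde{\Lambda}_{\beta,\nu})$; consequently the two bounds of part $(1)$ collapse, with no $\varepsilon$–loss, to the single two–sided estimate with exponent $\nu+\beta$, which is the claim.
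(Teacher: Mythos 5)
Your part $(1)$ is essentially the paper's own argument: the same convergence estimate from $|\delta_{m,n}|+|\theta_{m,n}|\lesssim|\gamma_{m,n}|^{-2}$, the same splitting at $|\gamma_{m,n}|=2|z|$, the far factor handled by $|z|\sum_{|\gamma|>2|z|}|\gamma|^{-3}=O(1)$, and the near factor separated into a modulus sum plus a distance--ratio sum whose real/imaginary decomposition produces the $O(1)$ term and the power $M\log\frac{|z|}{1+|\textmd{Im} z|}$. The only cosmetic difference is that you normalize by the Weierstrass $\sigma$-function of $\mathbb{Z}+i\mathbb{Z}$, whereas the paper factors out $G_{\Gamma_\nu}$ and quotes its estimate from Lemma \ref{lem1}.

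Part $(2)$ is where there is a genuine gap: it cannot be obtained by substituting $\delta_\gamma=\beta/|\gamma|^2$ into part $(1)$ as you propose. Since $\Gamma_\nu$ has one point per unit area, $\sum_{|\gamma|\le R}\delta_\gamma=\beta\sum_{|\gamma|\le R}|\gamma|^{-2}$ is a \emph{positive} multiple of $\beta\log R$, so $\delta(\widetilde{\Lambda}_{\beta,\nu})=\hat{\delta}(\widetilde{\Lambda}_{\beta,\nu})=c\beta$ with $c>0$. Feeding this into the exponents $\nu-\hat{\delta}$ and $\nu-\delta$ of part $(1)$ yields bounds of the form $(1+|z|)^{-(\nu-c\beta)}$ (up to the $M$-factors), whereas part $(2)$ asserts $(1+|z|)^{-(\nu+\beta)}$; for $\beta\neq0$ these disagree in sign, so the ``collapse'' you invoke produces the reciprocal of the claimed power. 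The root of the failure is the sign you glossed over in part $(1)$: the modulus part of $\log|\Theta_1|$ is $\sum\log|\gamma_{m,n}/\lambda_{m,n}|=-\sum\delta_{m,n}$, not $+\sum\delta_{m,n}$. When this sum is evaluated directly for $\widetilde{\Lambda}_{\beta,\nu}$ it equals $-\beta\sum_{|\gamma|\le 2|z|}|\gamma|^{-2}$, a \emph{negative} multiple of $\beta\log|z|$, and that minus sign is precisely what produces the claimed exponent $\nu+\beta$ (pushing the zeros outward makes $|G|$ smaller at infinity). This is why the paper does not deduce $(2)$ from $(1)$: it proves $(2)$ by a direct computation of $\log\left|G_{\widetilde{\Lambda}_{\beta,\nu}}(z)/G_{\Gamma_\nu}(z)\right|$, splitting again at $|\gamma|=2|z|$, evaluating the modulus sum with its correct sign, and then reusing the distance--ratio analysis of part $(1)$ together with the known estimate of $G_{\Gamma_\nu}$.

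Two further points if you want to repair the specialization route. First, part $(1)$ is stated under the hypotheses of Theorem \ref{thm1full}, including \eqref{deltafull}, which fails once $|\beta|$ is large, while part $(2)$ is asserted for every real $\beta$; you would at least have to observe that the proof of the estimates in $(1)$ never uses \eqref{deltafull}. Second, your claim that the two bounds collapse ``with no $\varepsilon$-loss'' is fine in spirit, but it requires rerunning the proof (using that $\sum_{|\gamma|\le R}\delta_\gamma$ equals its limit value times $\log R$ up to $O(1)$), not citing the statement of $(1)$, which carries the $\pm\varepsilon$ in its exponents.
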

\begin{proof}
The infinite product defining $G_\Lambda$ converges uniformly on every compact set of $\mathbb{C}$. Indeed, we have
{\small
\begin{align}
\left|\log\left[\left(1-\frac{z}{\lambda_{m,n}}\right)e^{\frac{z}{\gamma_{m,n}}+\frac{z^2}{2\gamma_{m,n}^2}}\right]\right| & \leq \left| -\frac{z}{\lambda_{m,n}}+\frac{z}{\gamma_{m,n}}\right|+ \left| -\frac{z^2}{2\lambda_{m,n}^2}+\frac{z^2}{2\gamma_{m,n}^2}\right| + O\left(\frac{|z|^3}{|\gamma_{m,n}|^3}\right) \nonumber\\
 & \lesssim  |z|\frac{|\delta_{m,n}|+|\theta_{m,n}|}{|\gamma_{m,n}|} + |z|\frac{|\delta_{m,n}|+|\theta_{m,n}|}{|\gamma_{m,n}|^2} + O\left(\frac{|z|^3}{|\gamma_{m,n}|^3}\right).\nonumber
\end{align}}
Now, to estimate the function $G_\Lambda$ we first write
\[
\frac{|G_\Lambda(z)|}{\mathrm{dist}(z,\Lambda)}\ =\ \frac{|G_{\Gamma_\nu}(z)|}{\mathrm{dist}(z,\Gamma_\nu)}\prod_{m,n}\left|\frac{1-z/\lambda_{m,n}}{1-z/\gamma_{m,n}}\right|\ \frac{\mathrm{dist}(z,\Gamma_\nu)}{\mathrm{dist}(z,\Lambda)}.
\]
We have
\begin{eqnarray}
\sum_{m,n\in\mathbb{Z}}\log\left|\frac{1-z/\lambda_{m,n}}{1-z/\gamma_{m,n}}\right| & = & \sum_{|\gamma_{m,n}|\leq 2|z|} \log\left|\frac{1-z/\lambda_{m,n}}{1-z/\gamma_{m,n}}\right| + \sum_{|\gamma_{m,n}|>2|z|} \cdots.  \nonumber
\end{eqnarray}
As in the proof of Lemma \ref{lem1} we have 
{\small\begin{align}
\sum_{|\gamma_{m,n}|> 2|z|} \log\left|\frac{1-z/\lambda_{m,n}}{1-z/\gamma_{m,n}}\right| & \leq \sum_{m^2+n^2> 4|z|^2} \left|\frac{z/\gamma_{m,n}-z/\lambda_{m,n}}{1-z/\gamma_{m,n}}\right| \nonumber \\
   &
    \lesssim |z|\ \sum_{m^2+n^2> 4|z|^2}\frac{|\delta_{m,n}|+|\theta_{m,n}|}{(m^2+n^2)^{1/2}} = O(1). \nonumber
\end{align}}
By changing the roles of
 $\Lambda$ and $\Gamma_\nu$ in the previous calculation,  we conclude that
\begin{eqnarray}
\prod_{|\gamma_{m,n}|\geq 2|z|}\ \left|\frac{1-z/\lambda_{m,n}}{1-z/\gamma_{m,n}}\right|\ \asymp\ 1.\label{keyI_2}
\end{eqnarray} 
Furthermore, we have
\begin{align}
\sum_{|\gamma_{m,n}|\leq 2|z|} \log\left|\frac{1-z/\lambda_{m,n}}{1-z/\gamma_{m,n}}\right| & = \sum_{|\gamma_{m,n}|\leq 2|z|}\delta_{m,n} + \sum_{|\gamma_{m,n}|\leq 2|z|} \log\left|\frac{z-\lambda_{m,n}}{z-\gamma_{m,n}}\right|.\nonumber
\end{align}
By the definitions of $\delta(\Lambda)$ and $\hat{\delta}(\Lambda)$, we have
\begin{eqnarray}
\hat{\delta}\log|z|+O(1)\ \leq\ \sum_{|\gamma_{m,n}|\leq 2|z|}\delta_{m,n}\ \leq\ \delta\log|z| + O(1).
\end{eqnarray}
If $\textmd{Re} z>0$, then
{\small\begin{align}
\underset{m\leq 0}{\underset{m^2+n^2\leq 4|z|^2}{\sum}}\ \log\left|1-\frac{\gamma_{m,n}(1-e^{\delta_{m,n}}e^{i\theta_{m,n}})}{z-\gamma_{m,n}}\right| & \leq \sum_{m^2+n^2\leq 4|z|^2,\ m\leq 0}\ \left|\frac{\gamma_{m,n}(1-e^{\delta_{m,n}}e^{i\theta_{m,n}})}{z-\gamma_{m,n}}\right|\nonumber\\
   & \lesssim \frac{1}{|z|} \sum_{0<m^2+n^2\leq 4|z|^2}\ \frac{1}{(m^2+n^2)^{1/2}}\ =\ O(1). \nonumber
\end{align}}
Hence,
{\small\begin{eqnarray}
\log\prod_{|\gamma_{m,n}|\leq 2|z|}\left|\frac{z-\lambda_{m,n}}{z-\gamma_{m,n}}\right| & = & \sum_{|\gamma_{m,n}|\leq 2|z|,\ m>0} \log\left|\frac{z-\lambda_{m,n}}{z-\gamma_{m,n}}\right| \nonumber\\
    & = & \log\frac{\mathrm{dist}(z,\Lambda)}{\mathrm{dist}(z,\Gamma_\nu)} + \sum_{|\gamma_{m,n}|\leq |z|/2} \log\left|\frac{z-\lambda_{m,n}}{z-\gamma_{m,n}}\right|\nonumber\\
   & & + \sum_{|z|/2\leq |\gamma_{m,n}|\leq 2|z|,\ m>0} \cdots + O(1). \nonumber
\end{eqnarray}}
Since
{\small\begin{eqnarray}
 \sum_{|\gamma_{m,n}|\leq |z|/2} \log\left|\frac{z-\lambda_{m,n}}{z-\gamma_{m,n}}\right| & \leq &  \sum_{|\gamma_{m,n}|\leq |z|/2} \left| \frac{\gamma_{m,n}-\lambda_{m,n}}{z-\gamma_{m,n}}\right|\nonumber\\
 & \lesssim & \frac{1}{|z|}\sum_{m^2+n^2\leq |z|^2/4}\frac{1}{(m^2+n^2)^{1/2}} = O(1), \nonumber
\end{eqnarray}}
we obtain
{\small\begin{eqnarray}
&&  \log\prod_{|\gamma_{m,n}|\leq 2|z|}\left|\frac{z-\lambda_{m,n}}{z-\gamma_{m,n}}\right| \nonumber\\
& = & \log\frac{\mathrm{dist}(z,\Lambda)}{\mathrm{dist}(z,\Gamma_\nu)} + \sum_{|z|/2\leq |\gamma_{m,n}|\leq 2|z|,\ m>0} \textmd{Re}\left[\frac{\gamma_{m,n}-\lambda_{m,n}}{z-\gamma_{m,n}}\right] + O(1) \nonumber\\
      & = & \log\frac{\mathrm{dist}(z,\Lambda)}{\mathrm{dist}(z,\Gamma_\nu)} + {\sum}^{\prime\prime} \textmd{Re}\left[\frac{|\eta_{m,n}|e^{i\psi_\gamma}}{\gamma(z-\gamma_{m,n})}\right] + O(1) \nonumber\\
     & = & \log\frac{\mathrm{dist}(z,\Lambda)}{\mathrm{dist}(z,\Gamma_\nu)} + {\sum}^{\prime\prime} \textmd{Re}\left[\frac{|\eta_{m,n}|e^{i\theta_\gamma+i\psi_\gamma}((u-m)-i(v-n))}{|\gamma||z-\gamma_{m,n}|^2}\right] + O(1), \nonumber
\end{eqnarray}}
the summation (with double primes) is taken over all integers $m$ and $n$ with $(m,n)\in\Big\{(m,n)\in\mathbb{Z}_+\times\mathbb{Z}\ :\ |z-\lambda_{m,n}|,|z-\gamma_{m,n}|\geq 1,\ \mbox{and}\ |z|/2\leq |\gamma_{m,n}|\leq 2|z|\Big\}$.  Note again that
{\small\begin{eqnarray}
\left|{\sum}^{\prime\prime}\frac{|\eta_\gamma|(v-n)\sin(\phi)}{|\gamma|\left((u-m)^2+(v-n)^2\right)}\right| \leq \frac{A}{|z|}\sum_{-2|z|\leq n\leq 2|z|}\sum_{m}  \frac{1}{|v-n|\left(\left(\frac{u-m}{v-n}\right)^2+1\right)} = O(1).\nonumber
\end{eqnarray}}
Consequently,
{\small\begin{eqnarray}
\left|\log{\prod}^{\prime\prime}\left|\frac{z-\lambda_{m,n}}{z-\gamma_{m,n}}\right| \right|& \leq &  \frac{A}{|z|}{\sum}^{\prime\prime} \frac{|u-m|}{|z-\gamma_{m,n}|^2} + O(1) \nonumber \\
  & \leq &  \frac{A}{|z|}\sum_{-2|z|\leq n\leq 2|z|}\sum_m \frac{|u-m|}{|z-\gamma_{m,n_v}|^2} + O(1) \nonumber \\
  & = & M \log\frac{1+|z|}{1+|\textmd{Im} z|}.
\end{eqnarray}}
Finally,
\begin{eqnarray}
\frac{(1+|\textmd{Im} z|)^M}{(1+|z|)^{\nu-\hat{\delta}+M}}\mathrm{dist}(z,\Lambda)\ \lesssim\ |G_\Lambda(z)|e^{-\frac{\pi}{2}|z|^2}\ \lesssim\ \frac{(1+|z|)^{\delta-\nu+M}}{(1+|\textmd{Im} z|)^M}\mathrm{dist}(z,\Lambda).\nonumber
\end{eqnarray}
$(2)$ The estimate of the function $G_{\widetilde{\Lambda}_{\beta,\nu}}$ can be obtained by the same way. We just write 
{\small\begin{eqnarray}
\log\left|\frac{G_{\widetilde{\Lambda}_{\beta,\nu}}(z)}{G_{\Lambda_\nu}(z)}\right| & = & \sum_{|\gamma|\leq 2|z|}+\sum_{|\gamma|\geq 2|z|}\ \log\left|\frac{1-z/\lambda_\gamma}{1-z/\gamma}\right| \nonumber\\
    & = & \sum_{|\gamma|\leq 2|z|} \frac{-\beta}{|\gamma|^2}+\log\left|\frac{z-\gamma e^{\frac{\beta}{|\gamma|^2}}}{z-\gamma}\right|+ O(1) \nonumber \\
    & = & -\beta\log|z| + \sum_{|\gamma|\leq 2|z|} \log\left|\frac{z-\gamma e^{\frac{\beta}{|\gamma|^2}}}{z-\gamma}\right|+ O(1). \nonumber
\end{eqnarray}}
By the above argument and the estimates on the function $G_{\Gamma_\nu}$, the desired estimates follow.
\end{proof}

In the rest of this section, we prepare some  ingredients necessary for proving the results in the third subsection. Let $G_\Gamma$ be the entire function defined by 
\begin{eqnarray}\label{R}
G_\Gamma(z) :=  \frac{z^2-1}{\pi z^2}\ \sin\left(\frac{\pi}{2} z^2\right),\quad z\in\mathbb{C}, \nonumber
\end{eqnarray}
and vanishing exactly on $\Gamma=\{\pm\sqrt{2n},\ \pm i\sqrt{2n}\ :\ n\geq 1\}\cup\{\pm 1\}$. In a similar way, we associate the function
\begin{equation}\label{F1}
G_\Lambda(z) = (z^2-1)\underset{n\geq 1}{\prod}\left(1+\frac{z^2}{2n}\right)\left(1+\frac{z}{\sqrt{2n}}\right)\left(1-\frac{z}{\lambda_n}\right), \quad z\in\mathbb{C},
\end{equation}
to   $\Lambda:=\{\lambda_n,\ -\sqrt{2n},\ \pm i\sqrt{2n}\ :\ n\geq 1\}\cup\{\pm 1\}$. Furthermore, we consider the functions $G_{\Lambda_\beta}$ given in \eqref{F1} associated to \begin{eqnarray}\label{lambda-beta}
\Lambda_\beta:=\Big\{ \sqrt{2n+4\beta},\ -\sqrt{2n},\ \pm i\sqrt{2n}\ :\ n\geq 1 \Big\}\cup\left\{\pm  1\right\}.
\end{eqnarray}
We denote $\mathcal{R}:=\left\{z\in\mathbb{C}\ :\ \textmd{Re} z > |\textmd{Im} z|\geq 0 \right\}$. The following  key lemma describes some properties of the functions $G_\Lambda$ and $G_{\Lambda_\beta}$.

\begin{lemma}\label{lem1ALS}
$(1)$ Under the conditions of Theorem \ref{thm1ALS}, the infinite product in \eqref{F1} converges uniformly on every compact set of $\mathbb{C}$ and hence $G_\Lambda$ is an entire function vanishing exactly on $\Lambda$. Furthermore,
{\small\begin{align}\nonumber
\frac{(1+|\textmd{Im} z^2|)^M\left|G_\Gamma(z)\right|}{(1+|z|)^{2\delta+2M}}\frac{\mathrm{dist}(z,\Lambda)}{\mathrm{dist}(z,\Gamma)}\ \lesssim\ \left|G_\Lambda(z)\right|\ \lesssim\ \frac{(1+|z|)^{2\delta+2M}\left|G_\Gamma(z)\right|}{(1+|\textmd{Im} z^2|)^M}\frac{\mathrm{dist}(z,\Lambda)}{\mathrm{dist}(z,\Gamma)},
\end{align}}
for every $z\in\mathcal{R}$ and
\begin{align}\nonumber
\frac{\left|G_\Gamma(z)\right|}{(1+|z|)^{2\delta}}\ \lesssim\ \left|G_\Lambda(z)\right|\ \lesssim\ (1+|z|)^{2\delta}\left|G_\Gamma(z)\right|,\quad z\in\mathbb{C}\setminus\mathcal{R}.
\end{align}
Here $\delta=\delta(\Lambda)+\varepsilon$ for a positive number $\varepsilon$ small enough.\\
$(2)$  The function $G_{\Lambda_\beta}$ is holomorphic in $\mathbb{C}$ and $\Lambda_\beta$ is exactly the zero set of $G_{\Lambda_\beta}$. Moreover, we have
\begin{eqnarray}\label{rmk}
\left|G_{\Lambda_\beta}(z)\right| \asymp \ \frac{|G_\Gamma(z)|}{(1+|z|)^{2\beta}}\ \frac{\mathrm{dist}(z,\Lambda_\beta)}{\mathrm{dist}(z,\Gamma)},\quad z\in\mathbb{C}\setminus \Gamma,
\end{eqnarray}
and $\Lambda_\beta$ is the zero set of $G_{\Lambda_\beta}$.
\end{lemma}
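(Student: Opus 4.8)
The plan is to reduce everything to a single product over the perturbed positive-real points. Writing $\gamma_n:=\sqrt{2n}$ and using the Weierstrass factorization of the sine,
\[
G_\Gamma(z)=\tfrac{z^2-1}{2}\prod_{n\ge1}\Bigl(1-\tfrac{z}{\gamma_n}\Bigr)\Bigl(1+\tfrac{z}{\gamma_n}\Bigr)\Bigl(1+\tfrac{z^2}{2n}\Bigr),
\]
so that cancelling the common factors with \eqref{F1} yields the clean quotient
\[
\frac{G_\Lambda(z)}{G_\Gamma(z)}\;=\;2\prod_{n\ge1}\frac{1-z/\lambda_n}{1-z/\gamma_n},\qquad z\in\mathbb{C}\setminus\Gamma .
\]
Thus only the positive-real perturbation survives, and $|G_\Gamma(z)|$ already carries both the Gaussian growth $e^{\frac{\pi}{2}|z|^2}$ and the zero structure of $\Gamma$. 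Convergence of the product (hence analyticity of $G_\Lambda$ and the fact that its zero set is exactly $\Lambda$) follows from hypothesis \ref{thm11ALS}: boundedness of $(n\delta_n)$ and $(n\theta_n)$ gives $|\eta_n|:=|\gamma_n-\lambda_n|\asymp|\gamma_n|(|\delta_n|+|\theta_n|)\lesssim 1/|\gamma_n|$, whence $\bigl|\log\frac{1-z/\lambda_n}{1-z/\gamma_n}\bigr|\lesssim |z|\,|\eta_n|/|\gamma_n|^2\lesssim |z|/n^{3/2}$, which is summable locally uniformly.

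For the size estimate I would split the product at $|\gamma_n|\simeq 2|z|$. On the tail $|\gamma_n|>2|z|$ the bound $|\eta_n|\lesssim1/|\gamma_n|$ gives $\sum_{|\gamma_n|>2|z|}\bigl|\tfrac{z}{\gamma_n}-\tfrac{z}{\lambda_n}\bigr|\lesssim|z|\sum_{n>2|z|^2}n^{-3/2}=O(1)$, so, interchanging the roles of $\gamma_n$ and $\lambda_n$, the tail product is $\asymp1$. On the main part I write $\log\bigl|\frac{1-z/\lambda_n}{1-z/\gamma_n}\bigr|=-\delta_n+\log\bigl|\frac{z-\lambda_n}{z-\gamma_n}\bigr|$. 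The modulus sum $-\sum_{|\gamma_n|\le2|z|}\delta_n=-\sum_{n\le2|z|^2}\delta_n$ is controlled by hypothesis \ref{thm12ALS}: since $\log(2|z|^2)=2\log|z|+O(1)$, it lies between $\mp2\delta\log|z|+O(1)$ with $\delta=\delta(\Lambda)+\varepsilon$, producing precisely the factor $(1+|z|)^{\pm2\delta}$.

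The heart of the matter is the residual sum $\sum_{|\gamma_n|\le2|z|}\log\bigl|\frac{z-\lambda_n}{z-\gamma_n}\bigr|$, which I treat by the $K_0\cup K_1\cup K_2$ decomposition of Lemma \ref{lem1}, now relative to the \emph{non-uniform} spacing $\gamma_{n+1}-\gamma_n\asymp1/\gamma_n$. The $O(1)$ nearest points $K_0$ produce the quotient $\mathrm{dist}(z,\Lambda)/\mathrm{dist}(z,\Gamma)$, and the separation hypothesis \ref{thm10ALS} ensures this closest-point contribution is clean. For the far points I use $\log|1+x|=\mathrm{Re}\,x+O(|x|^2)$ and must estimate $\sum|\eta_n|\cos\psi_n\,\frac{u-\gamma_n}{|z-\gamma_n|^2}$, where $z=u+iv$. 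This is the main obstacle: because the counting density of the $\gamma_n$ on the axis is $\rho(\gamma)\asymp\gamma$ while the weight is $|\eta_n|\asymp1/\gamma_n$, the two cancel, and comparison with $\int\frac{|u-\gamma|}{(u-\gamma)^2+v^2}\,d\gamma$ yields $M\log\frac{(1+|z|)^2}{1+|\mathrm{Im}\,z^2|}+O(1)$ rather than the $\log\frac{|z|}{1+|\mathrm{Im}\,z|}$ of the lattice case (the imaginary-part sum remains $O(1)$ exactly as there). This is precisely why $|\mathrm{Im}\,z^2|=|2\,\mathrm{Re}\,z\,\mathrm{Im}\,z|$ and the power $(1+|z|)^{2M}$ enter; morally, under $w=z^2$ the points $\gamma_n^2=2n$ become uniformly spaced and one is back in the lattice-type situation, with $\mathrm{Im}\,w=\mathrm{Im}\,z^2$. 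Finally one separates the two regions: for $z\in\mathcal{R}$ (the $\pi/4$-sector about the positive real axis containing the perturbation locus) the full estimate with the distance quotient and the $|\mathrm{Im}\,z^2|^{\pm M}$ factors holds, whereas for $z\in\mathbb{C}\setminus\mathcal{R}$ the point $z$ is angularly separated from all $\gamma_n,\lambda_n$, so $\mathrm{dist}(z,\Lambda)\asymp\mathrm{dist}(z,\Gamma)$, the far sum is $O(1)$, and only the power $(1+|z|)^{\pm2\delta}$ remains.

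For part $(2)$ the perturbation $\lambda_n=\sqrt{2n+4\beta}$ satisfies $\lambda_n^2-\gamma_n^2=4\beta$, i.e.\ an \emph{exact constant shift} in the variable $\gamma^2=2n$, with $\theta_n=0$. The modulus sum now computes exactly: $-\sum_{n\le2|z|^2}\delta_n=-\tfrac12\sum_{n\le2|z|^2}\log(1+\tfrac{2\beta}{n})=-2\beta\log|z|+O(1)$ by Gamma-function asymptotics, giving the two-sided power $(1+|z|)^{-2\beta}$. Since the shift is constant and real, the far-point real-part sum telescopes and its $|\mathrm{Im}\,z^2|$-dependent contributions cancel to $O(1)$ — exactly as in part $(2)$ of Lemma \ref{lem1} — so one obtains the clean global equivalence $|G_{\Lambda_\beta}(z)|\asymp(1+|z|)^{-2\beta}\,|G_\Gamma(z)|\,\mathrm{dist}(z,\Lambda_\beta)/\mathrm{dist}(z,\Gamma)$ on all of $\mathbb{C}\setminus\Gamma$, together with the identification of $\Lambda_\beta$ as the exact zero set of $G_{\Lambda_\beta}$.
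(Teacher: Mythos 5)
Your proposal is correct and takes essentially the same route as the paper: cancel the common factors so that only the quotient product over the perturbed positive-real points survives, split it at $|\gamma_n|\le 2|z|$, dispose of the tail by the summability coming from \ref{thm11ALS}, extract the power $(1+|z|)^{\pm 2\delta}$ from the modulus sum via \ref{thm12ALS}, and control the angular part by the lattice-type estimate of Lemma \ref{lem1} in the variable $w=z^2$, treating $\mathcal{R}$ and $\mathbb{C}\setminus\mathcal{R}$ separately. The only difference is one of execution: the paper makes the passage to $w=z^2$ literal by multiplying with the reflected product $\prod_n\bigl|\tfrac{z+\lambda_n/\sqrt 2}{z+\sqrt n}\bigr|\asymp 1$ (its step \eqref{pont}) and then quoting Lemma \ref{lem1}, whereas you re-run the $K_0\cup K_1\cup K_2$ decomposition directly in the $z$-plane via the density--integral comparison (the density $\asymp\gamma$ cancelling the weight $|\eta_n|\asymp 1/\gamma_n$), and for part $(2)$ you supply the constant-shift cancellation argument that the paper compresses into ``the estimate is similar.''
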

\begin{proof}

$(1)$ We write the infinite product defining the function $G_\Lambda$ in \eqref{F1} as follows :
{\small\begin{align}
G_\Lambda(\sqrt{2} z)& = (2z^2-1)\underset{n\geq 1}{\prod}\left(1+\frac{z^2}{n}\right)e^{-\frac{z^2}{n}}\times\underset{n\geq 1}{\prod}\left(1+\frac{z}{\sqrt{n}}\right)\left(1-\frac{\sqrt{2}z}{\lambda_n}\right)e^{-\frac{z}{\sqrt{n}}+\frac{z^2}{n}+\frac{z}{\sqrt{n}}} . \label{prod}
\end{align}}

Fix $r>0$ and $|z|\leq r$. For sufficiently large $n$ we have
{\small\begin{align}
\left|\log\left[\left(1-\frac{\sqrt{2}z}{\lambda_n}\right)e^{\frac{z}{\sqrt{n}}+\frac{z^2}{2n}}\right]\right| & \leq \left| -\frac{\sqrt{2}z}{\lambda_n}-\frac{z^2}{\lambda^2_{n}}+\frac{z}{\sqrt{n}}+\frac{z^2}{2n}\right|+O\left(\frac{|z|^3}{n^{3/2}}\right) \nonumber\\
 & \lesssim  |z|\frac{|\delta_n|+|\theta_n|}{\sqrt{n}}+|z|^2\frac{|\delta_n|+|\theta_n|}{n}+\frac{|z|^3}{n^{3/2}} .\nonumber 
\end{align}}
Consequently, $G_\Lambda$ is an entire function and vanishes exactly on $\Lambda$. Now, we will estimate the function $G_\Lambda$. To this end, write
\begin{align}
G_\Lambda(\sqrt{2} z) & = \frac{(2z^2-1)}{\pi z^2}\sin(\pi z^2)\ \underset{n\geq 1}{\prod}\frac{1-(\sqrt{2} z)/\lambda_n}{1-z/\sqrt{n}},\quad z\in\mathbb{C}\setminus \Gamma^+,  \nonumber
\end{align}
where $\Gamma^+:=\{\sqrt{n}\ :\ n\geq 1\}$. It suffices to estimate the infinite product
\begin{eqnarray}
\varPsi(z):=\underset{n\geq 1}{\prod}\frac{1-(\sqrt{2} z)/\lambda_n}{1-z/\sqrt{n}}=\varPsi_1(z)\varPsi_2(z),\quad z\in\mathbb{C}\setminus \Gamma^+, \label{Estim2ALS}
\end{eqnarray}
where
\begin{align}
\varPsi_1(z) = \underset{\sqrt{n}\leq 2|z|}{\prod}\frac{\sqrt{2n}}{\lambda_n}\ \frac{z-\lambda_n/\sqrt{2}}{z-\sqrt{n}}\ \ \mbox{and}\quad
\varPsi_2(z) =\underset{\sqrt{n}\geq 2|z|}{\prod}\frac{\sqrt{2n}}{\lambda_n}\ \frac{z-\lambda_n/\sqrt{2}}{z-\sqrt{n}}. \nonumber
\end{align}\\

{\bf $\bullet$ Upper and lower estimates of $\varPsi_1$.} Let $\delta=\delta(\Lambda)+\varepsilon$, where $\varepsilon$ is a small positive real number. We have
\begin{align}\label{11ALS}
\left|\underset{\sqrt{n}\leq 2|z|}{\sum}\log\left|\frac{\sqrt{2n}}{\lambda_n}\right|\right|  \leq \delta \log\left(4|z|^2\right) = 2\delta\log|z| + O(1).
\end{align}
On the one hand, let $z\in\mathbb{C}\setminus\mathcal{R}=\{z\in\mathbb{C}\ :\ \textmd{Re} z\leq 0\ \ \mbox{or}\ \ 0\leq \textmd{Re} z\leq |\textmd{Im} z|\}$. Recall that 
\begin{eqnarray}\label{logALS}
\log|1+z|=\textmd{Re} z+O(|z|^2),\quad \textmd{Re}(z)\geq -\frac{1}{2},\ |z|<1.
\end{eqnarray}
Then
{\small\begin{align}
\left|\log\underset{1\leq\sqrt{n}\leq 2|z|}{\prod}\left| \frac{z-\lambda_n/\sqrt{2}}{z-\sqrt{n}}\right|\right| &
  = \left|\sum_{n\in J} \textmd{Re}\frac{\sqrt{n}-\sqrt{n}e^{\delta_n}e^{i\theta_n}}{z-\sqrt{n}} + O(1)\right| \nonumber \\
  & \asymp \sum_{n\in J} \sqrt{n}\frac{\left|\delta_n\right|+\left|\theta_n\right|}{|z|} + O(1) = O(1), \nonumber
\end{align}}
where $J:=\{n\in\mathbb{N}\ :\ 1\leq n\leq 4|z|^2\}$. Therefore, \begin{eqnarray}\label{4ALS}
\underset{\sqrt{n}<2|z|}{\prod}\left|\frac{z-(\lambda_n/\sqrt{2})}{z-\sqrt{n}}\right|=\underset{ n\in J}{\prod}\left|\frac{z-(\lambda_{n}/\sqrt{2})}{z-\sqrt{n}}\right|\asymp 1.
\end{eqnarray}
Combining \eqref{11ALS} and \eqref{4ALS}, we obtain
\begin{equation}\label{one}
\frac{1}{(1+|z|)^{2\delta}}\ \lesssim\,\ \left|\varPsi_1(z)\right|\ \lesssim\ (1+|z|)^{2\delta},\quad z\in\mathbb{C}\setminus\mathcal{R}.
\end{equation}

On the other hand, let $z\in\mathcal{R}\setminus\Gamma^+$. Using the calculation in the previous case and changing $z$ by $-z$, we get
\begin{equation}\label{pont}
\underset{n\in J}{\prod}\left|\frac{z-(\lambda_n/\sqrt{2})}{z-\sqrt{n}}\right|\asymp \underset{n\in J}{\prod}\left|\frac{z^2-\lambda^2_{n}/2}{z^2-n}\right|=\underset{n\in J}{\prod}\left|\frac{w-ne^{2\delta_{n}}e^{2i\theta_{n}}}{w-n}\right|.
\end{equation}
By the proof of Lemma \ref{lem1} we have
{\small\begin{equation}\label{6ALS}
\left(\frac{1+|\textmd{Im} (z^2)|}{1+|z|^2}\right)^{M}\frac{\mathrm{dist}(z,\Lambda)}{\mathrm{dist}(z,\Gamma^+)}\lesssim\underset{n\in J}{\prod}\left|\frac{w-ne^{2\delta_{n}}e^{2i\theta_{n}}}{w-n}\right|
 \lesssim \left(\frac{1+|z|^2}{1+|\textmd{Im} (z^2)|}\right)^{M}\frac{\mathrm{dist}(z,\Lambda)}{\mathrm{dist}(z,\Gamma^+)}.
 \end{equation}}
Consequently,
{\small\begin{equation}\label{3'}
\left(\frac{1+|\textmd{Im} (z^2)|}{1+|z|^2}\right)^{M}\frac{\mathrm{dist}(z,\Lambda)}{\mathrm{dist}(z,\Gamma^+)}\lesssim\underset{1<\sqrt{n}<2|z|}{\prod}\left|\frac{z-(\lambda_n/\sqrt{2})}{z-\sqrt{n}}\right|\lesssim \left(\frac{1+|z|^2}{1+|\textmd{Im} (z^2)|}\right)^{M}\frac{\mathrm{dist}(z,\Lambda)}{\mathrm{dist}(z,\Gamma^+)}.
\end{equation}}
By \eqref{11ALS} and \eqref{3'}, we get finally the estimate
\begin{equation}\label{theta1ALS}
\frac{(1+|\textmd{Im} z^2|)^M}{(1+|z|)^{2\delta+2M}}\,\frac{\mathrm{dist}(z,\Lambda)}{\mathrm{dist}(z,\Gamma)}\,\ \lesssim\,\ \left|\varPsi_1(z)\right|\ \lesssim\ \frac{(1+|z|)^{2\delta+2M}}{(1+|\textmd{Im} z^2|)^M} \,\frac{\mathrm{dist}(z,\Lambda)}{\mathrm{dist}(z,\Gamma)},
\end{equation}
for every $z\in \mathcal{R}\setminus\Gamma$.\\

$\bullet$ {\bf Upper and lower estimates of $\varPsi_2$.} Under the conditions of Theorem \ref{thm1}, we have
{\small\begin{align}
\log\left|\varPsi_2(z)\right| 
&  \leq  \underset{\sqrt{n}\geq 2|z|}{\sum}\left|\frac{1-(\sqrt{2} z)/\lambda_n}{1-z/\sqrt{n}}-1\right| \nonumber 
    \lesssim  |z|\underset{\sqrt{n}\geq 2|z|}{\sum}\frac{|\delta_n|+|\theta_n|}{\sqrt{n}}\  =\ O(1). \nonumber
\end{align}}
Since $\Gamma$ and $\Lambda$ play symmetric roles in the above calculation, we then get
\begin{eqnarray}
\left|\varPsi_2(z)\right| =  \underset{\sqrt{n}\geq 2|z|}{\prod}\left|\frac{1-(\sqrt{2} z)/\lambda_n}{1-z/\sqrt{n}}\right| \asymp 1. \label{.3ALS}
\end{eqnarray}
Now, combining \eqref{one}, \eqref{theta1}  and \eqref{.3ALS}, we obtain
\begin{eqnarray}
\frac{|G_\Gamma(z)|}{(1+|z|)^{2\delta}}\,\  \lesssim\,\ \left|G_\Lambda(z)\right|\,\ \lesssim\,\ (1+|z|)^{2\delta}|G_\Gamma(z)|,\quad z\in\mathbb{C}\setminus\mathcal{R}, \nonumber
\end{eqnarray}
and \begin{eqnarray}
|G_\Gamma(z)|\frac{(1+|\textmd{Im} z^2|)^M}{(1+|z|)^{2\delta+2M}}\,\frac{\mathrm{dist}(z,\Lambda)}{\mathrm{dist}(z,\Gamma)}\,\  \lesssim\,\ \left|G_\Lambda(z)\right|\,\ \lesssim\,\ |G_\Gamma(z)|\frac{(1+|z|)^{2\delta+2M}}{(1+|\textmd{Im} z^2|)^M}\,\frac{\mathrm{dist}(z,\Lambda)}{\mathrm{dist}(z,\Gamma)}, \nonumber
\end{eqnarray}
for every $z\in\mathcal{R}\setminus \Gamma$, and the first part in Lemma \ref{lem1} is proved.\\

$(2)$ The estimate of the function $G_{\Lambda_\beta}$ is similar. 
\end{proof}

To prove results in the third part, we need the following standard  lemma. For completeness, we include its proof.

\begin{lemma}\label{7.3}
Let $G_{\Gamma}$ be the entire function associated with $\Gamma$ and let $\alpha$ and $\beta$ be two real numbers. The function $G_{\Gamma}$ belongs to  $ L^2\left(\left(\frac{1+|z^2|}{1+|\textmd{Im} z^2|}\right)^\alpha\frac{d\mu}{1+|z|^{2\beta}}\right)$ if and only if $\beta>1/2$.
\end{lemma}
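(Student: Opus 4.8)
The plan is to reduce the weighted norm to a single scalar integral by the substitution $w=z^2$, and then to locate precisely where the mass of that integral lives. First I would write the squared norm
\[
I(\alpha,\beta):=\int_\mathbb{C}|G_\Gamma(z)|^2\left(\frac{1+|z^2|}{1+|\mathrm{Im}\,z^2|}\right)^\alpha\frac{e^{-\pi|z|^2}}{1+|z|^{2\beta}}\,dA(z),
\]
and observe that the whole integrand is a function of $w:=z^2$ alone: since $|z|^2=|z^2|=|w|$ we have $e^{-\pi|z|^2}=e^{-\pi|w|}$ and $1+|z|^{2\beta}=1+|w|^{\beta}$, while $|G_\Gamma(z)|^2=\tfrac{|w-1|^2}{\pi^2|w|^2}\,|\sin(\tfrac{\pi}{2}w)|^2$. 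The map $z\mapsto z^2$ carries each half-plane $\{\pm\,\mathrm{Re}\,z>0\}$ biholomorphically onto $\mathbb{C}\setminus(-\infty,0]$ with $dA(w)=4|w|\,dA(z)$, and the integrand is even in $z$; using in addition the elementary identity $|\sin(\tfrac{\pi}{2}w)|^2=\sin^2(\tfrac{\pi}{2}\mathrm{Re}\,w)+\sinh^2(\tfrac{\pi}{2}\mathrm{Im}\,w)=\tfrac12\bigl(\cosh(\pi\,\mathrm{Im}\,w)-\cos(\pi\,\mathrm{Re}\,w)\bigr)$, the change of variables yields
\[
I(\alpha,\beta)\ \asymp\ \int_\mathbb{C}\frac{|w-1|^2}{|w|^3}\bigl(\cosh(\pi\,\mathrm{Im}\,w)-\cos(\pi\,\mathrm{Re}\,w)\bigr)\left(\frac{1+|w|}{1+|\mathrm{Im}\,w|}\right)^\alpha\frac{e^{-\pi|w|}}{1+|w|^{\beta}}\,dA(w).
\]

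Next I would dispose of the regions that play no role in the dichotomy. Near $w=0$ the factor $\cosh(\pi\,\mathrm{Im}\,w)-\cos(\pi\,\mathrm{Re}\,w)\asymp|w|^2$ cancels the $|w|^{-3}$ singularity, so the integral is harmless there, consistently with $G_\Gamma$ being entire. Writing $w=s+it$, the exponential part of the integrand is $(\cosh\pi t-\cos\pi s)\,e^{-\pi|w|}\asymp e^{-\pi(|w|-|t|)}$, and since $|w|-|t|=s^2/(\sqrt{s^2+t^2}+|t|)$, this is $\le e^{-\pi|w|/2}$ once $|t|\le|w|/2$. Hence the part of $\mathbb{C}$ away from the imaginary axis of $w$ contributes a convergent integral for every $\alpha$ and $\beta$ (the Gaussian-type decay beats the polynomial weights), so it does not affect whether $I(\alpha,\beta)$ is finite.

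The heart of the matter is the region near the imaginary axis, where (say $t\ge1$, the case $t\le-1$ being symmetric) $|t|\asymp|w|$ is large and $|s|$ is small. There $\tfrac{1+|w|}{1+|\mathrm{Im}\,w|}\asymp1$, so the $\alpha$-weight disappears (this is why the final answer will be independent of $\alpha$), while $\tfrac{|w-1|^2}{|w|^3}\asymp t^{-1}$, $1+|w|^{\beta}\asymp t^{\beta}$, and $|w|-t\asymp s^2/(2t)$. The integrand thus behaves like $t^{-1-\beta}e^{-\pi s^2/(2t)}$; integrating first in $s$ gives a Gaussian factor $\int_\mathbb{R}e^{-\pi s^2/(2t)}\,ds\asymp t^{1/2}$, leaving $t^{-1/2-\beta}$, and then $\int_1^\infty t^{-1/2-\beta}\,dt$ converges exactly when $\beta>1/2$. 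For the lower bound I would restrict to $\{|s|\le\sqrt{t},\,t\ge1\}$, where $\cosh\pi t-\cos\pi s\ge\tfrac12 e^{\pi t}-1\gtrsim e^{\pi t}$, so the same estimate holds from below and $I(\alpha,\beta)$ genuinely diverges when $\beta\le1/2$.

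The main obstacle is the bookkeeping in this near-axis region: making the asymptotics $|w|-|t|\asymp s^2/(2|t|)$ and $\tfrac{1+|w|}{1+|\mathrm{Im}\,w|}\asymp1$ uniform over the correct $s$-range, and confirming that the oscillatory, occasionally vanishing factor $\cosh\pi t-\cos\pi s$ may be replaced by $e^{\pi|t|}$ without losing the sharp two-sided control needed for both the convergence when $\beta>1/2$ and the divergence when $\beta\le1/2$. Everything else amounts to a routine comparison of the integrand with $|w|^{-1-\beta}e^{-\pi(|w|-|t|)}$.
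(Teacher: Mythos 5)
Your proof is correct, and it rests on exactly the phenomenon the paper exploits: after the modulus identity for the sine, the factor $|\sin(\tfrac{\pi}{2}z^2)|^2e^{-\pi|z|^2}$ is a transverse Gaussian concentrated where $|\mathrm{Im}\,z^2|\approx |z|^2$, and the width of that Gaussian is what produces the threshold $\beta=1/2$, independently of $\alpha$. The organization, however, is genuinely different. The paper never leaves the coordinates $z=x+iy$: it keeps only the $\mathrm{sh}^2(\tfrac{\pi}{2}\mathrm{Im}\,z^2)$ term, observes that $e^{\pi|\mathrm{Im}\,z^2|-\pi|z|^2}=e^{-\pi(x-y)^2}$ on the sector $0<y<x$, substitutes $y\mapsto x-y$ and applies Tonelli, so that finiteness of the norm is reduced, through a chain of two-sided equivalences, to the convergence of the single explicit integral $\int^{\infty}x^{2\alpha-2\beta}(1+x^2-xy)^{-\alpha}\,dx$, which holds iff $\beta>1/2$; the ``if and only if'' comes out in one sweep. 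You instead linearize the geometry first via $w=z^2$, so that the critical diagonals become the imaginary axis, and then run a three-region decomposition with separate upper and lower Gaussian estimates. What your route buys is transparency: the irrelevance of $\alpha$ is visible at a glance (the $\alpha$-weight is $\asymp 1$ in the only region that matters), and divergence for $\beta\le 1/2$ is a genuine lower bound rather than the endpoint of a chain of equivalences; what it costs is having to argue convergence and divergence separately and to keep the asymptotics $|w|-t\asymp s^2/t$ uniform in the near-axis region. One detail to tidy: your replacement $1+|w|^{\beta}\asymp t^{\beta}$ there is valid only for $\beta\ge 0$; when $\beta<0$ one has $1+|w|^{\beta}\asymp 1$ instead, but then the integrand is still $\gtrsim t^{-1}$ on $\{|s|\le\sqrt{t},\ t\ge 1\}$, so divergence persists and the conclusion is unaffected.
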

\begin{proof}
We recall that
$$|\sin(z)|^2=\big(\sin(\textmd{Re} z)\big)^2+\big(\mathrm{sh}(\textmd{Im} z)\big)^2,\quad z\in\mathbb{C}.$$
Therefore, the function $G_{\Gamma}$ belongs to  $ L^2\left(\left(\frac{1+|z^2|}{1+|\textmd{Im} z^2|}\right)^\alpha\frac{d\mu}{1+|z|^{2\beta}}\right)$ if and only if
$$\int_\mathbb{C}\left|\frac{\mathrm{sh}(\frac{\pi}{2}\textmd{Im} z^2)}{1+|z|^\beta}\right|^2\left(\frac{1+|z^2|}{1+|\textmd{Im} z^2|}\right)^\alpha e^{-\pi|z|^2}dA(z)\ <\ \infty.$$
By the Tonelli theorem, we get
{\small\begin{eqnarray}
I & := & \int_{|z|>1}\left|\frac{\mathrm{sh}(\frac{\pi}{2}\textmd{Im} z^2)}{|z|^\beta}\right|^2 \left(\frac{1+|z^2|}{1+|\textmd{Im} z^2|}\right)^\alpha e^{-\pi|z|^2}\,dA(z) \nonumber\\
  & \asymp &  \int_{|z|>1}\frac{1}{|z|^{2\beta-2\alpha}(1+|\textmd{Im} z^2|)^\alpha}e^{\pi|\textmd{Im} z^2|-\pi|z|^2}\,dA(z) \nonumber\\
  & = & 8 \int_{1}^\infty\int_{0}^{x}\frac{e^{-\pi(x-y)^2}}{(x^2+y^2)^{\beta-\alpha}(1+xy)^\alpha}\,dxdy. \label{I-}
\end{eqnarray}}
It follows from a simple change of variables that
\begin{eqnarray}
J & := & \int_{0}^{x}\frac{e^{-\pi(x-y)^2}}{(x^2+y^2)^{\beta-\alpha}(1+xy)^\alpha}\,\,dy \nonumber\\
   & \asymp & \frac{1}{x^{2\beta-2\alpha}} \int_{0}^{\infty}\frac{e^{-\pi y^2}}{(1+x^2-xy)^\alpha}\chi_{[0,x]}(y)\, dy. \label{latter}
\end{eqnarray}
By \eqref{I-} and \eqref{latter}, we obtain
  \begin{eqnarray}
 I & = & \int_{1}^\infty\int_{0}^{\infty}\frac{1}{x^{2\beta-2\alpha}(1+x^2-xy)}\chi_{[0,x]}(y)e^{-\pi y^2}\,dxdy \nonumber \\
  & = & \int_{0}^\infty e^{-\pi y^2}dy\int_{\max\{1,y\}}^{\infty}\frac{dx}{x^{2\beta-2\alpha}(1+x^2-xy)^\alpha}dy. \nonumber
\end{eqnarray}
Consequently, the integral converges if and only if the following  integral,
$$\int_{\max\{1,y\}}^{\infty}\frac{dx}{x^{2\beta-2\alpha}(1+x^2-xy)^\alpha},$$
is also convergent, that is, if and only if $\beta > 1/2$. This completes the proof.
\end{proof}
The following lemma will be useful in the proofs of Theorems \ref{thm1ALS} and \ref{corGaborALS}.
\begin{lemma}\label{sep}
Let $\Lambda$ be a sequence satisfying the conditions of Theorem \ref{thm1ALS}. Then
\begin{eqnarray}\nonumber
\int_\mathbb{C} \left|F(z)\right|^2 \frac{\mathrm{dist}(z,\Gamma)^2}{\mathrm{dist}(z,\Lambda)^2} d\mu(z) \asymp \int_\mathbb{C} \left|F(z)\right|^2  d\mu(z)
\end{eqnarray}
for every entire function $F$ vanishing on $\Lambda$.
\end{lemma}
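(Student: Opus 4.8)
The plan is to exploit that $\Gamma$ and $\Lambda$ differ \emph{only} along the positive real axis: there $\Gamma$ carries the points $\sqrt{2n}$ and $\Lambda$ carries the perturbed points $\lambda_n=\sqrt{2n}\,e^{\delta_n}e^{i\theta_n}$, while all remaining points ($-\sqrt{2n}$, $\pm i\sqrt{2n}$ and $\pm1$) are common to both sequences. Consequently the weight $w(z):=\mathrm{dist}(z,\Gamma)^2/\mathrm{dist}(z,\Lambda)^2$ is bounded above and below by absolute constants \emph{except} in small neighbourhoods of the pairs $\{\sqrt{2n},\lambda_n\}$. First I would use the separation hypothesis \ref{thm10ALS} to fix disjoint discs $\Omega_n$ of radius $\asymp 1/\sqrt{n}$ isolating each such pair, merging the finitely many discs that overlap into clusters of bounded diameter in the event that the displacement $|\sqrt{2n}-\lambda_n|\lesssim 1/\sqrt n$ (guaranteed by \ref{thm11ALS}) exceeds the local gap. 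On $E:=\mathbb{C}\setminus\bigcup_n\Omega_n$ one checks directly that $\mathrm{dist}(z,\Gamma)\asymp\mathrm{dist}(z,\Lambda)$: near a common point both distances equal $|z-p|$, whereas away from the discs the displacement $|\sqrt{2n}-\lambda_n|$ is negligible compared with $\mathrm{dist}(z,\{\sqrt{2n}\})\gtrsim 1/\sqrt n$. Hence $w\asymp 1$ on $E$ and $\int_E|F|^2 w\,d\mu\asymp\int_E|F|^2\,d\mu$, with no use of the hypothesis on $F$.

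The heart of the argument is the contribution of a single disc $\Omega_n$ (take it to contain one pair $\{\sqrt{2n},\lambda_n\}$; the cluster case is identical with products). There $\mathrm{dist}(z,\Gamma)=|z-\sqrt{2n}|$ and $\mathrm{dist}(z,\Lambda)=|z-\lambda_n|$, and since $F$ vanishes at $\lambda_n$ we write $F(z)=(z-\lambda_n)g_n(z)$ with $g_n$ holomorphic, so that
\[
\int_{\Omega_n}|F|^2 w\,d\mu=\int_{\Omega_n}|g_n|^2\,|z-\sqrt{2n}|^2\,d\mu,\qquad \int_{\Omega_n}|F|^2\,d\mu=\int_{\Omega_n}|g_n|^2\,|z-\lambda_n|^2\,d\mu.
\]
Because $\Omega_n$ has diameter $\asymp 1/\sqrt n$ while sitting at distance $\asymp\sqrt n$ from the origin, the Gaussian density $e^{-\pi|z|^2}$ varies over $\Omega_n$ by only a bounded factor, so it can be frozen and pulled out of both integrals. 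After rescaling $\Omega_n$ to a fixed disc, the claim reduces to the scale-free local estimate
\[
\int_{D(0,\rho)}|g|^2\,|w-a|^2\,dA\ \asymp\ \int_{D(0,\rho)}|g|^2\,dA,\qquad a\in D(0,\rho/2),
\]
uniform in $a$ and in $g$ holomorphic on $D(0,\rho)$. Applying it with $a$ equal to the rescaled images of $\sqrt{2n}$ and of $\lambda_n$ — both in the inner half-disc by \ref{thm11ALS} — yields $\int_{\Omega_n}|F|^2 w\,d\mu\asymp\int_{\Omega_n}|F|^2\,d\mu$ with constants independent of $n$; summing over $n$ and adding $E$ finishes the proof.

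The local estimate is where the real content sits. Its upper bound is trivial since $|w-a|\le 2\rho$ on $D(0,\rho)$. The lower bound asserts that the $L^2$-mass of a holomorphic function cannot concentrate at an interior point: with the circular means $m(t)=\frac1{2\pi}\int_0^{2\pi}|g(a+te^{i\theta})|^2\,d\theta$, subharmonicity of $|g|^2$ makes $t\mapsto m(t)$ nondecreasing, whence $\int_{D(a,\rho/4)}|g|^2\,dA\le\frac13\int_{D(a,\rho/2)}|g|^2\,dA\le\frac13\int_{D(0,\rho)}|g|^2\,dA$; restricting the weighted integral to the complement of $D(a,\rho/4)$, where $|w-a|^2\ge(\rho/4)^2$, then gives $\int_{D(0,\rho)}|g|^2|w-a|^2\,dA\gtrsim\int_{D(0,\rho)}|g|^2\,dA$.

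I expect the main obstacle to be the bookkeeping that makes this local estimate apply \emph{uniformly} across all $n$: one must check that each merged cluster contains equally many points of $\Gamma$ and of $\Lambda$ (so that the product form of the estimate balances), that the number of points per cluster stays bounded (so the implied constants are uniform, via the iterated version of the local estimate), and that the frozen-Gaussian comparison holds with a single constant. Once this is organised, the hypotheses \ref{thm10ALS} and \ref{thm11ALS} are precisely what is needed, while condition \ref{thm12ALS} plays no role in this lemma.
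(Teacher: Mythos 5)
Your proposal is correct, and it shares the paper's overall geometric picture --- isolate the perturbed points in discs of radius $\asymp 1/\sqrt{n}$, check that the weight $\mathrm{dist}(z,\Gamma)^2/\mathrm{dist}(z,\Lambda)^2$ is $\asymp 1$ off these discs, and exploit the vanishing of $F$ at $\lambda_n$ to form holomorphic quotients --- but your local mechanism is genuinely different from the paper's. The paper centers disjoint discs $D(\lambda_n,c/2\sqrt{n})$ at the points of $\Lambda$ alone and never compares weighted and unweighted integrals on a disc directly: it uses the sub-mean-value property to dominate $\int_{D(\lambda_n,c/2\sqrt{n})}|F|^2\,d\mu$, and likewise $\int_{D(\lambda_n,c/2\sqrt{n})}\bigl|F(z)\tfrac{z-\gamma_m}{z-\lambda_n}\bigr|^2 d\mu$ (holomorphic because $F(\lambda_n)=0$), by the unweighted integral over the surrounding annulus $C(\lambda_n,c/2\sqrt{n},c/\sqrt{n})$, so that both directions of the equivalence are reduced to $\int_{\mathbb{C}\setminus\mathcal{D}_\Lambda}|F|^2 d\mu$. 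You instead prove a self-contained two-sided estimate \emph{inside} each disc: freeze the Gaussian, rescale, factor out the zero, and invoke the non-concentration inequality for $|g|^2$ via monotone circular means; your iterated, scale-free version of this lemma makes the uniformity of constants in $n$ explicit. Both mechanisms rest on subharmonicity and both work; the paper's is shorter, but your clustering device has a real payoff: the paper's first inequality \eqref{ineq} tacitly requires $\mathrm{dist}(z,\Gamma)\gtrsim \mathrm{dist}(z,\Lambda)$ on $\mathbb{C}\setminus\mathcal{D}_\Lambda$, which fails near any $\gamma_n$ that falls outside $D(\lambda_n,c/2\sqrt{n})$ --- entirely possible, since the displacement bound coming from \ref{thm11ALS} may exceed the separation constant $c$ of \ref{thm10ALS}. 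Merging overlapping discs into clusters containing equally many (boundedly many) points of $\Gamma$ and of $\Lambda$, and running the iterated local estimate there, is precisely the repair this step needs, so your version is in this respect more complete than the paper's.
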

\begin{proof} Taking into account condition \ref{thm10ALS} in Theorem \ref{thm1ALS}, we can find a constant $c>0$ such that the disks $\left\{D(\lambda_n,c/\sqrt{n})\right\}$ are {disjoint}. We denote  $\mathcal{D}_\Lambda(c):=\cup D(\lambda_n,c/2\sqrt{n})$. Then
\begin{eqnarray}
\int_\mathbb{C} \left|F(z)\right|^2 \frac{\mathrm{dist}(z,\Gamma)^2}{\mathrm{dist}(z,\Lambda)^2} d\mu(z) & = & \int_{\mathbb{C}\setminus\mathcal{D}_\Lambda} \left|F(z)\right|^2 \frac{\mathrm{dist}(z,\Gamma)^2}{\mathrm{dist}(z,\Lambda)^2} d\mu(z) \nonumber\\
& & + \int_{\mathcal{D}_\Lambda} \left|F(z)\right|^2 \frac{\mathrm{dist}(z,\Gamma)^2}{\mathrm{dist}(z,\Lambda)^2} d\mu(z) \nonumber\\
     & \geq & \int_{\mathbb{C}\setminus\mathcal{D}_\Lambda} \left|F(z)\right|^2 d\mu(z). \label{ineq}
\end{eqnarray}
Furthermore, for every $n\geq 1$, we have
\begin{eqnarray}\nonumber
\int_{D(\lambda_n,c/2\sqrt{n})}\left|F(z)\right|^2d\mu(z) & \lesssim & \int_{D(\lambda_n,c/2\sqrt{n})} n\int_{C(\lambda_n,c/2\sqrt{n},c/\sqrt{n})}\left|F(w)\right|^2d\mu(w)dA(z) \\
    & \asymp & \int_{C(\lambda_n,c/2\sqrt{n},c/\sqrt{n})}\left|F(w)\right|^2d\mu(w). \nonumber
\end{eqnarray}
Summing over $n$ and using \eqref{ineq}, we obtain \begin{eqnarray}
\int_\mathbb{C} \left|F(z)\right|^2 \frac{\mathrm{dist}(z,\Gamma)^2}{\mathrm{dist}(z,\Lambda)^2} d\mu(z) & \gtrsim & \int_{\mathbb{C}} \left|F(z)\right|^2 d\mu(z). \nonumber
\end{eqnarray}
On the other hand,
{\small
\begin{eqnarray}
I_n & := &  \int_{D(\lambda_n,c/2\sqrt{n})}\left|F(z)\right|^2\frac{\mathrm{dist}(z,\Gamma)^2}{\mathrm{dist}(z,\Lambda)^2}d\mu(z) \nonumber\\
   & = & \int_{D(\lambda_n,c/2\sqrt{n})}\left|F(z)\frac{z-\gamma_m}{z-\lambda_n}\right|^2d\mu(z) \nonumber\\
   & \lesssim & \int_{D(\lambda_n,c/2\sqrt{n})} n\int_{C(\lambda_n,c/2\sqrt{n},c/\sqrt{n})}\left|F(w)\frac{w-\gamma_m}{w-\lambda_n}\right|^2d\mu(w)dA(z) \nonumber \\
    & \asymp & \int_{C(\lambda_n,c/2\sqrt{n},c/\sqrt{n})}\left|F(w)\right|^2d\mu(w). \nonumber
\end{eqnarray}}
Summing again over $n$ and using \eqref{ineq}, we get
\begin{eqnarray}
\int_\mathbb{C} \left|F(z)\right|^2 \frac{\mathrm{dist}(z,\Gamma)^2}{\mathrm{dist}(z,\Lambda)^2} d\mu(z) & \lesssim & \int_{\mathbb{C}} \left|F(z)\right|^2 d\mu(z). \nonumber
\end{eqnarray}
This completes the proof.
\end{proof}

\section{Proofs of the main results.}\label{coro-proof}
Theorems \ref{corA} and \ref{corAfull} will be obtained from Theorems \ref{thm1} and \ref{thm1full} respectively. The proof of Theorems \ref{thm2}, \ref{thm1}, \ref{thm2full} and \ref{thm1full} will be based on the estimates presented in the previous section and on the following result. 
\begin{lemma}\label{gen}
Let $\Lambda$ be a sequence of complex numbers. 
\begin{enumerate}[leftmargin=* ,parsep=0cm,itemsep=0cm,topsep=0cm]
\item Suppose that there exists an entire function $f$ such that $\mathrm{Z}(f) = \Lambda$ satisfying 
\begin{equation}
\left|f(z)\right|\ \lesssim\ e^{\frac{\pi}{2}|z|^2}\frac{1}{(1+|z|)^\alpha}\left(\frac{1+|z|}{1+|\textmd{Im} z|}\right)^{M},\quad z\in\mathbb{C}.
\end{equation}
If $\alpha>0$, then $\Lambda\setminus\{\lambda\}$, for some $\lambda\in\Lambda$ (for every $\lambda\in\Lambda$), is a zero set for $\mathcal{F}$. 
\item Assume that there exists an entire function $f$ that vanishes exactly in $\Lambda$ and verifies
\begin{equation}\label{2Lem}
e^{\frac{\pi}{2}|z|^2}\frac{\mathrm{dist}(z,\Lambda)}{(1+|z|)^\alpha}\left(\frac{1+|\textmd{Im} z|}{1+|z|}\right)^{M}\ \lesssim\ \left|f(z)\right|,\quad z\in\mathbb{C}.
\end{equation}
If $\alpha\leq 1$, then $\Lambda$ is a uniqueness set for $\mathcal{F}$. 
\end{enumerate}
\end{lemma}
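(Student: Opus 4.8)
The plan is to read both statements through the membership criterion for $\mathcal{F}$: an entire function whose growth is the ``half Gaussian'' $e^{\frac{\pi}{2}|z|^2}$ times a correction lies in $\mathcal{F}$ essentially according to the polynomial order of the correction, the borderline decay being $(1+|z|)^{-1}$. Removing one zero from a density--one sequence shifts this borderline by exactly one unit, which is why the two thresholds $\alpha>0$ and $\alpha\le 1$ are complementary. In part $(1)$ I would manufacture an explicit witness in $\mathcal{F}$ vanishing on $\Lambda\setminus\{\lambda\}$; in part $(2)$ I would assume a nonzero $F\in\mathcal{F}$ vanishing on $\Lambda$ and derive a contradiction from the lower bound on $f$.

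For part $(1)$, since the points of $\Lambda$ are simple zeros of $f$, set $g(z):=f(z)/(z-\lambda)$. This is entire and vanishes exactly on $\Lambda\setminus\{\lambda\}$, so it suffices to show $g\in\mathcal{F}$, i.e. $\int_{\mathbb{C}}|g|^2\,d\mu<\infty$. The singularity at $\lambda$ is removable, so only the behaviour at infinity matters, where $|z-\lambda|\asymp|z|$ and the hypothesis gives $|g(z)|^2 e^{-\pi|z|^2}\lesssim (1+|z|)^{2M-2\alpha-2}(1+|\textmd{Im}\,z|)^{-2M}$. Simply integrating this pointwise bound is \emph{not} enough: the weight $(1+|\textmd{Im}\,z|)^{-2M}$ concentrates mass along the real axis and the pointwise estimate alone does not yield a convergent integral. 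This is the main obstacle. To overcome it I would not integrate the bound directly but exploit that $g$ is holomorphic, replacing the anisotropic pointwise control near the real axis by an averaged (isotropic) one via a sub--mean--value estimate, after which the integral is reduced by Tonelli --- slicing transversally as in the proof of Lemma~\ref{7.3} --- to a one--dimensional tail whose convergence is governed precisely by $\alpha>0$.

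For part $(2)$, suppose $F\in\mathcal{F}$, $F\not\equiv 0$, vanishes on $\Lambda$. Since $f$ vanishes exactly (and simply) on $\Lambda$, the quotient $h:=F/f$ is entire. The reproducing kernel gives the pointwise bound $|F(z)|\le\|F\|_{\mathcal{F}}\,e^{\frac{\pi}{2}|z|^2}$, and dividing by the lower estimate for $f$ yields $|h(z)|\lesssim (1+|z|)^{\alpha+M}/\mathrm{dist}(z,\Lambda)$, where I have used $(1+|\textmd{Im}\,z|)^{-M}\le 1$. Off a union of uniformly small, pairwise disjoint disks centred at the (separated) points of $\Lambda$ one has $\mathrm{dist}(z,\Lambda)\gtrsim 1$, so $h$ is polynomially bounded there; the maximum principle transfers this bound across the disks, and hence $h$ is a polynomial. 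It remains to contradict $\|F\|_{\mathcal{F}}<\infty$. From the lower bound on $f$, $\|F\|_{\mathcal{F}}^2=\int_{\mathbb{C}}|h|^2|f|^2\,d\mu\gtrsim\int_{\mathbb{C}}|h(z)|^2\,\mathrm{dist}(z,\Lambda)^2\,(1+|\textmd{Im}\,z|)^{2M}(1+|z|)^{-2\alpha-2M}\,dA$. Restricting to the complement of the disks, where $\mathrm{dist}(z,\Lambda)^2\gtrsim 1$ and $|h|$ is bounded below for large $|z|$, and estimating dyadically gives $\int_{R\le|z|\le 2R}(1+|\textmd{Im}\,z|)^{2M}\,dA\asymp R^{2M+2}$, so the annulus contributes $\asymp R^{2-2\alpha}$; the sum over dyadic $R$ diverges exactly when $\alpha\le 1$. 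A nonconstant $h$ only makes the integrand larger. This contradicts $F\in\mathcal{F}$, forcing $h\equiv 0$, i.e. $F\equiv 0$; hence $\Lambda$ is a uniqueness set.

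The hard part is thus concentrated in part $(1)$: the weight $(1+|\textmd{Im}\,z|)^{-2M}$ renders the pointwise estimate insufficient, so the convergence of $\int|g|^2\,d\mu$ must be extracted from the holomorphy of $g$ together with a transversal slicing of the integral. Part $(2)$, by contrast, is essentially a Liouville argument combined with the divergence of an explicit weighted integral at the borderline $\alpha\le 1$.
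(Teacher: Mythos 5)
Your part (2) is correct and is essentially the paper's own proof: factor $F=fh$ with $h$ entire, combine the pointwise bound $|F(z)|\le\|F\|_{\mathcal F}\,e^{\frac{\pi}{2}|z|^2}$ with the lower bound \eqref{2Lem} to conclude that $h$ is a polynomial, then contradict $\|F\|_{\mathcal F}<\infty$ by the divergence of a weighted area integral when $\alpha\le 1$. Your dyadic execution is in fact a little more robust than the paper's, since it only uses $|h|\gtrsim 1$ near infinity (off the zeros of $h$) and never needs the paper's assertion that $\deg h\le\alpha$; the divergence is correctly located in the region $|\mathrm{Im}\,z|\asymp|z|$, where the factor $\bigl(\tfrac{1+|\mathrm{Im}\,z|}{1+|z|}\bigr)^{2M}$ is harmless.

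Part (1), however, contains a genuine gap, and it is one that no averaging argument can close. You reduce the claim to showing $g=f/(\cdot-\lambda)\in\mathcal F$, and you correctly observe that integrating the hypothesis is insufficient: over the strip $\{|\mathrm{Im}\,z|\le 1\}$ the squared bound integrates to $\asymp\int(1+|x|)^{2M-2\alpha-2}\,dx$, which diverges as soon as $M\ge\alpha+\tfrac12$. (This diagnosis is sharper than the paper itself, whose proof of (1) asserts
\[
\int_{\mathbb C}\frac{1}{(1+|z|)^{2\alpha+2}}\left(\frac{1+|z|}{1+|\mathrm{Im}\,z|}\right)^{2M}dm(z)\ \asymp\ \int_{\mathbb C}\frac{dm(z)}{(1+|z|)^{2\alpha+2}};
\]
this is false whenever $M\ge\alpha+\tfrac12$, e.g.\ for $\alpha=1$, $M=2$ the left-hand side equals $\int(1+|\mathrm{Im}\,z|)^{-4}\,dA=\infty$, so the paper's own argument is only valid for $M<\alpha+\tfrac12$, not the regime in which the lemma is applied.) But your proposed repair via sub-mean-value estimates and transversal slicing cannot succeed, because the statement you reduced to is simply false under the stated hypothesis. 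Take $f(z)=(z-\tfrac12)\sin(\pi z)\,e^{\frac{\pi}{2}z^2}$, whose zero set $\Lambda=\mathbb Z\cup\{\tfrac12\}$ consists of simple zeros. Since $|\sin(\pi z)|\le e^{\pi|\mathrm{Im}\,z|}$ and $e^{\pi|y|-\pi y^2}(1+|y|)^{M}$ is bounded for every $M$, one gets
\[
|f(z)|\,e^{-\frac{\pi}{2}|z|^2}\ \le\ C_M\,\frac{1}{(1+|z|)^{M-1}}\left(\frac{1+|z|}{1+|\mathrm{Im}\,z|}\right)^{M},\quad z\in\mathbb C,
\]
so the hypothesis of (1) holds with $\alpha=M-1\ge 1$; yet $f/(\cdot-1)\notin\mathcal F$, because on $\{\mathrm{Re}\,z\ge 2,\ |\mathrm{Im}\,z|\le\tfrac12\}$ one has $|f(z)/(z-1)|^2e^{-\pi|z|^2}\gtrsim\sin^2(\pi x)$, and $\int\sin^2(\pi x)\,dx=\infty$. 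The anisotropic slack $\bigl(\tfrac{1+|z|}{1+|\mathrm{Im}\,z|}\bigr)^{M}$ can be genuinely attained throughout the unit strip, and averaging over unit disks never leaves that strip, so holomorphy plus the one-sided bound does not force membership in $\mathcal F$. The conclusion of the lemma can still hold for such $f$, but only through a different witness (here $(z-\tfrac12)\sin(\pi z)e^{(\frac{\pi}{2}+b)z^2}/(z-1)$ with $-\pi<b<0$ works); constructing such a witness, or strengthening the hypothesis so that $f/(\cdot-\lambda)$ itself qualifies, is exactly the missing step, and neither your proposal nor the paper supplies it.
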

\begin{proof}
\begin{enumerate}[leftmargin=* ,parsep=0cm,itemsep=0cm,topsep=0cm]
\item Let $\lambda\in\Lambda$ be fixed. The function $z\longmapsto\frac{f(z)}{z-\lambda}$ is holomorphic in $\mathbb{C}$ and 
{\small\begin{eqnarray}
\left\|\frac{f}{.-\lambda}\right\|^2\ \lesssim\ \int_\mathbb{C} \frac{1}{(1+|z|)^{2\alpha+2}}\left(\frac{1+|z|}{1+|\textmd{Im} z|}\right)^{2M}dm(z)\ \asymp\ \int_\mathbb{C} \frac{dm(z)}{(1+|z|)^{2\alpha+2}}. \nonumber
\end{eqnarray}}
The integral in the right hand side of the last inequality converges if and only if $\alpha>0$. 
\item Let $g\in\mathcal{F}$ be such that $\Lambda\subset\mathrm{Z}(g)$. There exists an entire function $h$ such that $g(z)=f(z)h(z)$, for every $z\in\mathbb{C}.$ It follows from the estimate in \eqref{2Lem} that 
\begin{eqnarray}
|h(z)|\frac{\mathrm{dist}(z,\Lambda)}{(1+|z|)^\alpha}\left(\frac{1+|\textmd{Im} z|}{1+|z|}\right)^{M}\ \lesssim\ \left|f(z)h(z)\right|e^{-\frac{\pi}{2}|z|^2}\ \lesssim\ 1,\quad z\in\mathbb{C}. \nonumber
\end{eqnarray}
Thus, the function $h$ is a polynomial of degree $k\leq \alpha.$ Integrating both sides of the last inequality we get
{\small\begin{eqnarray}
\int_\mathbb{C} \frac{\mathrm{dist}(z,\Lambda)^2}{(1+|z|)^{2(\alpha-k)}} \left(\frac{1+|\textmd{Im} z|}{1+|z|}\right)^{2M}\ dm(z) \lesssim\ \int_\mathbb{C} \left|f(z)h(z)\right|^2e^{-\pi |z|^2}\ dm(z)\ <\ \infty. \nonumber
\end{eqnarray}}
Since $\alpha\leq 1$, the integral in the left hand side of the latter inequality diverges, and, hence, $g$ is identically zero.
\end{enumerate}
\end{proof}

\begin{proof}[\bf Proof of Theorem \ref{thm1}]
Let $\Lambda$ be a sequence of complex numbers. Let $\varepsilon$ be a positive number such that $\delta=\delta(\Lambda)+\varepsilon<\nu$ and $\hat{\delta}=\hat{\delta}(\Lambda)-\varepsilon>\nu-1.$ Let $G_\Lambda$ be the infinite product associated to $\Lambda$. According to Lemma \ref{lem1}, the function $G_\Lambda$ satisfies the estimate 
\begin{eqnarray}
\frac{\mathrm{dist}(z,\Lambda)}{(1+|z|)^{\nu-\hat{\delta}}}\left(\frac{1+|\textmd{Im} z|}{1+|z|}\right)^M \ \lesssim\ \left|G_\Lambda(z)\right| e^{-\frac{\pi}{2}}\ \lesssim\ \frac{\mathrm{dist}(z,\Lambda)}{(1+|z|)^{\nu-\delta}}\left(\frac{1+|z|}{1+|\textmd{Im} z|}\right)^M .\nonumber
\end{eqnarray}
Since $\nu-1<\hat{\delta}\leq \delta<\nu$, Lemma \ref{gen} and the above estimates complete the proof.
\end{proof}

\begin{proof}[\bf Proof of Theorem \ref{thm2}]
By Lemma \ref{lem1}, we have
\begin{eqnarray}\label{KK}
\left|G_{\Lambda_{\beta,\nu}}(z)\right|\ \asymp\ \frac{e^{\frac{\pi}{2}|z|^2}}{(1+|z|)^{\nu+2\beta}}\mathrm{dist}(z,\Lambda_{\beta,\nu}),\quad \ z\in \mathbb{C}.
\end{eqnarray}
Lemma \ref{gen} and this estimate yield now the conclusions of Theorem \ref{thm2}. 
\end{proof}

\begin{proof}[\bf Proof of Theorem \ref{corA}]
In order to prove Theorem \ref{corA}, it suffices to check the conditions of Theorem \ref{thm1}. Indeed, by a simple computation, we obtain
\[|\lambda_{m+1,n}-\lambda_{m,n}|=|\lambda_{m+1,n}-\gamma_{m+1,n}+1+\gamma_{m,n}-\lambda_{m,n}|\geq 1-2\delta/N,\]
and
\[|\lambda_{m,n+1}-\lambda_{m,n}|=|\lambda_{m,n+1}-\gamma_{m,n+1}+i+\gamma_{m,n}-\lambda_{m,n}|\geq 1-2\delta/N.\]
Since $\delta<\frac{\min\{\nu,1-\nu\}}{2}$, the sequence $\Lambda$ is separated and hence condition \ref{thm10} of Theorem \ref{thm1} holds. We also have
\[\left|\gamma_{m,n}-\lambda_{m,n} \right|^2 =  |\gamma_{m,n}|^2\left[(1-e^{\delta_{m,n}})^2 + 4e^{\delta_{m,n}}\sin^2(\theta_{m,n}/2)\right], \]
where  $\lambda_{m,n}=\gamma_{m,n} e^{\delta_{m,n}}e^{i\theta_{m,n}}$. Using this equality and \eqref{cd}, we get
\[
\left|1-e^{\delta_{m,n}}\right|\lesssim 1/|\gamma_{m,n}|\ \ \mbox{and} \quad e^{\delta_{m,n}/2}\left|\sin(\theta_{m,n}/2)\right|\lesssim 1/|\gamma_{m,n}|,\quad n\geq 1.
\]
The sequences $\left(m\delta_{m,n}\right)$ and $\left(m\theta_{m,n}\right)$ are then bounded. Moreover,
{\small\begin{eqnarray}
\left|\underset{1\leq n\leq N}{\sum_{-s\leq k\leq s}} \delta_{k,n}\right|
   & \leq & \underset{1\leq n\leq N}{\sum_{-s\leq k\leq s}} \left|e^{\delta_{k,n}}-1\right|+O(1)\nonumber\\
   & \leq & \frac{\delta}{N} \underset{1\leq n\leq N}{\sum_{-s\leq k\leq s}}1/|\gamma_{k,n}| + O(1)\nonumber\\
   & = & 2\delta\log(s) + O(1). \nonumber
\end{eqnarray}}
It follows from this that 
\[
\limsup_{s\rightarrow\infty}\ \frac{1}{\log s}\ \left|\underset{1\leq n\leq N}{\sum_{-s\leq k\leq s}} \delta_{k,n}\right|\ \leq 2\delta.
\]
The condition \ref{thm12} of Theorem \ref{thm1} holds. The proof is now complete.
\end{proof}

\begin{proof}[\bf Proof of Theorem \ref{thm1full}]
The proof is the same as that of Theorem \ref{thm1}; we use Lemma \ref{lem1full} and Lemma \ref{gen}.
\end{proof}

\begin{proof}[\bf Proof of Theorem \ref{corAfull}]
To prove Theorem \ref{corAfull}, it is sufficient to verify that the conditions of Theorem \ref{thm1full} hold. Indeed, we have
\[|\lambda_{m+1,n}-\lambda_{m,n}|=|\lambda_{m+1,n}-\gamma_{m+1,n}+1+\gamma_{m,n}-\lambda_{m,n}|\geq 1-2\delta/|\gamma_{m,n}|
,\]
and
\[|\lambda_{m,n+1}-\lambda_{m,n}|=|\lambda_{m,n+1}-\gamma_{m,n+1}+i+\gamma_{m,n}-\lambda_{m,n}|\geq 1-2\delta/|\gamma_{m,n}|
.\]
The separation of the sequence $\Lambda$ follows from this and hence condition \ref{thm10full} of Theorem \ref{thm1full} holds. Condition \eqref{cdfull} and the equality
\[\left|\gamma_{m,n}-\lambda_{m,n} \right|^2 =  |\gamma_{m,n}|^2\left[(1-e^{\delta_{m,n}})^2 + 4e^{\delta_{m,n}}\sin^2(\theta_{m,n}/2)\right],
\]
where  $\lambda_{m,n}=\gamma_{m,n} e^{\delta_{m,n}}e^{i\theta_{m,n}}$, imply that
\[
\left|1-e^{\delta_{m,n}}\right|\lesssim 1/|\gamma_{m,n}|^2\ \ \mbox{and} \quad e^{\delta_{m,n}/2}\left|\sin(\theta_{m,n}/2)\right|\lesssim 1/|\gamma_{m,n}|^2,\quad n\geq 1.
\]
Consequently, $\left(\gamma_{m,n}^2\delta_{m,n}\right)$ and $\left(\gamma_{m,n}^2\theta_{m,n}\right)$ are bounded. Furthermore,
{\small\begin{eqnarray}
\left|\sum_{|\gamma_{m,n}|\leq R} \delta_{m,n}\right|
   & \leq & \sum_{|\gamma_{m,n}|\leq R} \left|e^{\delta_{m,n}}-1\right|+O(1)\nonumber\\
   & \leq & \delta \sum_{|\gamma_{m,n}|^2 \leq R}1/|\gamma_{m,n}|^2 + O(1)\nonumber\\
   & = & \delta\log(R) + O(1). \nonumber
\end{eqnarray}}
It follows that 
\[
\limsup_{R\rightarrow\infty}\ \frac{1}{\log R}\ \left|\sum_{|\gamma_{m,n}|^2 \leq R} \delta_{k,n}\right|\ \leq \delta.
\]
The condition \ref{thm12full} of Theorem \ref{thm1full} holds. The proof is now complete.
\end{proof}

\begin{proof}[\bf Proof of Theorem \ref{thm2full}]
By Lemma \ref{lem1full}, we have
{\small\begin{eqnarray}
\frac{\mathrm{dist}(z,\widetilde{\Lambda}_{\beta,\nu})}{(1+|z|)^{\nu+\beta}}\left(\frac{1+|\textmd{Im} z|}{1+|z|}\right)^M \ \lesssim\ \left|G_{\widetilde{\Lambda}_{\beta,\nu}}(z)\right| e^{-\frac{\pi}{2}|z|^2}\ \lesssim\ \frac{\mathrm{dist}(z,\widetilde{\Lambda}_{\beta,\nu})}{(1+|z|)^{\nu+\beta}}\left(\frac{1+|z|}{1+|\textmd{Im} z|}\right)^M .\nonumber
\end{eqnarray}}
Lemma \ref{gen} and this estimate imply  now the conclusions of Theorem \ref{thm2full}.
\end{proof}

In what follows we prove the results of  Subsection 1.3. The proofs of Theorems \ref{thm2ALS} and \ref{thm1ALS} are based on the estimates obtained in Lemmas \ref{lem1ALS} and \ref{gen}. We begin by the proof of Theorem \ref{thm1ALS}.

\begin{proof}[\bf Proof of Theorem \ref{thm1ALS}]
Let $\Lambda=\{\lambda_n,\ -\sqrt{2n},\ \pm i\sqrt{2n}\}\cup\{\pm 1\}$ be a sequence of $\mathbb{C}$ satisfying the conditions of Theorem \ref{thm1ALS}. 
The infinite product $G_\Lambda$ satisfies the estimate
\begin{equation}
|G_\Lambda(z)|e^{-\frac{\pi}{2}|z|^2}\ \lesssim\ \frac{(1+|z|)^{2\delta+2M}\left|G_\Gamma(z)\right|}{(1+|\textmd{Im} z^2|)^M},\quad |\textmd{Re} z|\geq 1,\ |\textmd{Im} z|\geq 1. 
\end{equation}
By Lemma \ref{gen} and the fact that $\delta<1/4$, we get that $\Lambda\setminus\{\lambda\}$ is a zero set of $\mathcal{F}$. 

Now, let $\varepsilon$ be a small positive number such that $\delta=\delta(\Lambda)+\varepsilon<\frac{1}{4}.$ Assume that there exists a function $F\in\mathcal{F}\setminus\{0\}$ vanishing on $\Lambda$, and write $F(z)=h(z)G_\Lambda(z)$, for some entire function $h$. By Lemma \ref{lem1ALS}, we get
\begin{eqnarray}
\left|\frac{h(z) G_\Gamma(z)}{(1+|z|)^{2\delta}}\right|\ \lesssim\ \left|h(z)G_\Lambda(z)\right| = |F(z)|,\quad z\in\mathbb{C}\setminus\mathcal{R},\nonumber
\end{eqnarray}
and \begin{eqnarray}
\left|\frac{h(z) G_\Gamma(z)}{(1+|z|)^{2\delta+2M}}\right|\ \frac{\mathrm{dist}(z,\Lambda)}{\mathrm{dist}(z,\Gamma)} \lesssim\ \left|h(z)G_\Lambda(z)\right| = |F(z)|,\quad z\in\mathcal{R}\setminus\Gamma.\nonumber
\end{eqnarray}
Therefore, by Lemma \ref{sep}, we get
{\small\begin{equation}\label{prof}
\int_\mathcal{R}\left|\frac{h(z)G_\Gamma(z)}{(1+|z|)^{2\delta+2M}}\right|^2 d\mu(z)\ \lesssim\ \int_\mathcal{R}\left|F(z)\right|^2\frac{\mathrm{dist}(z,\Gamma)^2}{\mathrm{dist}(z,\Lambda)^2}d\mu(z)\lesssim \int_\mathbb{C}\left|F(z)\right|^2d\mu(z)< \infty,
\end{equation}}
and {\small\begin{equation}\label{prof11}
\int_{\mathbb{C}\setminus\mathcal{R}}\left|\frac{h(z) G_\Gamma(z)}{(1+|z|)^{2\delta+2M}}\right|^2d\mu(z)\leq \int_{\mathbb{C}\setminus\mathcal{R}}\left|\frac{h(z) G_\Gamma(z)}{(1+|z|)^{2\delta}}\right|^2d\mu(z)\ \lesssim\ \int_{\mathbb{C}\setminus\mathcal{R}} |F(z)|^2d\mu(z).
\end{equation}}
Now, fix $\gamma\in\Gamma$. We have $|z|^{2\delta}\ \lesssim\ |z-\gamma|^{1/2}\ \leq\  |z-\gamma|,\ \mbox{for every}\ |z|>2|\gamma|.$ Consequently,
$$\int_\mathbb{C} \left|\frac{h(z)G_\Gamma(z)}{(z-\gamma)P_{2M}(z)}\right|^2d\mu(z)\ <\ \infty,$$
for some polynomial $P_{2M}$ of degree $2M$ vanishing at $2M$ points of $\Gamma\setminus\{\gamma\}$. This means that  $z\mapsto\frac{h(z)G_\Gamma(z)}{(z-\gamma)P_{2M}} ~~\in \mathcal{F}$. On the other hand, since $\Gamma\setminus\{\gamma\}$ is a maximal zero sequence for $\mathcal{F}$, the function $h$ must be a polynomial of degree at most $2M$ (see \cite{zhu2011maximal,zhu2012analysis}). Hence, \eqref{prof11} becomes
\begin{equation}
\int_{\mathbb{C}\setminus\mathcal{R}}\left|\frac{G_\Gamma(z)}{(1+|z|)^{2\delta-k}}\right|^2d\mu(z)\asymp \int_{\mathbb{C}\setminus\mathcal{R}}\left|\frac{z^k G_\Gamma(z)}{(1+|z|)^{2\delta}}\right|^2d\mu(z) <\infty,
\end{equation}
for some integer $k\leq 2M$. Lemma \ref{7.3} ensures that $2\delta -k > 1/2$ and hence $h$ is constant. The identity \eqref{prof11} implies that the function $z\mapsto\frac{h(z)G_\Gamma(z)}{(1+|z|)^{2\delta}}$  belongs to $ L^2(\mathbb{C}\setminus\mathcal{R},d\mu)$, which is possible, by Lemma \ref{7.3}, only if $h=0$ (since $2\delta<1/2$). Finally, $\Lambda$ is a uniqueness set for $\mathcal{F}$. 
\end{proof}

\begin{proof}[\bf Proof of Theorem \ref{thm2ALS}]
By Lemma \ref{lem1ALS}, we have
\begin{eqnarray}
\left|G_{\Lambda_{\beta}}(z)\right|\ \asymp\ \frac{|G_\Gamma(z)|}{(1+|z|)^{2\beta}}\frac{\mathrm{dist}(z,\Lambda_{\beta})}{\mathrm{dist}(z,\Gamma)},\quad \ z\in \mathbb{C}\setminus\Gamma.\nonumber
\end{eqnarray}
Arguing as in the proof of Theorem \ref{thm1ALS} one can easily obtain the conclusions of Theorem \ref{thm2ALS}.
\end{proof}

\begin{proof}[\bf Proof of Theorem \ref{corAALS}]
The proof of Theorem \ref{corAALS} is similar to that of Theorems \ref{corA} and \ref{corAfull} and uses Theorem \ref{thm1ALS}. By a simple computation, we obtain
\[\sqrt{2n}\left|\lambda_{n+1}-\lambda_n\right|\geq (1-2\delta) + O(1/n).\]
Since $\delta<1/2$,  condition \ref{thm10ALS} of Theorem \ref{thm1ALS} holds. We also have
\[\left|\gamma_n-\lambda_n \right|^2 =  2n\left[(1-e^{\delta_n})^2 + 4e^{\delta_n}\sin^2(\theta_n/2)\right],\quad n\geq 1,\]
where  $\lambda_n=\sqrt{2n} e^{\delta_n}e^{i\theta_n}$. This equality together with \eqref{cdALS} ensures that 
the sequences  $\left(n\delta_n\right)$ and $\left(n\theta_n\right)$ are bounded. Moreover,
\begin{eqnarray}
\left|\sum_{k\leq n} \delta_k\right|
    \leq \sum_{k\leq n} \left|e^{\delta_k}-1\right|+O(1)
    \leq \frac{\delta}{2}\sum_{k\leq n}1/k + O(1)    = \frac{\delta}{2}\log(n) + O(1). \nonumber
\end{eqnarray}
Condition \ref{thm12ALS} of Theorem \ref{thm1ALS} follows immediately and the proof is complete.
\end{proof}

\begin{proof}[\bf Proof of Theorem \ref{corGaborALS}]
First, we consider the infinite product associated with $\Lambda$ defined as
\begin{align}\label{G}
G_\Lambda(z)=\lim_{r\rightarrow\infty} \prod_{\lambda\in\Lambda,\ |\lambda|\leq r}\left(1-\frac{z}{\lambda}\right),\quad z\in\mathbb{C}.
\end{align}
Using the same arguments as in the proof of Lemma \ref{lem1ALS}, we obtain
\begin{align}\label{4.2}
\frac{\mathrm{dist}(z,\Lambda)}{\mathrm{dist}(z,\Gamma)}\frac{(1+|\textmd{Im} z^2|)^M}{\left(1+|z|\right)^{2\delta+2M}}|G_\Gamma(z)| \lesssim \left|G_\Lambda(z)\right|,
\end{align}
and
\begin{align}\label{aa}
\left|G_\Lambda(z)\right|\lesssim\left|G_\Gamma(z)\right| \frac{(1+|z|)^{2\delta+2M}}{(1+|\textmd{Im} z^2|)^M}\frac{\mathrm{dist}(z,\Lambda)}{\mathrm{dist}(z,\Gamma)},
\end{align}
for every $z\in\mathbb{C}\setminus\Gamma$. Let $\gamma\in\Gamma$ be fixed and $P_{2M}$ be a polynomial of degree $2M$ vanishing at $2M$ points of $\Gamma\setminus\{\gamma\}$. By \eqref{4.2}, we have
\begin{equation}\label{dd}
\left|\frac{h(z)G_{\Gamma}(z)}{(z-\gamma)P_{2M}(z)|}\right|\lesssim \left|\frac{h(z)G_{\Gamma}(z)}{(1+|z|)^{2\delta}}\left(\frac{1+|\textmd{Im} z^2|}{1+|z|^{2}}\right)^M\right|\lesssim \left|F(z)\right|\frac{\mathrm{dist}(z,\Gamma)}{\mathrm{dist}(z,\Lambda)}.
\end{equation}
Next, we argue as in the proof of Theorem \ref{thm1} to obtain that $h$ is identically zero.\\

Now, we prove that $\Lambda\setminus\{\lambda\}$ is a zero set for $\mathcal{F}$, for fixed $\lambda\in\Lambda$. Indeed, identity \eqref{aa} implies that
\begin{equation}\label{4.5}
\frac{\mathrm{dist}(z,\Gamma)}{\mathrm{dist}(z,\Lambda)}\left|\frac{G_\Lambda(z)}{z-\lambda}\right| \lesssim \frac{\left|G_\Gamma(z)\right|}{(1+|z|)^{1-2\delta}}\left(\frac{1+|z|^2}{1+|\textmd{Im} z^2|}\right)^M,\quad |z|\geq 2|\lambda|,\ \ z\in\mathbb{C}\setminus\Lambda.
\end{equation}
Using this estimate and Lemmas \ref{7.3} and \ref{sep}, we obtain that $\frac{G_\Lambda}{z-\lambda}$ belongs to $\mathcal{F}$ and, hence, $\Lambda\setminus\{\lambda\}$ is a zero sequence for $\mathcal{F}$. This completes the proof.
\end{proof}

\section{Remarks}\label{rmks}

\subsection{Optimality of the condition on $\theta_\gamma$}
Here, we give two examples of sequences $(\theta_n)$ such that $(n\theta_n)$ goes (slowly) to infinity and for which Theorem \ref{thm1} and Theorem \ref{thm1ALS} do not hold. To do this, we need the following lemma. Set
\[ \mathcal{S} := \left\{z=|z|e^{i\theta}\ :\ \theta\in [\pi/16,\pi-\pi/16]\cup [\pi+\pi/16,2\pi-\pi/16] \right\}.\]

\begin{lemma}\label{lemoptimality}
Let $s\in(1/2,1)$ and consider the sequence $\theta_n=\pi/n^s$.  Let $\varphi$ be the meromorphic function defined as
\[\varphi(z)=\prod_{n\geq 1} \frac{1-z/n e^{i\theta_n}}{1-z/n} ,\quad z\notin\mathbb{N}.\]
Then, there exist $c_1,c_2>0$ such that 
\begin{equation}
e^{-c_1\sin(\theta)|z|^{1-s}}\ \lesssim\ \left|\varphi(z)\right|\ \lesssim\ e^{-c_2\sin(\theta)|z|^{1-s}},\quad z=|z|e^{i\theta}\in\mathcal{S}.
\end{equation}
\end{lemma}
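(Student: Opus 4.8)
The plan is to pass to logarithms and reduce the estimate to a single scalar sum that can be compared with an explicit integral. Write $w_n := n e^{i\theta_n}$, so that $\varphi$ vanishes at the points $w_n$ and has poles at the integers, and set $z = re^{i\theta}$ and $D_n := |n-z|^2 = n^2 - 2nr\cos\theta + r^2$. Since $|w_n| = n$, the prefactor $|n/w_n|$ equals $1$ and each logarithmic term collapses to a ratio of two squared moduli:
\[
\log|\varphi(z)| = \sum_{n\geq 1}\bigl(\log|w_n - z| - \log|n - z|\bigr) = \frac12\sum_{n\geq 1}\log\bigl(1 - x_n\bigr),\qquad x_n := \frac{2nr\bigl(\cos(\theta-\theta_n)-\cos\theta\bigr)}{D_n}.
\]
Note that $1 - x_n = |w_n - z|^2/|n-z|^2 > 0$, so the logarithms are well defined. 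The goal is to show that this sum equals $-\pi\, C_s(\theta)\,\sin\theta\, r^{1-s}$ up to a lower-order error, where $C_s(\theta)$ is bounded above and below by positive constants on $\mathcal S$.

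First I would linearize. Using $\cos(\theta-\theta_n)-\cos\theta = 2\sin(\theta-\tfrac{\theta_n}{2})\sin(\tfrac{\theta_n}{2}) = \theta_n\sin\theta + O(\theta_n^2)$ together with $\log(1-x_n) = -x_n + O(x_n^2)$, and recalling $\theta_n = \pi/n^s$, the main contribution is
\[
\log|\varphi(z)| = -\pi\,\sin\theta\, r\sum_{n\geq 1}\frac{n^{1-s}}{D_n} + (\text{errors}).
\]
The expansion of $\log(1-x_n)$ is legitimate because $\textmd{Im}\, z = r\sin\theta\gtrsim r$ on $\mathcal S$, while $w_n$ and $n$ lie within $O(n^{1-s})$ of each other and of the real axis; hence $z$ is comparably distant from both $w_n$ and $n$, and $x_n$ stays bounded away from $1$.

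The heart of the argument is the estimate $r\sum_{n\geq 1} n^{1-s}/D_n \asymp r^{1-s}$, uniformly on $\mathcal S$. I would compare the sum with the corresponding integral and use the scaling $t = ru$:
\[
r\int_0^\infty \frac{t^{1-s}}{t^2 - 2tr\cos\theta + r^2}\,dt = r^{1-s}\int_0^\infty \frac{u^{1-s}}{u^2 - 2u\cos\theta + 1}\,du =: r^{1-s}\,C_s(\theta).
\]
Since $0 < 1-s < 1$, the integral $C_s(\theta)$ converges, and because $|\cos\theta|\leq\cos(\pi/16) < 1$ on $\mathcal S$ one has $u^2 - 2u\cos\theta + 1\asymp u^2 + 1$, so that $C_s(\theta)\asymp 1$ uniformly on $\mathcal S$.

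The remaining work, and the main obstacle, is to justify the sum--integral comparison uniformly in $z$ and to dispose of the errors. The summand $n\mapsto n^{1-s}/D_n$ is unimodal with maximum near $n\approx r\cos\theta$ and varies on the scale $r\sin\theta\gg 1$ (for $|z|$ large and $\theta$ bounded away from $0$ and $\pi$); comparing the sum to the integral on the two monotone pieces on either side of the peak therefore costs only $O(r^{-s}) = o(r^{1-s})$. For the errors, both the quadratic remainder $\sum x_n^2$ and the contribution of the $O(\theta_n^2)$ terms, namely $r\sum n^{1-2s}/D_n$, are $O(r^{1-2s})$ by the same integral comparison (the relevant integral still converges for $s<1$), and $r^{1-2s} = o(r^{1-s})$. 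Collecting everything yields $\log|\varphi(z)| = -(\pi + o(1))\,C_s(\theta)\,\sin\theta\, r^{1-s}$, and the claimed two-sided bound follows on each connected component of $\mathcal S$, where $\sin\theta$ has a fixed sign and $C_s(\theta)\asymp 1$, upon choosing the constants so that $0 < c_2 < \pi\inf_{\mathcal S} C_s \leq \pi\sup_{\mathcal S} C_s < c_1$.
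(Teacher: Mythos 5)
Your proof is correct and follows essentially the same route as the paper: both pass to $\log|\varphi|$, expand each factor of the product to first order, and reduce everything to the estimate $r\sin\theta\sum_{n} n\theta_n/|n-z|^2 \asymp \sin\theta\,|z|^{1-s}$ on $\mathcal{S}$. The differences are only organizational — the paper splits the sum at $n=2|z|$ and asserts the final asymptotics, while you treat all $n$ at once and justify that last step by the scaling substitution $t=ru$ and a sum–integral comparison, a detail the paper leaves implicit.
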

\begin{proof}
Let $z=|z|e^{i\theta}\in\mathcal{S}$. We have
{\small\begin{align}
\log\varphi(z) & = \sum_{n\leq 2|z|} + \sum_{n> 2|z|} \log\left|\frac{1-z/n e^{2i\theta_n}}{1-z/n}\right| \nonumber\\
 & = \sum_{n\leq 2|z|} \log\left|1-\frac{n-ne^{i\theta_n}}{n-z}\right| +\sum_{n>2|z|} \log\left|1-\frac{1-e^{i\theta_n}}{1-z/n}\right| \nonumber\\
    &  = - \sum_{n\leq 2|z|} \textmd{Re}\left(\frac{n-ne^{i\theta_n}}{n-z}\right) + O\left(\frac{n^{2-2\nu}}{|z|^2}\right) -\sum_{n>2|z|} \textmd{Re}\left(\frac{1-e^{i\theta_n}}{1-z/n}\right)+ O\left(\theta_n^2\right) \nonumber \\
    &  = -|z|\sin(\theta)\sum_{n\leq 2|z|} \frac{\sin(\theta_n)}{n\left|1-z/n\right|^2}  - \sum_{n>2|z|} \frac{|z|\sin(\theta)}{n}\frac{\theta_n}{\left|1-z/n\right|^2} +O(1)\nonumber\\
    & \asymp -\sin(\theta)|z|^{1-s} + O(1). \label{*}
\end{align}}
\end{proof}
Now, we prove the following proposition
\begin{proposition}\label{prop1}
Let $s>0$ and let  $\Lambda =\left\{\pm \lambda_n,\ \pm i\sqrt{2n},\quad n\geq 1 \right\}\cup \{\pm 1\}$, where $\lambda_n:=\sqrt{2n}e^{i\theta_n}$ and $\theta_n=\pi/n^s$, for every $n\geq 1$. If $s\in(1/2,1)$, then $\Lambda$ is not a uniqueness set of zero excess for $\mathcal{F}.$
\end{proposition}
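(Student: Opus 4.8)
The plan is to contradict the zero-excess requirement directly: I will show that for \emph{every} $\lambda_0\in\Lambda$ the set $\Lambda\setminus\{\lambda_0\}$ fails to be a zero set for $\mathcal F$, which already negates the defining property of a set of uniqueness of zero excess (regardless of whether $\Lambda$ itself is a uniqueness set). The starting point is the factorization $G_\Lambda(z)=G_\Gamma(z)\,\varphi(z^2/2)$, where $G_\Gamma(z)=\frac{z^2-1}{\pi z^2}\sin(\frac{\pi}{2}z^2)$ and $\varphi$ is the product of Lemma \ref{lemoptimality} built from the angles $2\theta_n$; the poles of $\varphi$ on the positive real axis cancel the zeros of $G_\Gamma$ there, so $G_\Lambda$ is entire and vanishes exactly on $\Lambda$. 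The substitution $w=z^2/2$ sends the four diagonal rays $\arg z\in\{\pi/4,3\pi/4,5\pi/4,7\pi/4\}$ onto the two imaginary rays $\arg w\in\{\pi/2,3\pi/2\}$, both of which lie inside the sector $\mathcal S$ of Lemma \ref{lemoptimality}.

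Next I would record the size of $G_\Lambda$ on these rays. Since $|\sin(\frac{\pi}{2}z^2)|\asymp e^{\frac{\pi}{2}|\textmd{Im}\,z^2|}$ and $|\textmd{Im}\,z^2|=|z|^2$ exactly on each diagonal, one has $|G_\Gamma(z)|e^{-\frac{\pi}{2}|z|^2}\asymp 1$ there. Feeding $\arg w=3\pi/2$ (that is, $\arg z=3\pi/4$ or $7\pi/4$) and $\arg w=\pi/2$ (that is, $\arg z=\pi/4$ or $5\pi/4$) into the two-sided estimate of Lemma \ref{lemoptimality} yields, for some $\kappa>0$,
\[
|G_\Lambda(z)|e^{-\frac{\pi}{2}|z|^2}\ \gtrsim\ e^{\kappa|z|^{2(1-s)}}\quad(\arg z=3\pi/4,\ 7\pi/4),
\]
\[
|G_\Lambda(z)|e^{-\frac{\pi}{2}|z|^2}\ \lesssim\ e^{-\kappa|z|^{2(1-s)}}\quad(\arg z=\pi/4,\ 5\pi/4).
\]
Thus $G_\Lambda$ outgrows the Fock weight along the two antipodal \emph{bad} rays and decays along the two \emph{good} ones.

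Now suppose, for contradiction, that $\Lambda\setminus\{\lambda_0\}$ is a zero set. Any $F\in\mathcal F\setminus\{0\}$ with $\mathrm Z(F)=\Lambda\setminus\{\lambda_0\}$ factors as $F=\frac{G_\Lambda}{z-\lambda_0}e^{p}$ with $\deg p\le 2$, since functions of $\mathcal F$ have order at most $2$; write $p(z)=az^2+bz+c$. The reproducing-kernel bound $|F(z)|\le\|F\|_{\mathcal F}\,e^{\frac{\pi}{2}|z|^2}$ forces $|F(z)|e^{-\frac{\pi}{2}|z|^2}$ to stay bounded, so it suffices to find a ray on which it is unbounded. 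On the diagonals $\textmd{Re}(az^2)=\pm|z|^2\textmd{Im}(a)$, with $+$ on the bad rays and $-$ on the good rays, and I would run the case analysis on $a$. If $\textmd{Im}(a)>0$, the quadratic term produces order-two blow-up on the bad rays; if $\textmd{Im}(a)<0$, it produces order-two blow-up on the good rays, where $G_\Lambda$ decays only at the sub-quadratic rate $e^{-\kappa|z|^{2(1-s)}}$; and if $a\in\mathbb R$, then $\textmd{Re}(az^2)=0$ on every diagonal, so along the bad rays $|F(z)|e^{-\frac{\pi}{2}|z|^2}\asymp e^{\kappa|z|^{2(1-s)}+\textmd{Re}(bz)+O(\log|z|)}$, and since $\textmd{Re}(bz)$ is odd under $z\mapsto-z$ it is nonnegative on at least one of the two antipodal bad rays, where the surviving factor $e^{\kappa|z|^{2(1-s)}}$ makes the expression unbounded. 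In every case $F\notin\mathcal F$, a contradiction, so $\Lambda\setminus\{\lambda_0\}$ is not a zero set; as $\lambda_0$ is arbitrary, $\Lambda$ is not a set of uniqueness of zero excess.

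The main obstacle is precisely this last step: checking that no quadratic correction $e^{p}$ can neutralize the growth on \emph{both} antipodal bad rays at once without manufacturing new order-two growth on a good ray. What makes it go through is the interplay of two structural facts. First, the two bad directions are antipodal, so the only even multiplier $e^{az^2}$ treats them identically while the odd multiplier $e^{bz}$ helps one only at the cost of the other. Second, the decisive exponent $2(1-s)$ lies strictly between $0$ and $2$: it is small enough that a genuine order-two exponential is always too strong and ruins a good ray, yet positive, so that on any untreated bad ray the growth $e^{\kappa|z|^{2(1-s)}}$ truly diverges. This is exactly the regime that escapes the polynomial-type corrections controlled by Lemma \ref{gen}, which is why the boundedness hypothesis on $(n\theta_n)$ in Theorem \ref{thm1ALS} cannot be dropped.
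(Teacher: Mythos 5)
Your proposal is correct and follows essentially the same route as the paper: the same product $G_\Lambda$, the growth/decay estimates on the four diagonal rays obtained from Lemma \ref{lemoptimality} (applied with the doubled angles via $w=z^2/2$), the Hadamard factorization $F=\frac{G_\Lambda}{z-\lambda_0}e^{az^2+bz+c}$, and a contradiction from the fact that no quadratic exponential factor can absorb the growth $e^{\kappa|z|^{2(1-s)}}$ on both antipodal bad rays without creating order-two growth on a good ray. The only difference is bookkeeping: the paper writes out four simultaneous inequalities at $\arg z=\pi/4+k\pi/2$, $k=0,1,2,3$, and solves them to force $\alpha_1=\alpha_2=\beta_2=0$, whereas you run a case analysis on the sign of $\textmd{Im}(a)$ and use the oddness of $\textmd{Re}(bz)$ on the antipodal pair --- the same argument in different clothing.
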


\begin{proof}
Let $G_\Lambda$ be the infinite product associated  to $\Lambda$, defined as
\[
G_\Lambda(z) = \prod_{n\geq 1}\left(1-\frac{z^2}{\lambda_n^2}\right)\left(1+\frac{z^2}{2n}\right).
\]
By the previous lemma we have
\begin{align}\label{gg}
\left|G_\Gamma(z)\right|e^{-c_1\sin(2\theta)|z|^{2-2\nu}} \lesssim \left|G_\Lambda(z)\right| \lesssim \left|G_\Gamma(z)\right|e^{-c_2\sin(2\theta)|z|^{2-2\nu}},\quad z=|z|e^{i\theta}\in \mathcal{S}.
\end{align}

Now, if for some $\lambda\in\Lambda$ the sequence  $\Lambda\setminus\{\lambda\}$ is a zero set for $\mathcal{F}$, then there exists a function $F\in\mathcal{F}$ vanishing exactly on $\Lambda\setminus\{\lambda\}$. By the Hadamard factorization theorem (see \cite{levin1996lectures}) and by the fact that every function of $\mathcal{F}$ is of order at most 2 and of finite type (see \cite{zhu2012analysis}), there exist two complex numbers $\alpha=\alpha_1+i\alpha_2$ and $\beta=\beta_1+i\beta_2$ such that $F(z)=\frac{G_\Lambda(z)}{z-\lambda}e^{\alpha z+\beta z^2}$, for all $z\in\mathbb{C}$. Hence,
\begin{eqnarray*}
\left|\frac{G_\Lambda(z)}{z-\lambda}e^{\alpha z+\beta z^2}\right| = O(e^{\frac{\pi}{2}|z|^2}),\quad z\in\mathbb{C}.
\end{eqnarray*}
This estimate and  \eqref{gg} imply that 
\begin{equation}
\left|\frac{G_\Gamma(z)}{1+|z|}e^{\alpha z+\beta z^2}\right|e^{-c_1\sin(2\theta)|z|^{2-2\nu}} = O(e^{\frac{\pi}{2}|z|^2}),\quad z\in\mathbb{C} ,\ |z|\geq 2|\lambda|.
\end{equation}
In particular, for $z=re^{i(\frac{\pi}{4}+k\frac{\pi}{2})}$, $k=0,1,2,3$, we get
{\small\begin{align*}
\frac{1}{1+r}\exp\left(\left[\alpha_1\cos(\frac{\pi}{4}+k\frac{\pi}{2}) -\alpha_2\sin(\frac{\pi}{4}+k\frac{\pi}{2})\right]r-(-1)^k\beta_2 r^2 -(-1)^kc_1 r^{2-2\nu}\right)
 = O(1)
\end{align*}}
for every $r>0\ \mbox{and}\ k=0,1,2,3$. Therefore, there exists $M >0$ such that
{\small$$\left\{\begin{array}{ccc}
-\beta_2 r^2 + \frac{\sqrt{2}}{2}(\alpha_1-\alpha_2) r - c_1 r^{2-2\nu} \leq M + \log(1+r), \\[0.1cm]
\beta_2 r^2 - \frac{\sqrt{2}}{2}(\alpha_1+\alpha_2) r + c_1 r^{2-2\nu} \leq M + \log(1+r), \\[0.1cm]
-\beta_2 r^2 - \frac{\sqrt{2}}{2}(\alpha_1-\alpha_2) r - c_1 r^{2-2\nu} \leq M + \log(1+r), \\[0.1cm]
\beta_2 r^2 + \frac{\sqrt{2}}{2}(\alpha_1+\alpha_2) r + c_1 r^{2-2\nu} \leq M + \log(1+r).
\end{array}
\right.$$}
These inequalities imply that $\alpha_1=0$, $\alpha_2=0$, $\beta_2=0$, and we obtain finally the  inequalities $c_1r^{2-2\nu} \leq M + \log(1+r)$ and $-c_1r^{2-2\nu} \leq M + \log(1+r)$ which cannot hold together since $c_1\neq 0$. Consequently, for every $\lambda\in\Lambda$, the sequence $\Lambda\setminus\{\lambda\}$ is not a zero set for $\mathcal{F}$ and so $\Lambda$ is not a uniqueness set of zero excess for $\mathcal{F}$.
\end{proof}

\begin{proposition}
Let $s>0$, $N\geq 1$. Set
  $$\Lambda = \{\gamma_{m,n}\in\Gamma_\nu\ :\ (m,n)\in\mathbb{Z}\times\mathbb{Z}\setminus\{1\}\}\cup \left\{(m+i)e^{i\theta_m}\ :\ m\in\mathbb{Z} \right\},$$
where $\theta_m=\pi/m^s$, for every $m\geq 1$ and $\theta_m=0$, for every $m\leq 0$. If $s\in(1/2,1)$, then $\Lambda$ is not a uniqueness set of zero excess for $\mathcal{F}.$
\end{proposition}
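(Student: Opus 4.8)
The plan is to imitate the proof of Proposition \ref{prop1}, trading the Ascensi--Lyubarskii--Seip geometry for the lattice geometry of $\Gamma_\nu$ and the perturbed half--axis for the perturbed row $n=1$. Write $\gamma_{m,1}=m+i$ and $\lambda_{m,1}=(m+i)e^{i\theta_m}=\gamma_{m,1}e^{i\theta_m}$, so that the perturbation affects only the row $n=1$, with $\delta_{m,1}=0$ and $\theta_{m,1}=\theta_m$, while $\lambda_{m,n}=\gamma_{m,n}$ for $n\neq 1$. First I would attach to $\Lambda$ the genus--two product $G_\Lambda$ of Lemma \ref{lem1}, whose convergence factors $\exp[z/\gamma_{m,n}+z^2/(2\gamma_{m,n}^2)]$ are built from the \emph{unperturbed} nodes. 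Although $(m\theta_m)$ is unbounded here, the product still converges: only the factors of the row $n=1$ are nontrivial, and there
\[
\Big|\log\Big[(1-z/\lambda_{m,1})\,e^{z/\gamma_{m,1}+z^2/(2\gamma_{m,1}^2)}\Big]\Big|\ \lesssim\ |z|\,\frac{\theta_m}{m}+|z|^2\,\frac{\theta_m}{m^2}+O\!\Big(\frac{|z|^3}{m^3}\Big),
\]
while $\sum_{m\geq 1}\theta_m/m=\pi\sum_{m\geq 1}m^{-1-s}<\infty$. Hence $G_\Lambda$ is entire and (since the points of $\Lambda$ are distinct) vanishes simply and exactly on $\Lambda$.

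The crux is an analogue of Lemma \ref{lemoptimality}. Since the convergence factors cancel in the quotient,
\[
\frac{G_\Lambda(z)}{G_{\Gamma_\nu}(z)}=\prod_{m\geq 1}\frac{1-z/\lambda_{m,1}}{1-z/\gamma_{m,1}},
\]
which is, up to the shift $m\mapsto m+i$, exactly the product $\varphi$ of Lemma \ref{lemoptimality}. Repeating that computation---splitting at $m=2|z|$, using $\log|1+w|=\textmd{Re}(w)+O(|w|^2)$ together with $1-e^{i\theta_m}=-i\theta_m+O(\theta_m^2)$, and observing that for $z=|z|e^{i\theta}\in\mathcal{S}$ one has $|(m+i)-z|\gtrsim|z|$---the main term is $-\textmd{Re}\sum_{m\leq 2|z|}(m+i)(1-e^{i\theta_m})/((m+i)-z)$ and I would obtain
\[
\log\left|\frac{G_\Lambda(z)}{G_{\Gamma_\nu}(z)}\right|\ \asymp\ -\sin(\theta)\,|z|^{1-s},\qquad z\in\mathcal{S}.
\]
Combined with Lemma \ref{lem1}$(2)$ at $\beta=0$, i.e. $|G_{\Gamma_\nu}(z)|e^{-\frac{\pi}{2}|z|^2}\asymp\mathrm{dist}(z,\Gamma_\nu)/(1+|z|)^\nu$, this pins down $|G_\Lambda|$ on $\mathcal{S}$ up to the single large factor $e^{-c\sin(\theta)|z|^{1-s}}$. \textbf{I expect this estimate to be the main obstacle:} one must verify that the quadratic error $\sum_{m\leq 2|z|}|w|^2\asymp|z|^{1-2s}$ tends to $0$---precisely where the hypothesis $s>1/2$ enters---and that the implied constants are uniform over the sector $\mathcal{S}$, on which $|\sin\theta|\geq\sin(\pi/16)$.

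Granting the estimate, the conclusion follows as in Proposition \ref{prop1}. Fix $\lambda\in\Lambda$, suppose $\Lambda\setminus\{\lambda\}$ were a zero set, and pick $F\in\mathcal{F}\setminus\{0\}$ vanishing exactly there. As every function of $\mathcal{F}$ has order at most $2$ and finite type, the Hadamard factorization gives $F(z)=\tfrac{G_\Lambda(z)}{z-\lambda}e^{\alpha z+\beta z^2}$ with $\alpha=\alpha_1+i\alpha_2$, $\beta=\beta_1+i\beta_2$, while $F\in\mathcal{F}$ forces $|F(z)|\lesssim e^{\frac{\pi}{2}|z|^2}$. Feeding the lower bound for $|G_\Lambda|$ into this and evaluating along the rays $z=re^{i(\pi/4+k\pi/2)}$, $k=0,1,2,3$, restricted to a sequence $r_j\to\infty$ on which $\mathrm{dist}(z,\Gamma_\nu)\gtrsim 1$, the factor $e^{\frac{\pi}{2}r^2}$ cancels and leaves
\[
\textmd{Re}(\alpha z+\beta z^2)-c\,\sin(\pi/4+k\pi/2)\,r^{1-s}\ \leq\ (\nu+1)\log(1+r)+O(1),
\]
where $\textmd{Re}(\beta z^2)$ contributes $(-\beta_2,+\beta_2,-\beta_2,+\beta_2)r^2$ and the sines $\sin(\pi/4+k\pi/2)$ run through $(+\tfrac{\sqrt2}{2},+\tfrac{\sqrt2}{2},-\tfrac{\sqrt2}{2},-\tfrac{\sqrt2}{2})$---so the sign pattern of the last term is $(-,-,+,+)$ rather than the $(-,+,-,+)$ of Proposition \ref{prop1}. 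Matching the $r^2$--coefficients forces $\beta_2=0$, matching the $r$--coefficients forces $\alpha_1=\alpha_2=0$, and the two lower rays $k=2,3$ then reduce to $c\,\tfrac{\sqrt2}{2}\,r^{1-s}\leq(\nu+1)\log(1+r)+O(1)$, impossible for large $r$ since $c>0$ and $1-s>0$. Hence $\Lambda\setminus\{\lambda\}$ is a zero set for no $\lambda\in\Lambda$, so $\Lambda$ violates the zero--excess requirement and is not a uniqueness set of zero excess for $\mathcal{F}$.
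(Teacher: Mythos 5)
Your proposal is correct, and its core is the same as the paper's: you form the genus-two product $G_\Lambda$, observe that $G_\Lambda/G_{\Gamma_\nu}$ is (up to the harmless shift $m\mapsto m+i$) the product $\varphi$ of Lemma \ref{lemoptimality}, and combine the resulting factor $e^{-c\sin(\theta)|z|^{1-s}}$ on $\mathcal{S}$ with the estimate $|G_{\Gamma_\nu}(z)|\asymp e^{\frac{\pi}{2}|z|^2}\mathrm{dist}(z,\Gamma_\nu)/(1+|z|)^{\nu}$ from Lemma \ref{lem1}$(2)$; you also correctly locate where $s>1/2$ enters (summability of the quadratic errors, $\sum_{m\le 2|z|}m^2\theta_m^2/|z|^2\asymp|z|^{1-2s}$) and where $s<1$ enters (positivity of the exponent $1-s$). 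Where you diverge is the endgame. The paper writes $f=h\,G_\Lambda/(z-\lambda)$ with $h$ entire, asserts from the two-sided sandwich on $\mathcal{S}$ that $h$ is constant, and then gets the contradiction from the blow-up of $e^{-c_1\sin(\theta)|z|^{1-s}}$ on the lower half of $\mathcal{S}$ where $\sin\theta<0$. You instead import the endgame of Proposition \ref{prop1}: Hadamard factorization $F=\frac{G_\Lambda}{z-\lambda}e^{\alpha z+\beta z^2}$ (legitimate here, since the discrepancy between your convergence factors $e^{z/\gamma+z^2/2\gamma^2}$ and the canonical ones built from $\lambda$ is an absolutely convergent exponential of a quadratic, absorbable into $\alpha,\beta$), followed by the four diagonal rays; your sign bookkeeping $(-,+,-,+)$ for $\beta_2 r^2$ versus $(-,-,+,+)$ for the $r^{1-s}$ term is right, it kills $\beta_2$ and $\alpha$, and the rays $k=2,3$ give the contradiction while $\beta_1$ harmlessly drops out since $\textmd{Re}(z^2)=0$ on the diagonals. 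This buys you something: the paper's one-line claim that $h$ is constant would itself require a Phragm\'en--Lindel\"of or $\cos\pi\rho$-type argument to control $h$ off the sector $\mathcal{S}$, whereas Hadamard hands you the exact form of the nuisance factor at once, at the mild cost of checking $\mathrm{dist}(z,\Gamma_\nu)\gtrsim 1$ along a sequence on the diagonal rays (immediate, since the diagonals pass through centers of lattice squares). So: same key lemma and same contradiction mechanism, but a more explicit and self-contained treatment of the entire cofactor than the paper's.
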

\begin{proof}
Let $G_\Lambda$ be the infinite product associated with $\Lambda$, as in Lemma \ref{lem1}. We first write
\[
\frac{G_\Lambda(z)}{G_{\Gamma_\nu}(z)} = \prod_{m\geq 1} \frac{1-z/(m+i)e^{\theta_m}}{1-z/m}.
\]
Recall first that $(2)$ in Lemma \ref{lem1} implies that $$\left|G_{\Gamma_{\nu}}(z)\right|\asymp e^{\frac{\pi}{2}|z|^2}\frac{\mathrm{dist}(z,\Gamma_{\nu})}{(1+|z|)^\nu}, \quad z\in\mathbb{C}.$$
{By Lemma \ref{lemoptimality} and the estimate of the function $G_{\Gamma_\nu}$, $G_\Lambda$ satisfies the estimates}
\begin{equation}
\frac{e^{\frac{\pi}{2}|z|^2}}{1+|z|^\nu}e^{-c_1\sin(\theta)|z|^{1-s}}\ \lesssim\ \left|G_\Lambda(z)\right|\ \lesssim\ \frac{e^{\frac{\pi}{2}|z|^2}}{1+|z|^\nu}e^{-c_2\sin(\theta)|z|^{1-s}},\quad z=|z|e^{i\theta}\in\mathcal{S}.
\end{equation}
Now, if $f$ is a function of $\mathcal{F}$ that vanishes on $\Lambda\setminus\{\lambda\}$, for some fixed $\lambda\in\Lambda$, then it can be written as $f=h \frac{G_\Lambda}{z-\lambda}$, for some entire function $h$. Since
\[
|h(z)|\frac{e^{\frac{\pi}{2}|z|^2}}{(1+|z|)^{1+\nu}}e^{-c_1\sin(\theta)|z|^{1-s}}\ \lesssim\ \left|h(z)\frac{G_\Lambda(z)}{z-\lambda}\right|\ \lesssim\ e^{\frac{\pi}{2}|z|^2},\quad z=|z|e^{i\theta}\in\mathcal{S},
\]
the function $h$ is a constant and the latter inequality becomes
\[
\frac{1}{(1+|z|)^{1+\nu}}e^{-c_1\sin(\theta)|z|^{1-s}}\ \lesssim\  1,\quad z=|z|e^{i\theta}\in\mathcal{S},
\]
which is not possible. This completes the proof of the proposition.
\end{proof}

\subsection{Relation between $\delta(\Lambda)$ and condition \eqref{(b)}}
\begin{lemma}\label{equiv}
Let $\left(\delta_n\right)_{n\geq 1}$ be a sequence of real numbers such that $\left(n\delta_n\right)_{n\geq 1}$ is bounded and let $\delta$ a positive real number.
Assume that there exists an integer $N\geq 1$ such that
\begin{eqnarray}\label{avdd}
\underset{n\geq 0}{\sup}\ \frac{n+1}{N} \left|\underset{k=n+1}{\overset{n+N}{\sum}}\delta_k\right|  < \delta.
\end{eqnarray}
Then {\small$\underset{n\rightarrow\infty}{\limsup}\,\ \frac{1}{\log n}\left|\underset{k\leq n}{\sum}\ \delta_{k}\right| < \delta.$ }The converse is not true.
\end{lemma}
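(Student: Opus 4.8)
The plan is to prove the two assertions separately: the implication \eqref{avdd}$\;\Rightarrow\;$(logarithmic bound), which is a routine block-summation, and then the failure of the converse, which requires a carefully chosen sequence. For the forward implication I would set
\[
\delta':=\sup_{n\geq 0}\frac{n+1}{N}\left|\sum_{k=n+1}^{n+N}\delta_k\right|,
\]
which by hypothesis satisfies $\delta'<\delta$, so that $\left|\sum_{k=jN+1}^{(j+1)N}\delta_k\right|\leq \frac{N\delta'}{jN+1}$ for every $j\geq 0$. The idea is then to split $\sum_{k=1}^{qN}\delta_k$ into $q$ consecutive blocks of length $N$ and estimate block by block. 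Using $\frac{N}{jN+1}\leq\frac1j$ for $j\geq 1$ and $\sum_{j=1}^{q-1}\frac1j=\log q+O(1)$, summing the block bounds gives
\[
\left|\sum_{k=1}^{qN}\delta_k\right|\ \leq\ \sum_{j=0}^{q-1}\frac{N\delta'}{jN+1}\ \leq\ N\delta'+\delta'\sum_{j=1}^{q-1}\frac1j\ =\ \delta'\log q+O(1).
\]

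To pass from $qN$ to an arbitrary $n$, write $n=qN+r$ with $0\leq r<N$; since $(n\delta_n)$ is bounded the $r<N$ leftover terms contribute only $O(N/n)=O(1)$, while $q=\lfloor n/N\rfloor$ gives $\log q=\log n+O(1)$. Hence $\left|\sum_{k=1}^{n}\delta_k\right|\leq\delta'\log n+O(1)$, and dividing by $\log n$ and taking $\limsup$ yields $\limsup_{n\to\infty}\frac{1}{\log n}\left|\sum_{k\leq n}\delta_k\right|\leq\delta'<\delta$. This half is pure bookkeeping and presents no difficulty.

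The delicate point, and the main obstacle, is the counterexample for the converse, since the obvious candidates fail: sparse spikes $\delta_k=1/k$ at $k=2^j$ (and $0$ elsewhere), or the alternating sequence $\delta_k=(-1)^k/k$, both actually \emph{satisfy} \eqref{avdd} once $N$ is large — in the first case a block of length $N$ captures a negligible portion of an isolated spike, and in the second the signs already cancel inside each block. The correct construction keeps a constant sign over long stretches, so no cancellation occurs inside a block, while alternating the sign between geometrically growing stretches so that the full partial sums cancel. Concretely I would take
\[
\delta_k=\frac{(-1)^j}{k}\qquad\text{for}\quad 2^j\leq k<2^{j+1},\ j\geq 0,
\]
so $(n\delta_n)$ is bounded. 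The partial sum over the complete $j$-th stretch equals $(-1)^j\bigl(\log 2+O(2^{-j})\bigr)$, and summing these alternating, geometrically convergent contributions shows $\sum_{k\leq n}\delta_k$ stays bounded; hence $\limsup_{n\to\infty}\frac{1}{\log n}\left|\sum_{k\leq n}\delta_k\right|=0<\delta$ for every $\delta>0$.

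Finally, to see that \eqref{avdd} fails for every $N$, fix $N$, choose $j$ with $2^j\geq N$, and take $n+1=2^j$; then the whole block $\{2^j,\dots,2^j+N-1\}$ lies inside the $j$-th stretch, where the $\delta_k$ share one sign, so
\[
\frac{n+1}{N}\left|\sum_{k=n+1}^{n+N}\delta_k\right|=\frac{2^j}{N}\sum_{k=2^j}^{2^j+N-1}\frac1k\ \geq\ \frac{2^j}{2^j+N-1},
\]
and the right-hand side tends to $1$ as $j\to\infty$. Thus $\sup_{n\geq 0}\frac{n+1}{N}\left|\sum_{k=n+1}^{n+N}\delta_k\right|\geq 1$ for every $N$, so for any fixed $\delta\in(0,1)$ the logarithmic bound holds while \eqref{avdd} holds for no $N$. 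This exhibits the failure of the converse.
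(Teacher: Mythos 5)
Your proposal is correct and takes essentially the same route as the paper: the identical block-decomposition with the harmonic-sum estimate $\sum_{j\le q}\frac{N}{jN+1}\asymp\log q$ for the forward implication, and a counterexample built on the same idea of a sign kept constant on dyadic stretches and alternating between stretches, so that partial sums stay bounded while no window of fixed length $N$ sees any cancellation. The only (cosmetic) difference is that the paper takes the piecewise-constant sequence $\delta_n=(-1)^k 2^{-k}$ for $2^k\le n<2^{k+1}$, whereas you take $\delta_n=(-1)^j/n$ on the same stretches; the two are interchangeable.
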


\begin{proof}
Suppose that \eqref{avdd} holds for some integer $N\geq 1$. Let $m$ be a large integer. There exists $(p,r)\in\mathbb{N}\times\{0,\ldots,N-1\}$ such that $m=pN+r$ and hence
{\small
\begin{align}
\left|\underset{k=1}{\overset{m}{\sum}}\ \delta_k\right|& \leq  \underset{j=0}{\overset{p-1}{\sum}}\frac{N}{jN+1}\frac{jN+1}{N}\left|\underset{k=jN+1}{\overset{(j+1)N}{\sum}}\delta_k\right|+\left|\underset{k=pN+1}{\overset{pN+r}{\sum}}\delta_k\right|\nonumber\\
& \leq \underset{j=0}{\overset{p-1}{\sum}}\frac{N}{jN+1}\ \underset{n\geq 0}{\sup}\ \frac{n+1}{N}\left|\underset{k=n+1}{\overset{n+N}{\sum}}\delta_k\right| +O(1) \nonumber\\
& =   \underset{n\geq 0}{\sup}\ \frac{n+1}{N} \left|\underset{k=n+1}{\overset{n+N}{\sum}}\ \delta_k\right|\ \log m + O(1).    \nonumber
\end{align}}
Thus,
\begin{eqnarray}\nonumber
\underset{m\rightarrow\infty}{\limsup}\ \frac{1}{\log m}\left|\underset{k=1}{\overset{m}{\sum}}\ \delta_k\right| & \leq & \underset{n\geq 0}{\sup}\ \frac{n+1}{N} \left|\underset{k=n+1}{\overset{n+N}{\sum}}\ \delta_k\right|.
\end{eqnarray}
To prove that the converse is not true, we consider the sequence $\left(\delta_n\right)$ defined as
\[\delta_n = \frac{(-1)^k}{2^k},\quad \mbox{if}\ \ 2^k\leq n< 2^{k+1}.\]
Let $N$ be a fixed integer.  By a simple calculation, we obtain
\begin{eqnarray}
\underset{m\rightarrow\infty}{\limsup}\ \frac{1}{\log m} \left|\sum_{k\leq m}\delta_k \right| = 0 < 1 \leq   \underset{n\geq 0}{\sup}\ \ \frac{n+1}{N}\left|\sum_{k=n+1}^{n+N} \delta_k \right|. \nonumber
\end{eqnarray}
This completes the proof of Lemma \ref{equiv}.
\end{proof}

\subsection*{Acknowledgments}
The author is deeply grateful to Alexander Borichev, Omar El-Fallah and Karim Kellay for their helpful discussions, suggestions and remarks. The author is also thankful to the referees for their helpful remarks and suggestions. This research is supported by CNRST Grant 84UM52016

\end{document}